\documentclass[a4paper,duplex]{article}

\usepackage{amsmath}
\usepackage{amssymb}
\usepackage{amsthm}
\usepackage{color}
\usepackage{epsfig}
\usepackage{algpseudocode}
\usepackage{cite}
\usepackage{txfonts}
\usepackage{enumerate}
\usepackage{supertabular}
\usepackage{epsf}

\newtheorem{defn}{Definition}[section]
\newtheorem{lemma}[defn]{Lemma}
\newtheorem{proposition}[defn]{Proposition}

\newtheorem*{theorema}{Theorem 1}
\newtheorem*{theoremb}{Theorem 2}

\newtheorem{corollary}[defn]{Corollary}

\theoremstyle{definition}

\newcommand{\slex}{\pi}
\newcommand{\evaluate}[1]{\tau(#1)}

\newcommand{\vertex}[1]{\mathsf{#1}}

\newcommand{\canc}[2]{\Delta(#1,#2)}

\newcommand{\floor}[1]{\left\lfloor #1 \right\rfloor}

\newcommand{\ceil}[1]{\left\lceil #1 \right\rceil}

\newcommand{\superscript}[1]{\ensuremath{^{\textrm{#1}}}}

\renewcommand{\th}[0]{\superscript{th}}

\setlength{\unitlength}{20mm}

\begin{document}

\title{The conjugacy problem in hyperbolic groups for finite lists of group
elements}

\author{D.J.\ Buckley \& D.F.\ Holt}

\maketitle

\begin{abstract}
Let $G$ be a word-hyperbolic group with given finite generating set,
for which various standard structures and constants have been pre-computed.
A (non-practical) algorithm is described that,
given as input two lists $A$ and $B$, each composed of $m$ words in the
generators and their inverses, determines whether or not the lists are
conjugate in $G$, and returns a conjugating element should one exist.
The algorithm runs in time $O(m\mu)$, where $\mu$ is an upper bound on
the lengths of elements in the two lists. Similarly, an algorithm is outlined
that computes generators of the centraliser of $A$,
with the same bound on running time.
\end{abstract}

\section{Introduction}

In \cite{bridson2005conjugacy}, Bridson and Howie give a solution of the conjugacy problem for finite lists $A = (a_1, \ldots, a_m)$ and $B = (b_1, \ldots, b_m)$ of elements in a word-hyperbolic group -- in fact, they prove that the problem is solvable in time $O(m \mu^2)$ for any fixed torsion-free word-hyperbolic group, where $\mu$ is an upper bound on the length of elements in the two lists.

The aim here is both to improve the bound on running time to $O(m\mu)$, and to tie up the rather limp conclusion in part 2 of Theorem B of \cite{bridson2005conjugacy}, in which their algorithm terminates
without giving any results on the conjugacy
when the lists consist entirely of elements of finite order.
The general algorithm for the conjugacy problem for finite lists described in
\cite{bridson2005conjugacy} is almost certainly at least exponential in the input length.

The ideas used here closely relate to those in \cite{epstein2006linearity}, in which Epstein and Holt show that the conjugacy problem for single elements in a word-hyperbolic group can be solved in linear time if one assumes a RAM model of computing. They do so by showing that infinite order elements tend to be well-behaved when raised to large powers, and finite order elements can be conjugated to elements of short length whose conjugacy can be precomputed.
In fact we will adapt and make use of a number of results from that paper.

The results in this paper are covered in more detail in
\cite{buckley2010thesis}. 
Our main theorem is:

\begin{theorema}
\label{thm:linear_conj}
Given a word-hyperbolic group $G= \langle X \mid R \rangle$,
there is an algorithm that, given a number $m \ge 1$ and lists
$A = (a_1, \ldots, a_m)$ and $B = (b_1, \ldots, b_m)$,
each containing words in $X \cup X^{-1}$, either finds an element $g \in G$
such that $A^g =_G B$ or determines that no such element exists.
The algorithm runs in time $O(m\mu)$, where $\mu$ is an upper bound on the
lengths of elements in the lists.
\end{theorema}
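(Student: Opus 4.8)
The plan is to reduce the list problem to a single-element conjugacy computation together with a bounded search inside a virtually cyclic centraliser, relying throughout on the linear-time single-element machinery of \cite{epstein2006linearity}. First I would preprocess each entry: using the shortlex and cyclic-reduction apparatus of \cite{epstein2006linearity}, decide in time $O(\mu)$ for every $a_i$ and $b_i$ whether it has finite or infinite order and compute a cyclically reduced representative. Since $A^g =_G B$ forces $a_i$ to be conjugate to $b_i$ for every $i$, I would check this elementwise; this costs $O(m\mu)$ and, if it fails for some $i$, we may report that no conjugator exists.

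The main case is when some $a_i$, say $a_1$, has infinite order. I would first compute a single conjugator $g_1$ with $a_1^{g_1} =_G b_1$ in time $O(\mu)$ and then conjugate the whole list, replacing $A$ by $A^{g_1}$; after this $a_1 =_G b_1$, so any conjugator carrying the new list to $B$ must lie in the centraliser $C(b_1)$. The crucial structural fact is that in a hyperbolic group the centraliser of an infinite-order element is virtually cyclic, and generators of $C(b_1)$ (essentially a primitive root of $b_1$ together with a bounded finite part) can be extracted in time $O(\mu)$ from the quasi-geodesic axis data of \cite{epstein2006linearity}. For each remaining index $i$, the set of $c \in C(b_1)$ with $a_i^{c} =_G b_i$ is either empty or a coset of $C(b_1) \cap C(a_i)$; because $C(b_1)$ is virtually cyclic this coset is described by a bounded amount of data, and the power estimates of \cite{epstein2006linearity} are decisive here: conjugating $a_i$ by the $n$-th power of the generator of the cyclic part changes its shortlex length essentially linearly in $n$ unless $a_i$ already translates along the same axis, so the set of admissible $n$ is either bounded (and directly checkable) or a full arithmetic progression. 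Intersecting these solution sets over all $i$ either exhibits a conjugator or proves that none exists, and the same data yield generators of the centraliser of the list.

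It remains to treat the case in which every $a_i$, and hence necessarily every $b_i$, has finite order; this is precisely the situation left unresolved in part 2 of Theorem B of \cite{bridson2005conjugacy}. Here the subgroup $H = \langle a_1, \ldots, a_m \rangle$ is either finite or infinite. If $H$ is infinite then, since an infinite subgroup of a hyperbolic group must contain an element of infinite order, some short word $w$ in the $a_i$ evaluates to an infinite-order element; as $A^g =_G B$ forces $w(A)^g =_G w(B)$, adjoining this element returns us to the main case above. If $H$ is finite then it is conjugate into one of the finitely many precomputed finite subgroups of $G$, and both the list conjugacy and the centraliser can be read off from precomputed conjugacy data for those finite subgroups.

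The hard part will be this finite-order case: both certifying the finite/infinite dichotomy for $H$ and, when $H$ is infinite, locating an infinite-order word $w$ of controlled length, all within the overall budget $O(m\mu)$ rather than $O(m\mu^2)$. I expect this to demand a careful choice of candidate products, for instance scanning $a_1, a_1 a_2, \ldots$ and bounded products of pairs, combined with the power estimates of \cite{epstein2006linearity} to recognise infinite order cheaply, so that the anchoring element is produced and its axis data computed within the linear bound.
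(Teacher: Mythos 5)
Your handling of the case where some list entry has infinite order is essentially the paper's argument in different packaging: anchor on a single-element conjugator for $a_1,b_1$, describe all candidate conjugators through the virtually cyclic centraliser of a shortlex straight root (the paper's Proposition~\ref{prop:conj_candidates}, where conjugators have the form $ay^ns$ with $s$ in a set of at most $V$ words), then for each index $i$ decide whether the admissible powers of $y$ form an arithmetic progression, a single value pinned down by the linear growth of $|a_i^{y^n}|_G$, or the empty set (Proposition~\ref{prop:test_conj_vs_sls}), and intersect (the paper uses the Chinese Remainder Theorem). That half is sound and is not where the difficulty of the theorem lies.

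The genuine gap is the all-torsion case, which you pose but do not solve --- and your own closing paragraph concedes this. Two concrete failures. First, your dichotomy ``$H=\langle a_1,\ldots,a_m\rangle$ finite or infinite'' cannot be certified by the scan you propose: in the $(2,3,7)$-triangle group (hyperbolic and infinite) the list $(a,b)$ with $a^2=b^3=(ab)^7=1$ consists of torsion elements, and $a$, $b$, $ab$, $ba$ are all torsion, so scanning prefix products and bounded products of pairs finds no infinite-order witness even though $H$ is infinite; you give no bound on how long a word in the $a_i$ must be taken to find one. The paper's design shows this dichotomy is not needed at all: by Lemma~\ref{lemma:replace} one passes to the suffix products $a_i\cdots a_m$, \textsc{EnsureDistinct} makes the first $n:=V^4+1$ of them pairwise distinct in both lists, and \textsc{ShortenWords} (Proposition~\ref{prop:shorten_list}), applied \emph{only to the truncated lists}, either certifies some consecutive product $a_j\cdots a_k$ has infinite order via the criterion $|\slex(w_C)|_G>2L$ (returning to the first case), or builds a conjugator making every such product short --- regardless of whether $H$ is finite. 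Second, even granting ``conjugate a finite $H$ into a precomputed finite subgroup,'' you give no way to find that conjugating element within the time bound; this is exactly what \textsc{ShortenWords} supplies, and its length bound $3^{k-i}\left(7L+\delta+\tfrac{1}{2}\right)$ grows exponentially in the list length, which is precisely why the truncation to $V^4+1$ elements is essential: by Proposition~\ref{proposition:small_cent} a list of more than $V^4$ distinct torsion elements has finite centraliser of bounded order (Corollary~\ref{cor:short_cent}), so the exponential-time algorithms of Bridson--Howie, now running on inputs of bounded size, yield a \emph{bounded} set of candidate conjugators of the truncated lists, each of which is then tested against the remaining $m-n$ entries in time $O(m\mu)$. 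Without the distinctness step, the truncation, and this finite-centraliser argument, your torsion case does not assemble into an algorithm, let alone one running in $O(m\mu)$.
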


Due to the exhaustive search required to verify that two lists are not
conjugate, the method will in fact enable the computation of all conjugating
elements -- in particular, a simple modification yields the following
additional result.

\begin{theoremb}
\label{thm:linear_cent}
Given a word-hyperbolic group $G= \langle X \mid R \rangle$,
there is an algorithm that, given a number $m \ge 1$ and a list $A = (a_1, \ldots, a_m)$ containing words in $X \cup X^{-1}$, returns a generating set for
the centraliser $C_G(A)$.
The algorithm runs in time $O(m\mu)$, where $\mu$ is an upper bound on the
lengths of elements in the list.
\end{theoremb}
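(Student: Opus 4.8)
The plan is to derive Theorem 2 (the centraliser computation) as a modification of Theorem 1 (the conjugacy algorithm), exactly as the text promises ("a simple modification yields the following additional result"). The key observation is that the centraliser $C_G(A)$ is precisely the set of conjugating elements $g$ with $A^g =_G A$, i.e. the solution set of the conjugacy problem with $B = A$. So I would run the Theorem 1 machinery with both input lists equal to $A$, but instead of halting as soon as one conjugating element is found, I would arrange for the algorithm to describe the \emph{entire} set of such elements. The remark immediately preceding the statement is the crucial hint here: because verifying non-conjugacy requires an exhaustive search, the method already implicitly examines all candidate conjugators, so the set of all conjugating elements is available essentially for free.

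Let me sketch the steps. First, I would recall the structure of the conjugacy solution: typically one reduces the list to a canonical (e.g.\ shortlex-minimal or quasigeodesic) form, and then conjugators are shown to factor through a boundedly-generated object — either the centraliser of the reduced list is finite (the torsion / bounded case) or it is controlled by a cyclic subgroup coming from an infinite-order element (the well-behaved-under-large-powers case described via \cite{epstein2006linearity}). Second, I would argue that the set $S = \{g : A^g =_G A\}$ forms a subgroup (it is the centraliser), so producing a generating set amounts to identifying finitely many coset representatives plus the generators of the controlling cyclic part. Concretely: if $A$ conjugates to a reduced list $A'$ via some fixed $u$ (so $A' = A^u$), then $C_G(A) = u\, C_G(A')\, u^{-1}$, and it suffices to compute $C_G(A')$ for the reduced list. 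Third, for the reduced list one distinguishes the infinite-order case, where $C_G(A')$ is virtually cyclic and a generator of the cyclic part together with finitely many torsion-coset representatives suffices, from the finite-order case, where $C_G(A')$ is finite and can be enumerated directly (this is exactly the case that Theorem B of \cite{bridson2005conjugacy} left unresolved, and which the present paper repairs). Collecting these generators and conjugating back by $u$ gives a generating set for $C_G(A)$.

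The main obstacle I anticipate is \textbf{not} the group theory but the bookkeeping needed to keep the running time at $O(m\mu)$. Producing a conjugating element once is cheap, but producing generators for the whole centraliser requires that the number of generators returned, and the cost of computing each, both stay within the linear budget: I must ensure the conjugating element $u$ has length $O(\mu)$, that the cyclic generator (a suitable root or power of an infinite-order element in $A'$) can be extracted in $O(m\mu)$ time, and that the finite coset/torsion part contributes only $O(1)$ generators (depending on precomputed constants of $G$, not on the input). The finite-order case is where I would expect the real work: I would handle it by the precomputation strategy of \cite{epstein2006linearity}, conjugating the finite-order elements to short representatives whose centralisers have been tabulated in advance, so that the per-element cost is again linear and the total is $O(m\mu)$.

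Finally, I would verify correctness by the standard double-inclusion: every returned generator centralises $A$ by construction, and conversely any $g$ with $A^g =_G A$ is captured because the exhaustive conjugacy search of Theorem 1 — now reused verbatim with $B = A$ — enumerates all conjugators up to the controlling subgroup, so nothing in $C_G(A)$ is missed.
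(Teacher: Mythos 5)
Your overall frame matches the paper: set $B=A$, rerun the conjugacy machinery so that it describes \emph{all} conjugating elements rather than halting at the first, and use $C_G(A)=u\,C_G(A')\,u^{-1}$ to pass to a reduced list. Your infinite-order sketch is also essentially the paper's: there, every conjugator has the form $py^{R_s+nT_s}s$, and the generating set $\{\,py^{R_s}s : s\in S\,\}\cup\{\,py^{T}p^{-1}\,\}$ is extracted after showing all nonzero $T_s$ coincide — your ``cyclic part plus finitely many coset representatives''.

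The genuine gap is in your torsion case. You assert that when the reduced list consists of finite-order elements, $C_G(A')$ ``is finite and can be enumerated directly''. That is false in general: in a hyperbolic group the centraliser of a torsion element, or of a short list of torsion elements, can be infinite — for instance a central involution in the (virtually free, hence hyperbolic) group $F_2\times\mathbb{Z}/2$ is centralised by the whole group. Finiteness holds only for lists of more than $V^4$ \emph{pairwise distinct} torsion elements (Proposition \ref{proposition:small_cent} together with Proposition \ref{proposition:finite_small_ball}), and exploiting this requires machinery you never mention: the list must first be replaced by its suffix products (Lemma \ref{lemma:replace}) and passed through \textsc{EnsureDistinct} so that distinctness can be assumed; then \textsc{ShortenWords} must produce a \emph{single} conjugator shortening the first $n=V^4+1$ products simultaneously — its bound $3^{m-i}\left(7L+\delta+\frac{1}{2}\right)$ grows exponentially in the list length, which is precisely why only a constant-size sublist can be shortened; finally the paper splits cases: if $m>n$, the sublist's centraliser is finite of bounded order (Corollary \ref{cor:short_cent}), and its $O(1)$ elements are filtered against the remaining entries of $A$ in time $O(m\mu)$, whereas if $m\le n$ the centraliser may well be infinite and the paper must fall back on the Gersten--Short quasiconvexity machinery (\textsc{FindCentraliserGenerators}), affordable only because the input now has constant length. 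Your proposed alternative — conjugating each torsion element separately to a tabulated short representative as in \cite{epstein2006linearity} — does not repair this: the elements are shortened by \emph{different} conjugators, so the centraliser of the list becomes an intersection $\bigcap_i g_iC_ig_i^{-1}$ of conjugates of possibly infinite tabulated subgroups, and you have no linear-time (or indeed any stated) procedure for computing such an intersection or a generating set of it.
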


As in~\cite{epstein2006linearity}, our complexity estimates are based on a RAM
model of computing, in which the basic arithmetical operations on integers
are assumed to take constant time. An alternative model with the same
complexity involves Turing machines that have multiple tapes, and may have
multiple heads on each tape (both the number of tapes and the number of heads will be in $O(1)$). The heads are independent; that is, while they all start in the same place, they need not behave in the same way: one may be moved and used to read and write on the tape while another remains stationary and later moves to read said area of tape. This model is described in\cite{galil1976real}.



Throughout this paper, we assume that $G$ is a fixed word-hyperbolic group with
fixed generating set $X$, where we assume for convenience that $X=X^{-1}$.
The pre-computations that we need to carry out in $G$ will be summarised in Section~\ref{section:notation}. All of the constants
referred to explicitly or implicitly will depend on $G$ and $X$ only. 

The technicalities behind the proof in the case where one element, say $a_1$, has infinite order are largely covered by solving the conjugacy problem $a_1^h =_G b_1$ for $h$ as in \cite{epstein2006linearity}. In the process of doing so, a useful description of elements of the centraliser $C$ of $a_1$ is found, and then used to test if $A^{ch} =_G B$ for some $c \in C$. Of course $C$ is infinite, so it is important to perform this test efficiently. Section \ref{section:inf_order} describes a way of doing so.

These methods cannot be used when both lists consist entirely of torsion
elements.
It is, however, possible to show that, if $A$ and $B$ have length $m$,
then a pair of lists $A'$ and $B'$ can be efficiently found such that $A^h =_G B$ if and only if $A'^h =_G B'$, and such that either $A'$ or $B'$ contains an
infinite order element, or each element in $A'$ and $B'$ has length at most
a number $L(m)$.  

The number $L(m)$ grows exponentially with $m$.
However, it can be shown that there is a constant $n$ such that, if the lists
consist of distinct torsion elements and have length at least $n$,
then their centralisers are finite and of bounded order. In particular, there are only a bounded number of elements that can simultaneously conjugate the
first $n$ elements of $A$ to the the first $n$ elements of $B$, and so testing each of these conjugating elements on the remainder of the elements in $A$ and $B$ completes the procedure.
Since $L(n)$ is a constant, we can use the general algorithm given in \cite{bridson2005conjugacy} to find these conjugating elements in constant time.

\section{Notation}

\label{section:notation}

We shall occasionally use the notation $x =^d y$ to mean $|x-y| \le d$.

A very brief introduction to hyperbolicity and some definitions included for
convenience are sketched below. The reader is referred to
\cite{alonso1990notes} for a more detailed introductory treatment of
the theory of (word-)hyperbolic groups.

A \textit{path} $\alpha : \path{a}{b} \to S$ is an arc-length parametrization of a connected curve in a metric space $S$. If $\alpha$ is described as connecting a point $x$ to a point $y$ then $\alpha(a) = x$ and $\alpha(b) = y$; it will normally be assumed that $a=0$ in this case. If $x = \alpha(t)$ for some $t \in \path{a}{b}$ then write $x \in \alpha$. If $x = \alpha(c)$ and $y = \alpha(d)$ for $c, d \in \path{a}{b}$, write $\path{x}{y}$ to denote the restriction $\alpha|_{\path{c}{d}}$ and then define $d_\alpha(x, y) = d - c$. This definition is a little loose where $\alpha$ is not a simple curve; in order to deal with this ambiguity assume that whenever a point $x \in \alpha$ is picked, a specific value $t_x \in \path{a}{b}$ with $\alpha(t_x) = x$ is also picked for use with these definitions.

A path $\alpha$ is \textit{$(\lambda, \epsilon)$-quasigeodesic} (with
$\lambda \ge 1$, $\epsilon \ge 0$) if $d_\alpha(x, y) \le \lambda d(x, y) + \epsilon$ for all $x, y \in \alpha$.
For $L>0$, the path is \textit{$L$-local $(\lambda, \epsilon)$-quasigeodesic},
if all subpaths of length at most $L$ are $(\lambda, \epsilon)$-quasigeodesic.
It is \textit{geodesic} if it is $(1, 0)$-quasigeodesic. A \textit{geodesic metric space} is a metric space in which each pair of points is connected by a geodesic.
A \textit{geodesic triangle} in a metric space is a collection of three points (the corners) along with three geodesic paths, one path connecting each pair of corners.

The Gromov inner product of points $x$ and $y$ at a point $z$ in a metric space is defined as \[ (x, y)_z := \frac{d(x, z) + d(y, z) - d(x, y)}{2}. \]

Suppose that $x, y, z$ are points in a geodesic metric space $\Gamma$ and that $\alpha$ and $\beta$ are sides of a geodesic triangle connecting these three points, chosen so that $\alpha(0) = \beta(0) = z$. If $0 \le t \le (x, y)_z$ then the points $\alpha(t)$ and $\beta(t)$ are said to \textit{correspond}.
By making the corresponding definition at the
remaining two corners, each point on the sides of the triangle has a corresponding point on at least one other side (though in degenerate cases, for example when $t=0$, a point may correspond to itself). The triangle is \textit{$\delta$-thin} if $d(r, s) \le \delta$ whenever $r$ and $s$ are corresponding points. A geodesic metric space $\Gamma$ is \textit{$\delta$-hyperbolic} if all geodesic triangles in $\Gamma$ are $\delta$-thin.


Given a group $G$ with generating set $X$, the Cayley graph $\Gamma$ of $G$ is the graph with vertex set $G$ and edges connecting $g$ to $gx$ whenever $g \in G$ and $x \in X$, endowed with the metric that sets each edge to have length 1 (often called the ``word metric''). A \textit{word-hyperbolic group} is a finitely generated group in which all geodesic triangles in its Cayley graph are $\delta$-thin for some fixed $\delta \ge 0$.
It turns out that the property of being word-hyperbolic is independent of generating set, though the value of $\delta$ is not; see \cite{alonso1990notes}.

Throughout this paper, we assume that an ambient finitely generated group $G$ has been fixed along with a finite inverse-closed generating set $X$, and that $G$ is $\delta$-hyperbolic for some $\delta$ with respect to this generating set.
For our later convenience, we assume that $\delta \ge 1$.
Where a value is said to be ``bounded'' or ``in $O(1)$'', the value is bounded above by some constant that depends only on $G$ and $X$.

All geometric constructions occur inside the Cayley graph $\Gamma$ of $G$ with respect to $X$, inside which the vertex $\vertex{1}$ represents the identity element of $G$.

A \textit{word} is a finite sequence of elements of $X$, written as a concatenation.
The {\em length} $|w|$ of a word $w$ is the length of the sequence of generators
that defined $w$.
For each $1 \le a \le |w|$, denote the $a$\th{} letter of $w$ by $w[a]$. For each $0 \le a \le |w|$, write $w(a) = w[1] w[2] \cdots w[a]$ to refer to the subword given by the first $a$ letters of $w$ and let $w(a:b) = w[a+1] w[a+2] \cdots w[b]$ so that $w(b) = w(a) w(a:b)$ whenever $0 \le a \le b \le |w|$.

One operation that we shall use frequently is the {\em half-cyclic conjugate} of a word.
Given a word $w = a_1 \cdots a_n$, let $l := \floor{\frac{n}{2}}$, let $w_L := w(l)$ and $w_R := w(l:n)$.
Then the half-cyclic conjugate is defined as $w_C := w_R w_L$. For example, if $w = abcde$ then $w_C = cdeab$.

Given a starting vertex in $\Gamma$, a word $w$ uniquely labels a path in $\Gamma$. By taking $\vertex{1}$ as the starting vertex, each word defines an element $\evaluate{w}$ of the group.
If two words $u$ and $v$ map to the same element of $G$, write $u =_G v$. The length of an element $g \in G$, written $|g|_G$, is the minimum length of a word that defines $g$ and, for a word $w$, we define $|w|_G := |\tau(w)|_G$.
A word is \textit{geodesic} if $|w| = |w|_G$, that is, $w$ is a shortest representative of $\tau(w)$.

The generating set $X$ is assumed to be ordered, so that the notion of the shortlex least representative word $\slex(w)$ for each group element $g=\tau(w)$
exists (that is, the lexicographically least word among
all geodesic words that define $g$).
A word $w$ is said to be \textit{shortlex reduced} if $\slex(w)=w$.
A \textit{straight} word $w$ is one for which $|w^n|_G = |w^n|$ for any positive integer $n$.
Similarly, a \textit{shortlex straight} word is one for which $w^n$ is shortlex reduced for any positive integer $n$.

In \cite{epstein2006linearity}, the following result due to Shapiro is proved:

\begin{lemma}
\label{lemma:short_lex}
There is an algorithm that, given a word $w$, returns $\slex(w)$ in time
$O(|w|)$.
\end{lemma}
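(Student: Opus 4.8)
The plan is to process $w$ from left to right, maintaining at each stage the shortlex normal form of the prefix read so far, and to show that reading one further letter changes this normal form only in a bounded suffix. Concretely, write $w = w[1]\cdots w[n]$ with $n = |w|$, and for $0 \le a \le n$ set $v_a := \slex(w(a))$, so that $v_0$ is empty and $v_n = \slex(w)$. Since $w(a+1) =_G w(a)\, w[a+1]$, we have $v_{a+1} = \slex\big(v_a\, w[a+1]\big)$, so it suffices to compute, for a shortlex reduced word $u$ and a generator $x \in X$, the word $\slex(ux)$, and to do so by altering only a bounded-length suffix of $u$.

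The key structural fact is that there is a constant $N$, depending only on $G$ and $X$, such that for every shortlex reduced $u$ and every $x \in X$ the words $u$ and $\slex(ux)$ share a common prefix of length at least $\min(|u|, |\slex(ux)|) - N$. I would establish this from hyperbolicity: the geodesic words $u$ and $\slex(ux)$ end at the adjacent vertices $\tau(u)$ and $\tau(u)x$, so the geodesic triangle on $\vertex{1}$, $\tau(u)$, $\tau(u)x$ is $\delta$-thin and its two long sides $K$-fellow travel, with $K$ bounded in terms of $\delta$. Combining this fellow-traveller bound with the fact that prefixes of shortlex words are themselves shortlex (replacing a prefix by a lexicographically smaller geodesic word for the same element would contradict the minimality of $u$) and with the uniqueness of shortlex representatives forces the point of divergence of $u$ and $\slex(ux)$ to lie within a bounded distance $N$ of their ends. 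The resulting transformation on bounded-length suffixes, together with the finitely many relevant states of the shortlex word-acceptor, can then be precomputed as a finite lookup table; this is where the precomputations for $G$ and $X$ are used.

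The algorithm itself reads $w$ once, maintaining the current normal form $v_a$ on a work tape together with, at each position, the state of the word-acceptor automaton reached there (so that after a suffix is deleted the correct state at the new end is available). On reading $w[a+1]$, it consults the lookup table on the last $N$ letters of $v_a$ and the stored automaton states, deletes a bounded suffix, and appends the replacement suffix to obtain $v_{a+1}$. Each such step performs $O(1)$ table work and modifies a suffix of bounded length. For the overall bound, note that $|v_{a+1}| \le |v_a| + 1$ while each step shortens $v$ by at most $N$; a standard amortised (potential-function) argument with potential $|v_a|$ then shows that the total length of all suffix insertions and deletions over the whole run is $O(n)$, so the algorithm runs in time $O(n) = O(|w|)$. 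The ability to move a head back to a deleted position and resume appending is exactly what the multi-head tape model of Section~\ref{section:notation} provides (and is immediate in the RAM model).

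The main obstacle is the bounded-suffix-change property, and in particular upgrading the geometric fellow-traveller bound --- which only asserts that $u$ and $\slex(ux)$ stay uniformly close --- to the stronger conclusion that they actually coincide on a long common prefix; this is where the uniqueness and lexicographic minimality of the shortlex form, rather than mere geodesity, are essential, since uniformly close geodesics need not coincide anywhere. The second point requiring care is the amortised analysis: a naive recomputation of the normal form after each letter would cost $O(n)$ per letter and yield only the quadratic bound, so the argument must charge deletions against earlier insertions to keep the total work linear.
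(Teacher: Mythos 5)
Your proof rests on a ``key structural fact'' that is false in general: there need not be any constant $N$ such that a shortlex reduced word $u$ and $\slex(ux)$ share a common prefix of length $\min(|u|,|\slex(ux)|)-N$. Concretely, let $G = F_2 \times \mathbb{Z}/2 = \langle a,b\rangle \times \langle z \mid z^2\rangle$ with the inverse-closed generating set $X=\{a,a^{-1},b,b^{-1},z\}$ and an ordering in which $z$ is the \emph{least} generator. This group is word-hyperbolic, since it contains $F_2$ with index $2$. For a reduced word $w$ in $a^{\pm1},b^{\pm1}$, the geodesic words representing the element $wz$ are exactly the $|w|+1$ words obtained by inserting a single letter $z$ somewhere into $w$ (the letter $z$ is central and has order $2$), so, since $z$ is the smallest letter, $\slex(wz)=zw$; on the other hand the unique geodesic word representing $w$ is $w$ itself, so $\slex(w)=w$. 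Taking $u=w$ and $x=z$ gives $\slex(ux)=zw$, which shares \emph{no} common prefix with $u$, although both words have length about $|w|$; no constant $N$ can work. (With the ordering $a<a^{-1}<z<b<b^{-1}$ one gets $\slex(a^mb^nz)=a^mzb^n$, so the update is not even a replacement of a suffix: a letter must be inserted in the middle of the word, with unboundedly much retained on both sides.) Your sketched justification is vague exactly where it fails: $\delta$-thinness lets you splice a prefix of one geodesic onto the tail of the other only at the cost of up to $\delta$ extra letters, and since shortlex compares length before comparing lexicographically, this yields no contradiction with minimality; moreover, lexicographic minimality among representatives of $\tau(u)$ and among representatives of $\tau(u)x$ are conditions on two \emph{different} group elements and cannot be played off against one another in the way your argument needs. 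You correctly flagged this as the main obstacle, but the obstacle is genuine rather than technical: uniformly close shortlex normal forms really can fail to coincide anywhere.

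Because the invariant your algorithm maintains cannot be restored by bounded suffix edits, the algorithm is incorrect in any machine model (this is not a RAM-versus-Turing-machine issue), and the amortised analysis is moot. For comparison with the paper: the paper gives no proof of this lemma at all; it quotes it from \cite{epstein2006linearity}, where the result is attributed to Shapiro, and the argument there is considerably more delicate precisely because it must cope with the phenomenon above --- multiplying a shortlex normal form by one generator can force edits far from the end of the word (the migrating letter $z$ in the example), so a correct linear-time algorithm cannot work by recomputing a bounded suffix per input letter.
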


This algorithm enables a number of other operations to be computed in linear
time; for example, testing  equality of words in $G$, and whether a given word
represents the identity.

In order to use the results from \cite{epstein2006linearity}, it is assumed
that various constructions related to the group (such as the shortlex word
acceptor) have been pre-computed.
The constants defined below, which are bounded in terms of $\delta$ and $|X|$,
will be used throughout the paper.

\begin{itemize}
\item $L := 34\delta + 2$
\item $V$, the number of vertices in the closed $2\delta$-ball around $\vertex{1}$ (so $|V| \le |X|^{1+2\delta}$).
\item $M := 20\delta^2V^3L^2$
\end{itemize}

\section{The infinite order case}

\label{section:inf_order}

We shall say that a word $w$ has infinite order if the element $\tau(w)$ in
$G$ that it represents has infinite order.
Recall that we are given two lists of words $A = (a_1, \ldots, a_m)$ and
$B = (b_1, \ldots, b_m)$ that we wish to test for conjugacy in $G$.
The aim of the section is to prove Theorems 1 and 2 under the additional
assumption that $a_1$ has infinite order.

The method is a combination of those described in \cite{epstein2006linearity} and \cite{bridson2005conjugacy}. The following three subsections concern testing conjugacy between single elements only; Section \ref{section:holt} is just a summary of some of \cite{epstein2006linearity}. The motivation here is to apply these methods to $a_1$ and $b_1$, since any element conjugating $A$ to $B$ must necessarily conjugate $a_1$ to $b_1$.

\subsection{Results from \cite{epstein2006linearity}}

\label{section:holt}

It is proved in \cite[Section 3]{epstein2006linearity}
that the conjugacy problem for single elements is solvable in time linear in the total input length. The proof has several steps. The first few will be followed here as well; they are outlined in this subsection.

The authors of \cite{epstein2006linearity} first show that elements that are ``difficult to shorten'' are actually of infinite order, and behave nicely when raised to large powers.

\begin{proposition} \cite[Lemma 3.1]{epstein2006linearity}
\label{prop:short_conj}
Let $w$ be a shortlex reduced word and let $u = \slex(w_C)$. If $|u|>2L$,
then all positive powers of $u$ label $L$-local $(1, 2\delta)$-quasigeodesics.
\end{proposition}

\begin{proposition}\cite[Proposition 2.3]{epstein2006linearity}
\label{prop:nice_qgeo}
If $w$ is an $L$-local $(1, 2\delta)$-quasigeodesic path in $\Gamma$, and $u$ is
a geodesic path connecting its endpoints, then every point on $w$ is within $4\delta$ of a point on $u$, and every point on $u$ is within $4\delta$ of a point on $w$.
Also, if $|w| \ge L$ then $|u| \ge \frac{7|w|}{17}$.
\end{proposition}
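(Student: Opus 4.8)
The plan is to establish the statement in two halves: a mutual $4\delta$ fellow-travelling between $w$ and $u$, and then the length comparison $|u|\ge\tfrac{7}{17}|w|$. Everything rests on one reduction together with thin triangles. By hypothesis any subpath of $w$ of $w$-length at most $L$ is a genuine $(1,2\delta)$-quasigeodesic, so for points $x,y\in w$ with $d_w(x,y)\le L$ we have the two-sided estimate $d_w(x,y)-2\delta\le d(x,y)\le d_w(x,y)$. I would first record the elementary ``mini-Morse'' fact that a genuine $(1,2\delta)$-quasigeodesic lies within a small explicit multiple of $\delta$ of a geodesic joining its endpoints, proved by the standard farthest-point argument and using that geodesic quadrilaterals are $2\delta$-thin (each splits into two $\delta$-thin triangles). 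This already settles the fellow-travelling when $|w|<L$, since then $w$ is itself a genuine quasigeodesic, and it serves as a building block in the general case.

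For $|w|\ge L$ the forward inclusion $w\subseteq N_{4\delta}(u)$ is a farthest-point argument carried out on $w$. Let $p$ realise $D:=\max d(\,\cdot\,,u)$ over points of $w$ and suppose $D>4\delta$. Choose points $a,b$ of $w$ at $w$-arclength $\tfrac{L}{2}$ on either side of $p$, replacing $a$ or $b$ by the relevant endpoint of $w$ (which lies on $u$) if an end is reached. Then the subpath $w[a,b]$ has length at most $L$, hence is a genuine quasigeodesic and runs $O(\delta)$-close to a geodesic from $a$ to $b$ that passes near $p$. The quasigeodesic inequality forces $d(a,b)$ to be almost $L$, whereas any route from $a$ to $b$ through the nearest-point projections $a^{*},b^{*}\in u$ has length at most $2D+d_u(a^{*},b^{*})$; comparing these shows $a^{*}$ and $b^{*}$ must be far apart on $u$. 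In a $2\delta$-thin quadrilateral with one long side on $u$, the opposite side hugs $u$ away from its corners, so the middle region of the geodesic from $a$ to $b$, which contains a point close to $p$, lies within $2\delta$ of $u$. Feeding $L=34\delta+2$ through these inequalities contradicts $D>4\delta$. I expect this constant bookkeeping to be the main obstacle: one must simultaneously keep $\tfrac{L}{2}$ below the quasigeodesic scale, keep $p$ inside the ``hugging'' middle of the quadrilateral (equivalently rule out large $D$), and extract exactly $4\delta$ rather than some larger multiple of $\delta$.

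Granting forward fellow-travelling, the length bound follows by sampling. Let $p_i$ be the point of $w$ at arclength $iL$ for $0\le i\le k=\floor{|w|/L}$; consecutive samples satisfy $d(p_i,p_{i+1})\ge L-2\delta$, and since projecting each to $u$ moves it by at most $4\delta$ we get $d(p_i^{*},p_{i+1}^{*})\ge L-10\delta$. The one genuinely geometric input is that the projections $p_i^{*}$ advance coarsely monotonically along $u$, because nearest-point projection to a geodesic backtracks by only a bounded amount along a quasigeodesic; granting this, the consecutive separations accumulate into $|u|$ up to bounded overlap, so $|u|\ge k(L-10\delta)-O(\delta)$. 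Dividing by $|w|<(k+1)L$ and discarding the single leftover arc of length below $L$ leaves an explicit ratio, and the constants $L=34\delta+2$ and the $10\delta$ loss are arranged so that this accounting yields at least $\tfrac{7}{17}|w|$, the bound being safe even at the extreme $|w|=L$.

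Finally, $u\subseteq N_{4\delta}(w)$ is the mirror farthest-point argument, now maximising $d(\,\cdot\,,w)$ over the points of $u$: one surrounds the farthest point of $u$ by nearby points that are shadowed on $w$, forms the analogous $2\delta$-thin quadrilateral, and invokes the length bound to guarantee that the shadowing arc of $w$ is long enough to bracket the excursion while still having length at most $L$, so that it is a genuine quasigeodesic. The same bookkeeping as in the forward direction then returns the bound $4\delta$, completing both assertions.
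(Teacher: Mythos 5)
Note first that this paper contains no proof of Proposition~\ref{prop:nice_qgeo}: it is imported verbatim from \cite[Proposition 2.3]{epstein2006linearity}, so your argument can only be judged on its own terms rather than against an in-paper proof. On those terms, the forward inclusion is the solid part: using the sharp form of your ``mini-Morse'' fact (for a genuine $(1,2\delta)$-quasigeodesic with endpoints $a,b$, every point $p$ satisfies $(a,b)_p\le\delta$, hence lies within $2\delta$ of a geodesic $\path{a}{b}$), your farthest-point/quadrilateral bookkeeping does close at exactly $4\delta$, with room to spare when $L=34\delta+2$. The first genuine gap is in the length bound: the step you introduce with ``granting this'' --- that the projections $p_i^{*}$ advance coarsely monotonically along $u$ --- is not a side condition but the entire local-to-global content of the statement. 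It amounts to showing that $w$ makes linear progress, requires its own induction (a chain-of-points-with-small-Gromov-products lemma or equivalent) with explicit constants, and nothing in your sketch supplies it. Separately, even granting monotonicity, your accounting fails as stated: if you genuinely discard the leftover arc, then for $|w|$ just below $2L$ you get only $|u| \ge L-10\delta = 24\delta+2$ against $|w|\approx 68\delta+4$, and $(12\delta+1)/(34\delta+2) < 7/17$ for every $\delta\ge 1$. The bound is rescued only by \emph{using} the tail segment, whose far endpoint lies on $u$, not by discarding it.

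The second genuine gap is the reverse inclusion, which is not a ``mirror'' of the forward one, because the two sides play asymmetric roles: $u$ is a geodesic but $w$ is not. The quadrilateral you propose to build around the farthest point of $u$ has one side an arc of $w$, to which thinness does not apply. Replacing that arc by the geodesic joining its endpoints requires knowing that this geodesic stays close to $w$ --- which is precisely the reverse inclusion for a subpath, i.e.\ circular. Your proposed repair (invoking the length bound to force the shadowing arc to have length at most $L$, hence be a genuine quasigeodesic) does not close numerically: even after bootstrapping a weak reverse bound of $8\delta$ from the standard connectedness argument, the arc length is controlled only by roughly $\tfrac{17}{7}(2\cdot 8\delta + 2r)$ with $r>4\delta$, which is about $58\delta$ and exceeds $L=34\delta+2$. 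The connectedness argument itself (split $u$ at the point in question and cover the connected path $w$ by the closed $4\delta$-neighbourhoods of the two halves of $u$) yields $8\delta$, and refinements of it yield about $5\delta$, but nothing in your sketch produces the claimed $4\delta$; some genuinely different idea is needed to obtain this direction at that constant.
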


In particular, if $|w_C|_G > 2L$ then $w$ has infinite order,
since there is no bound on the length of shortest representatives of its powers.

The next step is to show that, for such a word $w$, a conjugate of a power of
$w$ that is equal in $G$ to a shortlex straight element can be efficiently found.
The following two results summarise Section 3.2 of \cite{epstein2006linearity}.

\begin{proposition}
\label{prop:long_conj_slex}
Suppose $u$ is a shortlex reduced word with $|u| > L$, such that all positive powers of $u$ label $L$-local $(1, 2\delta)$-quasigeodesics. Then there exists an integer $0 < k \le V^4$ and a word $a$ with $|a| \le 4\delta$, such that $\slex(a^{-1}u^ka)$ is shortlex straight.
\end{proposition}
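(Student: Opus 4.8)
The plan is to locate a point on the quasi-axis of $\tau(u)$ at which the shortlex geodesic structure recurs with bounded period, and to take $a$ to be the short word that shifts the basepoint onto that axis. Write $g:=\tau(u)$ and $g_i:=\tau(u^i)$. Since all positive powers of $u$ label $L$-local $(1,2\delta)$-quasigeodesics (and, reversing words, so do the negative powers), I would first fix a large $N$ and the shortlex geodesic $\gamma:=\slex(u^N)$ from \vertex{1} to $g_N$. Applying Proposition~\ref{prop:nice_qgeo} to the quasigeodesic $u^N$ and the geodesic $\gamma$, each intermediate vertex $g_i$ (for $0\le i\le N$) lies within $4\delta$ of a point $c_i\in\gamma$; set $h_i:=g_i^{-1}\tau(c_i)$, an element of the closed $4\delta$-ball around \vertex{1}.

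The use of the offsets is the following identity. Suppose $i<j$ satisfy $h_i=h_j=:a$, so that $c_i$ precedes $c_j$ on $\gamma$, and put $k:=j-i$. Then $\tau(c_i)=g_i\,a$ and $\tau(c_j)=g_j\,a$, whence
\[ a^{-1}u^k a \;=_G\; a^{-1}g^{\,k}a \;=\; (g_i a)^{-1}(g_j a) \;=\; \tau(c_i)^{-1}\tau(c_j), \]
and the right-hand side is precisely the subword $w$ of $\gamma$ running from $c_i$ to $c_j$. As a subword of the geodesic $\gamma$ this $w$ is geodesic, and as a subword of a shortlex-reduced word it is itself shortlex reduced; moreover $|a|\le 4\delta$, as required. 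Thus matching a single offset already yields a \emph{straight} conjugate of a bounded power of $u$, and since every offset lies in the $4\delta$-ball, which meets at most $V^2$ vertices (being contained in the product of two $2\delta$-balls), a repeated offset occurs with $k\le V^2$.

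To obtain the stronger conclusion that $w$ is shortlex \emph{straight}, I would pigeonhole on a richer but still bounded local state at each $c_i$: namely, the offset $h_i$ together with a bounded-length window of the word $\gamma$ centred at $c_i$. This combined state ranges over a set of size at most $V^4$ (two pieces, each bounded by a $2\delta$-ball count), so among $i=1,\dots,V^4+1$ two indices $i<j$ carry identical states; take $k:=j-i\le V^4$ and $a:=h_i$. For $N$ large the state near the start of $\gamma$ stabilises, fixing $w$, $a$ and $k$ independently of $N$, and agreement of the windows forces the letters of $\gamma$ to repeat with period $w$ throughout its interior. Hence $w^n$ is a subword of $\gamma$ for every $n$ with $nk$ well inside $N$; letting $N\to\infty$ gives $\slex(w^n)=w^n$ for all $n\ge 1$, i.e.\ $w=\slex(a^{-1}u^k a)$ is shortlex straight.

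The main obstacle is exactly this last upgrade from geometric recurrence to exact shortlex periodicity. Thin-triangle fellow travelling (Proposition~\ref{prop:nice_qgeo}) controls positions only up to the error $4\delta$, whereas being shortlex reduced is an exact, letter-by-letter lexicographic condition; one must therefore verify that the bounded data recorded at $c_i$ is genuinely a \emph{state}, in the sense that it determines the continuation of the shortlex normal form, so that agreement across one period propagates to all powers and no end-effect of $\gamma$ disturbs the interior. Arranging the recorded data to be rich enough for this while keeping it within a set of size $V^4$ (and the conjugator within the $4\delta$-ball) is the delicate part of the argument; the exponent $4$ reflects pigeonholing on a pair of such local states, one controlling the prefix and one the suffix behaviour of the normal form.
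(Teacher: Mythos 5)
You should know at the outset that the paper itself does not prove this proposition: it is quoted from Section 3.2 of \cite{epstein2006linearity} (with $V^4$ a deliberate weakening of the constant $Q^2$ obtained there), so your attempt has to be judged on its own terms and against that source. Judged so, it has a genuine gap, located exactly where you flag ``the delicate part''. Before that, one intermediate claim is simply wrong: matching a single offset $h_i=h_j=a$ does \emph{not} ``already yield a straight conjugate of a bounded power of $u$''. What it yields is that $\pi(a^{-1}u^ka)$ occurs as a subword of $\gamma=\pi(u^N)$, in particular that $a^{-1}u^ka$ has a geodesic, shortlex reduced representative --- which is a tautology, true of every group element and every choice of $a,k$. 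Straightness, exactly like shortlex straightness, is a statement about \emph{all} powers $w^n$, and a single matched pair gives no information about any power beyond the first.

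The main gap is then the ``upgrade'': you pigeonhole on the pair (offset, bounded window of letters of $\gamma$ around $c_i$) and assert that repetition of this pair forces $\gamma$ to repeat with period $w$ thereafter. That assertion is the entire content of the proposition, and it is never proved; indeed you concede it is unverified. Being shortlex reduced is a property of the whole word, and a bounded window of letters around a point of $\gamma$ does not determine how the shortlex normal form continues (the data that genuinely does control continuations is a state of the shortlex word acceptor, essentially a set of word-differences with comparison flags, which is a much richer object than a letter window, and is not counted by $V^2$). Nor can any purely combinatorial pigeonhole on bounded local data close the gap: what is needed is that a sequence taking finitely many values contains arbitrarily long constant runs along arithmetic progressions with one \emph{fixed} bounded difference $k$, and sequences over a finite alphabet need not have this property (Sturmian sequences fail it for every fixed $k$); so some structural fact about shortlex normal forms must be invoked, and none is. The passage to large $N$ is also unjustified: the words $\pi(u^N)$ for different $N$ need not agree on any initial segment, so ``the state near the start of $\gamma$ stabilises'' requires a compactness/subsequence argument you do not supply. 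Your instinct that a finite state should control continuations is the right one, but the state must be a state of the shortlex word acceptor (which this paper assumes precomputed): if the pair (offset, acceptor state) repeats at $c_i$ and $c_j$, let $p$ be the prefix of $\gamma$ ending at $c_i$ and $w$ the subword from $c_i$ to $c_j$; then $pw^m$ is accepted, i.e.\ shortlex reduced, for every $m\ge 1$ (pumping a loop, using that the language of shortlex reduced words is prefix-closed), and since subwords of shortlex reduced words are shortlex reduced, every $w^m$ is shortlex reduced, so $w=\pi(a^{-1}u^ka)$ is shortlex straight. This yields the proposition with $k$ bounded by $V^2$ times the number of acceptor states; the sharper bound $k\le Q^2\le V^4$ stated here is what the more careful argument of Section 3.2 of \cite{epstein2006linearity} achieves.
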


In \cite{epstein2006linearity}, $k$ is shown to be less than $Q^2$ where $Q$ is the number of group elements in the $4\delta$-ball around $\vertex{1}$, but $Q \le V^2$, so our statement is slightly weaker.

\begin{proposition}
\label{prop:test_sls}
Given a shortlex reduced word $u$, testing if $u$ is shortlex straight takes time $O(|u|)$.
\end{proposition}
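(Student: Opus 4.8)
The plan is to replace the a priori infinite condition ``$u^n$ is shortlex reduced for all $n\ge 1$'' by a single test on a bounded power $u^N$, where $N$ depends only on $G$ and $X$, and then to run that test in linear time using the pre-computed shortlex word acceptor $W$. Concretely, the reduction I aim to establish is the following: a shortlex reduced word $u$ is shortlex straight if and only if $u^N$ is shortlex reduced, where $N$ is the number of states of $W$.

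One direction of the reduction is immediate from the definition. For the converse I would use two standard facts about the shortlex structure: the shortlex language is prefix-closed (a prefix of a shortlex reduced word is again shortlex reduced), and consequently $W$ may be normalised to a deterministic automaton with a single sink ``dead'' state, entered exactly when a prefix first fails to be shortlex reduced; thus a word is shortlex reduced precisely when the run of $W$ on it from the initial state $s_0$ never reaches the dead state. Writing $s_k$ for the state reached after reading the prefix $u^k$, if $u^N$ is shortlex reduced then $s_0, s_1, \ldots, s_N$ are all live, so by pigeonhole $s_i = s_j$ for some $i < j$. By determinism, reading a further copy of $u$ from $s_j = s_i$ reproduces exactly the (live) run that carried $s_{j-1}$ to $s_j$, so $W$ loops indefinitely through live states as more copies of $u$ are read; hence the run of $W$ on $u^n$ avoids the dead state for every $n$, and $u$ is shortlex straight.

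Given the reduction the algorithm is routine: run $W$ through successive copies of $u$, maintaining the current state, and halt as soon as either the dead state is reached (report that $u$ is not shortlex straight) or a state $s_k$ repeats a previous one (report that it is); one of these must occur within $N$ copies. Each transition costs $O(1)$ and at most $N|u|$ transitions are performed, so the total running time is $N|u| = O(|u|)$ since $N$ is constant. Equivalently, one could form $u^N$ explicitly and apply Lemma~\ref{lemma:short_lex}, testing shortlex reducedness by comparing $\slex(u^N)$ with $u^N$ in time $O(|u^N|) = O(|u|)$.

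The hard part will be the reduction rather than the timing: I must argue carefully that surviving $N$ copies forces a self-perpetuating cycle of live states. This rests on the prefix-closedness of the shortlex language and on the normalisation of $W$ to a single dead sink, both of which I would state explicitly before invoking them, and on keeping $N$ bounded purely in terms of $G$ and $X$ so that the estimate $N|u| = O(|u|)$ is legitimate.
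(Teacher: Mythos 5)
Your proof is correct and is essentially the argument this proposition rests on: the paper itself gives no proof of Proposition \ref{prop:test_sls}, deferring to Section 3.2 of \cite{epstein2006linearity}, where shortlex straightness is tested in exactly this way --- prefix-closedness of the shortlex language, the pre-computed word acceptor normalised to have a single dead sink, and a pigeonhole on the states reached after successive copies of $u$, so that surviving a bounded power forces a self-perpetuating cycle of live states. One harmless slip: by determinism, reading $u$ from $s_j=s_i$ reproduces the run that carried $s_i$ to $s_{i+1}$ (not the one that carried $s_{j-1}$ to $s_j$), but the resulting eventual periodicity through live states is exactly as you claim, and the $O(|u|)$ timing stands.
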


Finding the shortlex straight conjugate of a power is thus just a case of exhaustively testing each $k$ and $a$ as in Proposition \ref{prop:long_conj_slex}. Once a word is shortlex straight, it is easier to test conjugacy against it. The next result summarises Section 3.3 of \cite{epstein2006linearity}.

\begin{proposition}
\label{prop:test_conj_sls}
If $u$ is shortlex straight, $v$ is a word with $|v|_G > L$,
such that all positive powers of $v$ are
$(1, 2\delta)$ $L$-local quasigeodesics,
and $g^{-1}vg =_G u$ for some $g$, then there exists a word $h$ with
$|h| \le 6\delta$ such that $\slex(h^{-1}vh)$ is a cyclic conjugate of $u$.
\end{proposition}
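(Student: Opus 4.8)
The plan is to prove Proposition~\ref{prop:test_conj_sls} by exploiting the geometric rigidity of the quasigeodesic labelled by powers of $v$ together with the periodicity forced by $u$ being shortlex straight. The intuition is that $g^{-1}vg =_G u$ means that conjugating $v$ by $g$ produces a word whose positive powers are \emph{exactly} geodesic (since $u$ is shortlex straight, $u^n$ is geodesic for all $n$), so $g$ must move the infinite quasigeodesic axis of $v$ very close to the genuine geodesic axis of $u$. The strategy is to locate a suitable conjugating element $h$ of bounded length by reading off a point on the $u$-axis that lies within a bounded distance of a point on the $v$-axis, and then to argue that $\slex(h^{-1}vh)$ must in fact be a cyclic conjugate of $u$.

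First I would set up the two bi-infinite paths: the path $P_v$ labelled by a large two-sided power of $v$ through $\vertex{1}$ (which is an $L$-local $(1,2\delta)$-quasigeodesic by hypothesis), and the path $P_u$ labelled by the corresponding power of $u$. By Proposition~\ref{prop:nice_qgeo}, $P_v$ fellow-travels a genuine geodesic connecting its endpoints at distance at most $4\delta$, so $P_v$ stays uniformly close to a bi-infinite geodesic $\gamma_v$. The relation $g^{-1}vg =_G u$ says that the element $\tau(g)$ conjugates the cyclic action of $v$ to that of $u$; translating into the Cayley graph, left-translation by $\tau(g)^{-1}$ (or its inverse, with care about the exact convention) carries the $u$-axis onto the $v$-axis. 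The key point is that $\vertex{1}$ lies on $P_u$, so its image under this translation is a vertex $p$ lying within $4\delta$ of the geodesic $\gamma_v$, and hence within a bounded distance of $P_v$ itself. I would then take $h$ to be a shortlex geodesic word from $\vertex{1}$ to the nearest vertex $q$ of $P_v$; the fellow-travelling bound gives $|h| \le 4\delta + (\text{stabiliser slack})$, and I expect the stated bound $6\delta$ to come from combining the $4\delta$ from Proposition~\ref{prop:nice_qgeo} with an extra $2\delta$ coming from thinness of the relevant triangle when one snaps $p$ to the nearest vertex on $P_v$ having the correct ``phase'' relative to the period of $v$.

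Having produced such an $h$ with $|h|\le 6\delta$, the remaining task is to verify that $\slex(h^{-1}vh)$ is genuinely a cyclic conjugate of $u$ and not merely a bounded-length conjugate of $v$ that happens to be short. Here I would use the periodicity: conjugating $v$ by $h$ moves the basepoint to the vertex $q$ on $P_v$, and since $q$ lies close to $\gamma_v$ which fellow-travels the geodesic axis of $u$, the word $h^{-1}vh$ reads off a word tracking $u$'s axis starting from a point whose phase is determined by where $q$ sits along the period. Because $u$ is shortlex straight, its cyclic conjugates are precisely the shortlex straight words read off the $u$-axis starting at each of the $|u|$ vertices of one period; matching the phase of $q$ to the corresponding vertex shows $\slex(h^{-1}vh)$ coincides with one such cyclic conjugate.

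The main obstacle I anticipate is the \emph{phase/quantisation} issue: the abstract conjugacy gives a point $p$ close to the axis, but to read off an \emph{exact} cyclic conjugate of $u$ one must snap $p$ to a vertex of $P_v$ that lies at the correct offset modulo the period $|u|$, and controlling the length cost of this snap while guaranteeing the output is an exact (not approximate) cyclic conjugate is the delicate step. This is where the precise constant $6\delta$ is earned, and where one must use that both $u$ and the conjugated word are shortlex straight — so that equality of the infinite periodic patterns they trace (which the thinness of triangles forces once the basepoints are within $\delta$ of each other and correctly phased) upgrades to equality of the finite words as cyclic conjugates. I would handle this by comparing the two periodic bi-infinite labelled paths letter by letter over one full period, using $\delta$-thinness to show the letters must agree once the basepoints are aligned.
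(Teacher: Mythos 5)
First, a point of orientation: the paper itself does not prove this proposition — it is imported as a summary of Section 3.3 of \cite{epstein2006linearity} — so your attempt has to be judged against that argument. Against it, your proposal has a genuine gap at its central step. You claim that, because $g^{-1}vg =_G u$, left-translation by $\tau(g)^{\pm 1}$ ``carries the $u$-axis onto the $v$-axis'', and hence that the image $p$ of the basepoint $\vertex{1}$ lies within $4\delta$ of the geodesic $\gamma_v$. This is false. Left-translation by $\tau(g)$ sends $\evaluate{u^n}$ to $\evaluate{gu^n} = \evaluate{v^ng}$, so the image of the $u$-axis is the \emph{right}-translate by $g$ of the $v$-axis: each image point lies at distance $|g|_G$ from the corresponding point $\evaluate{v^n}$, and the image of the basepoint is $\evaluate{g}$ itself, which can be arbitrarily far from $\gamma_v$. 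Consequently the word $h$ you construct carries no bounded-length guarantee; its length is comparable to $|g|_G$. The needed proximity holds only ``in the middle of long powers'': the correct argument takes $n$ large, forms the geodesic quadrilateral with corners $\vertex{1}$, $\evaluate{v^n}$, $\evaluate{v^ng}=\evaluate{gu^n}$, $\evaluate{g}$, replaces the $L$-local quasigeodesic side labelled $v^n$ by a geodesic (cost $4\delta$, by Proposition \ref{prop:nice_qgeo}), and uses thinness of the two triangles into which a diagonal cuts the quadrilateral to show that any vertex $\evaluate{v^i}$ whose distance from both ends exceeds $|g|_G+4\delta$ lies within $4\delta+2\delta=6\delta$ of a point of the form $\evaluate{gu^ju(k)}$ on the side labelled $u^n$. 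That is where the constant $6\delta$ comes from, not from a phase-snapping correction near the basepoint.

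Your closing step is also not viable as sketched. You propose to show that $\slex(h^{-1}vh)$ is a cyclic conjugate of $u$ by comparing the two periodic paths ``letter by letter'', arguing that $\delta$-thinness forces the letters to agree once the basepoints are aligned. Thinness forces proximity of paths, never equality of their labels: two geodesics that $\delta$-fellow-travel can have entirely different labels. The actual mechanism is algebraic, and it is exactly why the two nearby points must be chosen in the forms above. Taking $h := \slex(v^{-i}gu^ju(k))$, which has length at most $6\delta$ by the previous paragraph, one computes
$h^{-1}vh =_G u(k)^{-1}u^{-j}(g^{-1}vg)u^ju(k) =_G u(k)^{-1}uu(k) =_G u(k{:}|u|)\,u(k)$,
a cyclic conjugate of $u$ as a group element. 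Since $u$ is shortlex straight, $u^2$ is shortlex reduced, and the cyclic conjugate $u(k{:}|u|)u(k)$ is a subword of $u^2$, hence itself shortlex reduced; therefore $\slex(h^{-1}vh)$ is literally that cyclic conjugate as a word. The ``phase'' issue you flag is thus resolved for free by requiring the endpoint on the $v$-side to be a genuine power $\evaluate{v^i}$ (so that conjugation by it fixes $v$) and the endpoint on the other side to be $\evaluate{gu^ju(k)}$ (so that the conjugator on the $u$-side is a prefix of a power of $u$); no letter-by-letter geometric comparison can substitute for this computation.
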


In \cite{epstein2006linearity}, the authors test whether a word $u$ is a cyclic conjugate of another word $v$ by testing if $v$ appears as a substring of $u^2$, using the Knuth-Morris-Pratt algorithm.
The standard implementation of this algorithm involves a lookup table of size $O(|u|)$, so might be imagined to take time $O((|u| + |v|)\log(|u|))$ on a Turing machine.
An alternative implementation on a multi-head Turing machine that runs in
time $O(|u| + |v|)$ is presented in~\cite{galil1976real}.

A refinement of the proof of Proposition \ref{prop:test_conj_sls} gives a nice form for elements of the centraliser of a shortlex straight word. This result summarises Section 3.4 of \cite{epstein2006linearity}.

\begin{proposition}
\label{prop:find_centraliser_sls}
If $z$ is shortlex straight and $y^l=z$ with $l \ge 1$ maximal, then $g\in C_G(z)$ implies that $g=_Gy^iy_1h$, with $y_1$ a prefix of $y$, $i\in \mathbb{Z}$ and $|h| \le 2\delta$. The prefix $y_1$ depends only on $h$.
Furthermore, $l$, $y$ and the set of words $y_1h$ can be computed in time $O(|z|)$.
\end{proposition}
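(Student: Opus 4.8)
The plan is to read off the centraliser from the geometry of the axis of $\tau(z)$ and then convert that geometric description into words. First I would fix the bi-infinite path $\Lambda$ obtained by concatenating copies of the path labelled $z$ through the vertices $\tau(z)^n$ for $n\in\mathbb{Z}$. Since $z$ is shortlex straight, every finite subpath of $\Lambda$ is a subword of some power $z^N$ and is therefore geodesic, so $\Lambda$ is a genuine bi-infinite geodesic with vertex set $\{\tau(z)^n\tau(z(j)) : n\in\mathbb{Z},\ 0\le j<|z|\}$, translated along itself by $\tau(z)$. For $g\in C_G(z)$ the translate $\tau(g)\Lambda$ is again invariant under left multiplication by $\tau(z)$, because $\tau(g)$ commutes with $\tau(z)$; hence it is a second $\tau(z)$-axis and lies within bounded Hausdorff distance of $\Lambda$. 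As $\tau(g)=\tau(g)\cdot\vertex{1}\in\tau(g)\Lambda$, the vertex $\tau(g)$ is within bounded distance of $\Lambda$. Taking $p$ to be a nearest vertex of $\Lambda$ to $\tau(g)$ and setting $h:=\tau(p)^{-1}\tau(g)$, I obtain $g=_G\tau(p)\,h$; writing $p=\tau(z)^n\tau(z(j))$ and then expressing the prefix $z(j)$ of $z=y^l$ as $y^{j'}y_1$ with $y_1$ a proper prefix of $y$ gives $g=_G y^{i}y_1h$ with $i=ln+j'$, which is the claimed form once the displacement bound is sharpened.

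Two points carry the real content. The first is improving the generic two-axis estimate to the sharp bound $|h|\le 2\delta$, for which I would not quote Proposition \ref{prop:test_conj_sls} (which only yields $6\delta$) but instead run a thin-triangle argument directly on the geodesics $[\vertex{1},\tau(g)]$, $[\vertex{1},\tau(z)^N]$ and $\tau(g)[\vertex{1},\tau(z)^N]=[\tau(g),\tau(z)^N\tau(g)]$ for large $N$; here the honest geodesity of $\Lambda$ coming from shortlex-straightness, together with the standing assumption $\delta\ge 1$, is what pins the constant. The second, and in my view the main obstacle, is showing that $y_1$ is determined by $h$. Reducing to $i=0$ by multiplying through by a power of $y$, suppose $y_1h$ and $y_1'h$ both lie in $C_G(z)$; then $c:=\tau(y_1)\tau(y_1')^{-1}\in C_G(z)$, and since both $\tau(y_1h)$ and $\tau(y_1'h)$ project onto $\Lambda$ within a single period of the translation, $c$ has bounded translation length along $\Lambda$. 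Using the standard fact that an infinite-order element and its powers share a centraliser, so that $C_G(z)=C_G(\tau(y))$ is virtually cyclic, a periodicity argument exploiting that $y$ is straight and, by the maximality of $l$, not a proper power forces the translation length of $c$ to be $0$ and ultimately $\tau(y_1)=\tau(y_1')$; as distinct prefixes of the geodesic word $y$ represent distinct elements, $y_1=y_1'$.

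For the complexity claim I would first compute the primitive root, obtaining $l$ and the word $y$ with $z=y^l$ from the smallest period of $z$ via the Knuth--Morris--Pratt failure function in time $O(|z|)$; shortlex-straightness of $z$ passes to $y$ since every power of $y$ is a subword of a power of $z$. Rather than test all $|y|$ prefixes against the centraliser condition, I would use the identity $y_1^{-1}zy_1=_G z_{(t)}$, where $z_{(t)}$ denotes the cyclic rotation of $z$ by $t=|y_1|$ and is itself shortlex reduced, being a length-$|z|$ subword of the shortlex-reduced word $z^2$. This shows $y_1h\in C_G(z)$ precisely when $z_{(t)}=_G hzh^{-1}$. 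For each of the $O(1)$ words $h$ in the closed $2\delta$-ball I would compute $w:=\slex(hzh^{-1})$ by Lemma \ref{lemma:short_lex} in time $O(|z|)$ and locate $w$ as a length-$|z|$ block of $z^2$ using the linear-time substring search of \cite{galil1976real}; a match at position $t$ exhibits $w=z_{(t)}$ and yields the unique admissible prefix $y_1=y(t\bmod|y|)$ paired with $h$, while the absence of a match means no such $y_1$ exists. Since there are boundedly many $h$ and each costs $O(|z|)$, and the computation of $y$ and $l$ is also $O(|z|)$, the whole procedure runs in $O(|z|)$, as required.
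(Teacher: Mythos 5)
First, a framing point: the paper does not prove this proposition at all --- it is imported as a summary of Section 3.4 of \cite{epstein2006linearity} --- so your attempt must be judged against what a complete proof requires rather than against a proof in the text. Your opening paragraph (every $g\in C_G(z)$ lies within $2\delta$ of the axis, giving $g=_Gy^iy_1h$) is sound in outline, except that the configuration you name is one-sided: a thinness argument on $[\vertex{1},\tau(g)]$, $[\vertex{1},\tau(z)^N]$ and $\tau(g)[\vertex{1},\tau(z)^N]$ controls nothing about the corner $\tau(g)$ itself; one needs the two-sided quadrilateral on $z^{-N}, z^{N}, gz^{N}, gz^{-N}$, in which $g$ is an interior point of a side. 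Your final, algorithmic paragraph is essentially correct. The genuine gap is exactly where you locate the main content: the claim that $y_1$ depends only on $h$, which (as you correctly reduce it) amounts to showing that distinct proper prefixes $y_1\ne y_1'$ of $y$ can never satisfy $y_1y_1'^{-1}\in C_G(z)$.

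Your argument for that claim does not work. First, ``an infinite-order element and its powers share a centraliser'' is false in hyperbolic groups with torsion: $C_G(y)\subseteq C_G(y^l)$ can be proper (in $\mathbb{Z}/3\rtimes\mathbb{Z}$ with inverting action, the finite factor centralises the square of the $\mathbb{Z}$-generator but not the generator), and torsion is precisely the hard case in this paper. Second, and fatally, maximality of $l$ is a statement about words and does not quantise translation lengths in $C_G(z)$ by $|y|$: the root of $\tau(y)$ in the group can be strictly finer than its word root. For example, with $G=\mathbb{Z}=\langle a\rangle$ and generating set $\{a^{\pm1},b^{\pm1}\}$ where $b=_Ga^2$, the word $z=y=b$ is shortlex straight with $l=1$ maximal, yet $C_G(z)=G$ contains $a$, whose translation number is $\frac{1}{2}|y|$. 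Moreover $c=y_1y_1'^{-1}$ shifts the axis by roughly $|y_1|-|y_1'|\ne 0$, so an argument ``forcing its translation length to be $0$'' is aiming at the wrong conclusion; what must be shown is that such a $c$ cannot lie in $C_G(z)$ at all, and coarse translation-length arguments cannot do this because they carry $O(\delta)$ ambiguity while the required conclusion is exact. The fact that closes the gap is one you state yourself in the last paragraph but never use here: every cyclic rotation $z_{(t)}$ of $z$ is itself shortlex reduced (if $\pi(z_{(t)})$ were shortlex-smaller, then $z(t)\,\pi(z_{(t)})$ would beat the prefix $z\,z(t)$ of $z^2$, contradicting shortlex straightness). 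Granting this, $y_1y_1'^{-1}\in C_G(z)$ says $z_{(t)}=_Gz_{(t')}$ with $t=|y_1|$, $t'=|y_1'|$; uniqueness of shortlex representatives forces $z_{(t)}=z_{(t')}$ as words; hence the bi-infinite word $z^{\infty}$ has period $|t-t'|$ as well as $|z|$, hence period $\gcd(|t-t'|,|z|)<|y|$, exhibiting $z$ as a word-power of a word shorter than $y$ and contradicting the maximality of $l$. Without this step, the claim that $y_1$ is a function of $h$, the bound of at most $V$ words needed in Proposition \ref{prop:eh_solve_conj}, and your own assertion that a pattern match yields ``the unique admissible prefix'' all remain unproven.
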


That completes the information that will be required from \cite{epstein2006linearity}; the next proposition summarises this section.

\begin{proposition}
\label{prop:eh_solve_conj}
There exists an algorithm which, given shortlex reduced words $u$ and $v$ with $|u_C|_G > 2L$ and $|v_C|_G > 2L$,
computes words $a$ and $y$, and a set $S$ of at most $V$ words,
such that $y$ is shortlex straight, and $u^g =_G v$ implies that $g =_G ay^ns$ for some $s \in S$. All output words have length in $O(|u| + |v|)$ and the algorithm runs in time $O(|u| + |v|)$.
\end{proposition}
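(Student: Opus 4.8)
My plan is to exploit the fact that whenever $u$ and $v$ are conjugate, the set of $g$ with $u^g =_G v$ is a single right coset $C_G(u)g_0$ of the centraliser of $u$. So it suffices to obtain a usable description of a centraliser and to produce one conjugating element. Section~\ref{section:holt} supplies both, but only after passing to a shortlex straight conjugate, and that passage inevitably replaces $u$ by a bounded power $u^{k}$ (Proposition~\ref{prop:long_conj_slex}). The observation that keeps everything clean is that the statement asserts only a \emph{containment}: $u^g =_G v$ need merely \emph{imply} $g =_G ay^{n}s$. Since $u^g =_G v$ forces $(u^{k})^{g} =_G v^{k}$, and since $C_G(u)\subseteq C_G(u^{k})$ automatically, I may work entirely at the level of the power $u^{k}$, where the shortlex straight machinery applies directly. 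This sidesteps the delicate matters of recovering $u$ from its shortlex straight power, of root alignment, and of whether $C_G(u)=C_G(u^{k})$.

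First I would set $u' := \slex(u_C)$ and $v' := \slex(v_C)$, computable in time $O(|u|+|v|)$ by Lemma~\ref{lemma:short_lex}. Since $u_C =_G u_L^{-1}uu_L$, the word $u'$ represents a conjugate of $u$ by $u_L$, and likewise for $v'$; and because $|u_C|_G,|v_C|_G>2L$, Proposition~\ref{prop:short_conj} guarantees that all positive powers of $u'$ and of $v'$ label $L$-local $(1,2\delta)$-quasigeodesics. Applying Proposition~\ref{prop:long_conj_slex} to $u'$ (exhaustively over the $O(1)$ choices of $k\le V^4$ and of a word $c$ with $|c|\le 4\delta$, testing each candidate for shortlex straightness with Proposition~\ref{prop:test_sls}) yields a shortlex straight word $z := \slex(c^{-1}u'^{k}c)$ together with the relation $z =_G p^{-1}u^{k}p$, where $p := u_Lc$ has $|p|=O(|u|)$. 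Feeding $z$ to Proposition~\ref{prop:find_centraliser_sls} returns its shortlex straight root $y$ (with $z=y^{l}$) and a set $T$ of at most $V$ tail words, so that $C_G(z)=\{\,y^{i}t:i\in\mathbb{Z},\ t\in T\,\}$; conjugating by $p$ gives $C_G(u^{k})=\{\,py^{i}tp^{-1}:i\in\mathbb{Z},\ t\in T\,\}$. All of this costs $O(|u|)$.

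Next I would produce a single $g_0$ conjugating $u^{k}$ to $v^{k}$. Form the word $w := (v')^{k}$, all of whose positive powers (being positive powers of $v'$) are $L$-local $(1,2\delta)$-quasigeodesics and which has $|w|_G>L$. With $z$ in the role of the shortlex straight word, Proposition~\ref{prop:test_conj_sls} applies: if $z$ and $w$ are conjugate then there is an $h$ with $|h|\le 6\delta$ for which $\slex(h^{-1}wh)$ is a cyclic conjugate of $z$. Ranging over the $O(1)$ admissible $h$ and testing, via a Knuth--Morris--Pratt substring search in $z^{2}$, whether $\slex(h^{-1}wh)$ is such a cyclic conjugate decides conjugacy of $z$ and $w$ in time $O(|u|+|v|)$ and, in the positive case, exhibits an explicit short conjugator from $w$ to $z$; composing with $p$ and $v_L$ gives a word $g_0$ with $(u^{k})^{g_0}=_G v^{k}$ and $|g_0|=O(|u|+|v|)$. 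If no such $h$ is found, then $u^{k}$ and $v^{k}$ are not conjugate, hence neither are $u$ and $v$, and I output $S=\emptyset$. Otherwise I set $a:=p$, keep the shortlex straight $y$, and put $S:=\{\,\slex(tp^{-1}g_0):t\in T\,\}$, a set of at most $V$ words of length $O(|u|+|v|)$. The containment is then immediate: if $u^g=_G v$ then $(u^{k})^{g}=_G v^{k}=_G(u^{k})^{g_0}$, so $gg_0^{-1}\in C_G(u^{k})$ and hence $g\in C_G(u^{k})g_0=\{\,py^{i}(tp^{-1}g_0)\,\}=\{\,ay^{n}s:n\in\mathbb{Z},\ s\in S\,\}$.

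The main obstacle is precisely the power $k$ forced by the reduction to a shortlex straight word: there is no reason for the power used in reducing $u$ to match one arising from $v$, nor for a conjugator of $u^{k}$ to conjugate $u$ itself, so a direct comparison of the shortlex straight data of $u$ and $v$ does not decide conjugacy of $u$ and $v$. My resolution is to never leave the level of $u^{k}$: I apply the \emph{same} power $k$ to $v'$, and rely on the containment-only form of the statement together with $C_G(u)\subseteq C_G(u^{k})$, so that working with $C_G(u^{k})$ and a conjugator of $u^{k}$ to $v^{k}$ still captures every genuine conjugator of $u$ to $v$. The one remaining point to verify is that the root $y$ returned by Proposition~\ref{prop:find_centraliser_sls} may be taken shortlex straight; this is a short argument, since $z=y^{l}$ being shortlex straight forces every power of $y$ to be geodesic (its stable length equals $|y|$), whence the shortlex-least root is shortlex straight.
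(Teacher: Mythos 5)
Your proposal is correct and follows essentially the same route as the paper's own proof: pass to $z := \slex(((u_C)^k)^{a'})$ via Propositions \ref{prop:short_conj}, \ref{prop:long_conj_slex} and \ref{prop:test_sls}, extract the root $y$ and the centraliser description of $z$ via Proposition \ref{prop:find_centraliser_sls}, decide conjugacy of $z$ with $(v_C)^k$ via Proposition \ref{prop:test_conj_sls} and Knuth--Morris--Pratt, and then observe that any $g$ with $u^g =_G v$ conjugates $u^k$ to $v^k$ and hence lies in the coset you describe. Your coset formulation $g \in C_G(u^k)g_0$ is just a repackaging of the paper's direct computation $z^c =_G z^{a'^{-1}(u_L)^{-1}gv_L}$, and your reliance on containment rather than equality in Proposition \ref{prop:find_centraliser_sls} (which only asserts $C_G(z) \subseteq \{y^i y_1 h\}$) is exactly what is needed.
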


\begin{proof}
Proposition \ref{prop:short_conj} implies that all positive powers of both $\slex(u_C)$ and $\slex(v_C)$ label $L$-local $(1, 2\delta)$-quasigeodesics. Applying Proposition \ref{prop:long_conj_slex} implies that there is a word $a'$ of length at most $4\delta$ and a positive integer $i \le V^4$ with $z := \slex(((u_C)^i)^{a'})$ shortlex straight.
Since both $|a'|$ and $i$ are in $O(1)$ and testing if $z := \slex(((u_C)^i)^{a'})$ is shortlex straight takes time $O(|u|)$, a specific $a'$ and $i$ can be found in time $O(|u|)$.

Using the K-M-P algorithm from \cite{galil1976real}, we find the second instance of $z$ as a substring of $z^2$. If this match is found at position $j$ then $z = z(j:|z|) z(j)$, so $z = (z(j))^l$ for some $l$ and $l$ is maximal for words of this form. Let $y = z(j)$; then $y$ is also shortlex straight.

If $u$ is conjugate to $v$ then $u^i$ is conjugate to $v^i$ and so $z$ is conjugate to $(v_C)^i$. Applying Proposition \ref{prop:test_conj_sls} implies that,
if this is the case, then $((v_C)^i)^b$ is equal in $G$ to a cyclic conjugate of $z$ for some word $b$ with $|b| \le 6\delta$. Test for all words $b$ with $|b| \le 6\delta$ whether $\slex(((v_C)^i)^b)$ is a substring of $z^2$ using the K-M-P algorithm again. There are $O(1)$ tests, each taking time $O(|u| + |v|)$,
so a specific $b$ satisfying this property, if one exists, can be found in time $O(|u| + |v|)$. If all tests fail, $u$ and $v$ are not conjugate so the algorithm stops. Otherwise a subword $z(k)$ is found such that $((v_C)^i)^{bz(k)^{-1}} =_G z$. Let $c = z(k)b^{-1}$ for the first $b$ found and continue.

Apply Proposition \ref{prop:find_centraliser_sls} to compute a set $S'$ of words $y_1h$ such that $z^d =_G z$ implies that $d =_G y^n s'$ for some $n \in \mathbb{Z}$ and $s' \in S'$. This again takes time $O(|u| + |v|)$.

Now suppose that $u^g =_G v$. Note that
$$ z^c =_G  (v_C)^i =_G (v^i)^{v_L} =_G (u^i)^{gv_L}
 =_G  ((u_C)^i)^{(u_L)^{-1}gv_L}
 =_G z^{a'^{-1}(u_L)^{-1}gv_L},$$
so that $a'^{-1}(u_L)^{-1}gv_Lc^{-1} \in C_G(z)$, and so is equal in $G$ to $y^ny_1h$ with $n \ge 0$, and $y_1h \in S'$. Therefore $g=_Gu_La'y^ny_1hcv_L^{-1}$.

Let $a := u_La'$ and $S := \{ y_1hcv_L^{-1} : y_1h \in S' \}$ and the proposition is proved.
\end{proof}

%
\subsection{Finding long powers of infinite order elements}

The aim of this section is to show that, given a word $w$ of infinite order,
there exists an efficiently computable shortlex
reduced word $w'$, which is equal in $G$ to a conjugate of a power of $w$, and for which $|\slex(w'_C)|>2L$.
Given two infinite order words $u$ and $v$, finding these conjugates of powers of $u$ and $v$ allows Proposition \ref{prop:eh_solve_conj} to be applied, thus providing a description of conjugating elements for any pair of infinite order words.

The next three results are reasonably well-known properties of word-hyperbolic groups and hyperbolic spaces;
they are taken from \cite{alonso1990notes} although similar results appear in many other expositions of the subject area. The  values of the constants in our statements are derived from the proofs in \cite{alonso1990notes}.

\begin{proposition}
\label{prop:inf_order_qgeo} \cite[Proposition 3.2]{alonso1990notes}
For any geodesic word $w$ of infinite order, all positive powers of $w$ label $(\lambda, \epsilon)$-quasigeodesics in $\Gamma$, where $\lambda = |w|V$ and $\epsilon = 2|w|^2V^2 + 2|w|V$.
\end{proposition}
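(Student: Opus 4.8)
The plan is to convert the quasigeodesic inequality into a single statement about the growth of the powers of $g := \evaluate{w}$, and then to establish that growth by a pigeonhole argument that plays the finiteness of the $2\delta$-ball off against the infinite order of $g$.

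First I would reduce to the case of pure powers. Write $\ell := |w| = |g|_G$ (recall that $w$ is geodesic). For points $x = \alpha(s)$ and $y = \alpha(t)$ on the path labelled $w^n$ with $s < t$, each of $x$ and $y$ lies within $\ell$ of a vertex $g^p$, respectively $g^q$, with $p \le q$; hence $d_\alpha(x,y) \le (q-p)\ell + 2\ell$ while $d(x,y) \ge |g^{q-p}|_G - 2\ell$. Consequently it suffices to prove a linear lower bound of the shape $|g^k|_G \ge k/V - (2\ell V + 2)$ for every $k \ge 1$: feeding this into the two estimates above and collecting the $O(\ell)$ error terms yields exactly $d_\alpha(x,y) \le \lambda\, d(x,y) + \epsilon$ with $\lambda = \ell V$ and $\epsilon = 2\ell^2 V^2 + 2\ell V$. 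This step is routine bookkeeping; all of the content lies in the lower bound on $|g^k|_G$.

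To prove that lower bound I would argue by contradiction. Suppose $|g^m|_G < m/V - (2\ell V + 2)$ for some $m$; fix a geodesic $\gamma$ from $\vertex{1}$ to $g^m$, of length $N := |g^m|_G$, and for each $0 \le i \le m$ let $c_i$ be a vertex of $\gamma$ nearest to $g^i$. Applying $\delta$-thinness to the geodesic triangles on $\vertex{1}, g^i, g^m$ controls the distance $d(g^i, c_i)$ through the Gromov inner product $(\vertex{1}, g^m)_{g^i}$. Granting that each $c_i^{-1}g^i$ lies in the closed $2\delta$-ball, each power is then pinned down by the pair consisting of its foot $c_i$ (one of at most $N+1$ vertices of $\gamma$) and the element $c_i^{-1}g^i$ (one of at most $V$ elements of that ball), so there are fewer than $(N+1)V < m+1$ possible configurations. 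Since there are $m+1$ powers, two of them, $g^i$ and $g^j$ with $i \ne j$, share a configuration and are therefore equal, giving $g^{\,j-i} = 1$. This contradicts the infinite order of $g$ and completes the argument.

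The step I expect to be the real obstacle is confining the configurations $c_i^{-1}g^i$ to the $2\delta$-ball, so that the pigeonhole yields an \emph{exact} coincidence of powers rather than merely a short power (which would be consistent with infinite order). A crude fellow-travelling estimate only places $g^i$ within roughly $\ell$ of $\gamma$, which would replace $V$ by a ball of radius $\sim \ell$ and destroy the stated constants; pushing the relevant radius down to $2\delta$ is precisely where the hyperbolicity inequalities, and the specific choice of $V$ as the $2\delta$-ball, must be used with care. This is the same mechanism by which \cite{alonso1990notes} pins down $\lambda = |w|V$ and $\epsilon = 2|w|^2V^2 + 2|w|V$.
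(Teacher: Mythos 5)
Your proposal cannot be checked against an internal argument, because the paper does not prove this proposition at all: it is imported from \cite{alonso1990notes} with the constants read off from the proof given there. Judged on its own merits, your attempt contains a genuine gap, and it is exactly the step you flag as ``the real obstacle''. Your opening reduction to the growth estimate $|g^k|_G \ge k/V - (2\ell V + 2)$ is the right first move (though the bookkeeping is not quite ``exact'': with $d_\alpha(x,y)\le(q-p)\ell+2\ell$ and $d(x,y)\ge|g^{q-p}|_G-2\ell$ the error term comes out as $2\ell^2V^2+2\ell^2V+2\ell V+2\ell$, a little larger than the stated $\epsilon$; this is cosmetic). The pigeonhole, however, rests on the claim that $d(g^i,\gamma)\le 2\delta$, and this is not a delicate point that careful use of hyperbolicity will rescue: it is false in precisely the regime where you invoke it. Thinness controls distances between points on the \emph{sides} of a triangle; it never pushes a \emph{corner} close to the opposite side. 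Indeed, for any point $q$ on a geodesic $[\vertex{x},\vertex{y}]$ and any point $p$ one has $d(p,q)\ge(\vertex{x},\vertex{y})_p$, so here
\[
d(g^i,\gamma)\;\ge\;(1,g^m)_{g^i}\;=\;\frac{|g^i|_G+|g^{m-i}|_G-|g^m|_G}{2},
\]
which, under your contradiction hypothesis that $|g^m|_G$ is small, is \emph{large} whenever the intermediate powers are long. Equivalently, $d(g^i,\gamma)\le2\delta$ amounts to the near-additivity $|g^i|_G+|g^{m-i}|_G\le|g^m|_G+4\delta$, which is essentially the statement being proved; assuming it is circular.

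The failure is concrete, not hypothetical. Your argument uses the infinite order of $g$ only \emph{after} the pigeonhole (to turn $g^i=g^j$ into a contradiction), so the confinement step would have to be a valid deduction for an arbitrary element whose $m$-th power is short. It is not: in the hyperbolic group $G=\mathbb{Z}_d\ast\mathbb{Z}$, take $h$ of order $d$, let $x$ generate the free factor, set $g=x^Rhx^{-R}$, and let $m$ be a multiple of $d$ larger than $V(2|g|V+2)$. Then $g^m=1$, so $\gamma$ is the single vertex $\vertex{1}$ and every $c_i=\vertex{1}$, while $|c_i^{-1}g^i|_G=|g^i|_G=2R+|h^i|_G\ge 2R+1$ whenever $d\nmid i$ --- arbitrarily far outside the $2\delta$-ball once $R>\delta$. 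So no thin-triangle argument can deliver the confinement, and with it the count $(N+1)V$ collapses. A workable proof must extract the $V$-pigeonhole from a family of short elements whose shortness does not presuppose the theorem: for instance, the $2\delta$-short ``rungs'' joining two geodesics that are forced to fellow-travel because the quadrilateral they bound has its other two sides short --- this is exactly the mechanism of Lemma \ref{lemma:trapezium} in this paper and of the conjugacy arguments in \cite{alonso1990notes} --- or else a count of conjugacy classes of short elements. Locating the powers $g^i$ near a geodesic, by contrast, is a \emph{consequence} of the proposition (via Proposition \ref{prop:qgeo_close_to_geo}), not an ingredient available for proving it.
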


\begin{proposition} \cite[Theorem 2.19]{alonso1990notes}
\label{prop:hyperbolic_divergence}
The function $e:\mathbb{R}_{\ge0} \to \mathbb{R}_{\ge0}$ with $e(0) = \delta$ and $e(l) = 2^{\frac{l}{\delta} - 2}$ for $l > 0$ is a divergence function for any $\delta$-hyperbolic space (i.e. given geodesics $\gamma = \path{x}{y}$ and $\gamma' = \path{x}{z}$, if $r, R \in \mathbb{N}$ with $r + R < \min\{|\gamma|, |\gamma'|\}$ and $d(\gamma(R), \gamma'(R)) > e(0)$, and if $\alpha$ is a path from $\gamma(R + r)$ to $\gamma'(R + r)$ lying outside the open ball of radius $R + r$ around $x$, then $|\alpha| > e(r)$).
\end{proposition}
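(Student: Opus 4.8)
The plan is to prove the contrapositive of the stated divergence estimate by reducing it to a clean statement about dyadic path lengths and proving that statement by induction. First I would reformulate the problem so that the basepoint-radius and the length scale decouple. For a path $\sigma$ lying outside the open ball $B(x,\rho)$ whose two endpoints $p,q$ lie at distance at least $\rho$ from $x$, write $\gamma_p=[x,p]$ and $\gamma_q=[x,q]$ for geodesics from $x$ to the endpoints. The key claim I would isolate is: for every integer $n\ge 0$, if $|\sigma|\le 2^{n}$ then $d(\gamma_p(\rho-(n+2)\delta),\gamma_q(\rho-(n+2)\delta))\le\delta$, with the convention $\gamma(t)=x$ for $t\le 0$. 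Granting this claim, the proposition follows: applying it with $\sigma=\alpha$, $p=\gamma(R+r)$, $q=\gamma'(R+r)$ and $\rho=R+r$, and choosing $n$ so that $(n+2)\delta=r$, an assumption $|\alpha|\le 2^{r/\delta-2}=2^{n}$ would force $d(\gamma(R),\gamma'(R))\le\delta$, contradicting the hypothesis; hence $|\alpha|>2^{r/\delta-2}=e(r)$.

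The claim itself I would prove by induction on $n$, and this bisection argument is where $\delta$-thinness enters. For the base case $n=0$ one has $d(p,q)\le 1\le\delta$, so the Gromov product $(p,q)_x$ is at least $\rho-\delta/2$, and $\delta$-thinness of the triangle $(x,p,q)$ shows the two geodesics stay within $\delta$ of one another out to radius $\rho-\delta/2\ge\rho-2\delta$. For the inductive step I would bisect $\sigma$ at its midpoint $m$ into two subpaths of length at most $2^{n-1}$, each still lying outside $B(x,\rho)$ with endpoints at radius at least $\rho$, and introduce the geodesic $\mu=[x,m]$. The inductive hypothesis applied to each half gives $d(\gamma_p(t),\mu(t))\le\delta$ and $d(\mu(t),\gamma_q(t))\le\delta$ at radius $t=\rho-(n+1)\delta$, whence $d(\gamma_p(t),\gamma_q(t))\le 2\delta$ by the triangle inequality. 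A single further application of $\delta$-thinness to the triangle $(x,\gamma_p(t),\gamma_q(t))$ then drops this back down: since the relevant Gromov product is at least $t-\delta$, the two geodesics are within $\delta$ at radius $t-\delta=\rho-(n+2)\delta$, completing the step.

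The structural heart of the argument — bisection combined with one thin-triangle comparison per level — is clean, so the main obstacle is the constant bookkeeping rather than any conceptual difficulty. In particular, I expect the delicate points to be: (i) verifying that the midpoint geodesic $\mu$ genuinely reaches every radius used, and that both halves satisfy the hypotheses of the claim (endpoints at radius at least $\rho$, image outside the open ball); (ii) managing the two appearances of the factor $2$ — the doubling of length that drives the exponential growth, and the transient $2\delta$ separation at each level that must be pulled back to $\delta$ by one extra inward step of $\delta$, which is precisely what forces the exponent $r/\delta$ rather than something larger; and (iii) converting the integer-indexed claim into the real-valued bound $e(r)=2^{r/\delta-2}$, for which I would use that $e$ is increasing together with a quasi-monotonicity statement for $t\mapsto d(\gamma(t),\gamma'(t))$ (itself a direct consequence of $\delta$-thinness via the Gromov product) in order to treat values of $r$ that are not integer multiples of $\delta$ and to absorb the rounding into the additive constant $-2$ in the exponent.
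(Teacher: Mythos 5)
Your overall strategy is the standard one: the paper itself gives no proof of this proposition (it is quoted from \cite{alonso1990notes}, Theorem 2.19, with constants read off from the proof there), and the proof in that source is precisely the dyadic bisection argument you outline. Your key claim is well-posed (quantified over arbitrary choices of geodesics to the endpoints, which is what lets you reuse the same midpoint geodesic $\mu$ for both halves), and the induction itself is correct: the base case, the bisection step, and the pull-back from $2\delta$ to $\delta$ at the cost of one further $\delta$ of radius all check out, so the claim ``$|\sigma|\le 2^n$ implies $d\le\delta$ at radius $\rho-(n+2)\delta$'' is indeed provable. Your direct application of it also works whenever $r/\delta-2$ happens to be a non-negative integer. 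One minor caveat: your base case uses $1\le\delta$, which is a standing assumption of this paper but is not implied by the proposition's phrase ``any $\delta$-hyperbolic space''.

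The genuine gap is in your step (iii): the rounding loss cannot be ``absorbed into the additive constant $-2$'', because that constant is part of the statement being proved, and the numbers do not close. Trace the budget: to contradict $d(\gamma(R),\gamma'(R))>\delta$ you must obtain $\delta$-closeness at some radius $s\ge R+\delta/2$ (your quasi-monotonicity, via the Gromov product, costs $\delta/2$), so you need an integer $n$ with $\rho-(n+2)\delta\ge R+\delta/2$, i.e.\ $n\le r/\delta-5/2$. The contrapositive of your claim then yields only $|\alpha|>2^{\lfloor r/\delta-5/2\rfloor}\ge 2^{r/\delta-7/2}$, and since $\lfloor r/\delta-5/2\rfloor< r/\delta-2$ always, no admissible $n$ ever reaches $e(r)=2^{r/\delta-2}$: generically you prove the proposition only with $e$ weakened by a factor of $2^{3/2}$. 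The fix is already implicit in your own base case, where you establish closeness at radius $\rho-\delta/2$ and then discard this down to $\rho-2\delta$ to fit your claim. Keep the sharper version: the induction then gives $\delta$-closeness at radius $\rho-(n+\tfrac{1}{2})\delta$, and taking $n=\lfloor r/\delta\rfloor-1$ one has both $\rho-(n+\tfrac{1}{2})\delta\ge R+\delta/2$ and $n>r/\delta-2$, whence $|\alpha|>2^n>2^{r/\delta-2}$ as required. (The leftover range $1\le r<\delta$, where this $n$ is negative, can be handled directly: if the Gromov product $(\gamma(R+r),\gamma'(R+r))_x$ were at least $R$, thinness would force $d(\gamma(R),\gamma'(R))\le\delta$; hence $d(\gamma(R+r),\gamma'(R+r))>2r\ge 2>e(r)$, and $|\alpha|$ is at least this distance.)
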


\begin{proposition}\cite[Proposition 3.3]{alonso1990notes}
\label{prop:qgeo_close_to_geo}
In a hyperbolic space with divergence function $e$, given constants $\lambda \ge 1$ and $\epsilon \ge 0$, there exists $D = D(\lambda, \epsilon, e)>0$
such that if $\alpha$ is a $(\lambda,\epsilon)$-quasigeodesic and $\gamma$ is a geodesic starting and ending at the same points as $\alpha$ then every point on $\gamma$ is within a distance $D$ of a point on $\alpha$. It suffices to take $D$ satisfying $e(\frac{D - e(0)}{2}) \ge 4D + 6\lambda D + \epsilon$.
\end{proposition}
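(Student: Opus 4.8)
The plan is to bound the maximum distance from a point of $\gamma$ to the image of the quasigeodesic $\alpha$ by arguing that, whenever this distance is large, one can build a path that connects two points of $\gamma$, avoids a large ball, and yet is short — contradicting the exponential lower bound supplied by the divergence function of Proposition~\ref{prop:hyperbolic_divergence}. Since $t \mapsto d(\gamma(t), \alpha)$ is $1$-Lipschitz on the compact parameter interval and vanishes at the endpoints (which lie on $\alpha$), it attains a maximum $h$ at some interior point $p = \gamma(t_0)$, and the goal is to show $h \le D$. Working in the generic case where $\gamma$ extends a distance $2h$ on either side of $p$ (the cases where $p$ lies within $2h$ of an endpoint are only easier, since that endpoint already lies on $\alpha$), set $u = \gamma(t_0 - 2h)$ and $w = \gamma(t_0 + 2h)$, and choose points $u',w'$ on $\alpha$ with $d(u,u') \le h$ and $d(w,w') \le h$, which exist by maximality of $h$.

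The key construction is the path $P$ from $\gamma(t_0 - h)$ to $\gamma(t_0 + h)$ obtained by concatenating the subsegment of $\gamma$ from $\gamma(t_0 - h)$ out to $u$, the geodesic $\path{u}{u'}$, the subpath of $\alpha$ from $u'$ to $w'$, the geodesic $\path{w'}{w}$, and the subsegment of $\gamma$ from $w$ back to $\gamma(t_0 + h)$. Two facts drive the argument. First, $P$ avoids the open ball of radius $h$ about $p$: the $\gamma$-subsegments lie at distance between $h$ and $2h$ from $p$; each connector $\path{u}{u'}$, $\path{w'}{w}$ has length $\le h$ and starts at distance $2h$ from $p$, so cannot reach within $h$ of $p$; and every point of $\alpha$ is at distance $\ge h$ from $p$ by the definition of $h$. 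Second, $P$ is short: the two $\gamma$-pieces and the two connectors each have length $\le h$, while the $\alpha$-piece, being a subpath of a $(\lambda,\epsilon)$-quasigeodesic, has length at most $\lambda\, d(u',w') + \epsilon \le 6\lambda h + \epsilon$, using $d(u',w') \le d(u',u) + d(u,w) + d(w,w') \le h + 4h + h = 6h$. Hence $|P| \le 4h + 6\lambda h + \epsilon$.

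I would then apply Proposition~\ref{prop:hyperbolic_divergence} with basepoint $p$ and the two opposite geodesic rays $\gamma_1, \gamma_2$ of $\gamma$ issuing from it, taking $R$ minimal with $2R > \delta$ (so the rays have diverged at parameter $R$, since $d(\gamma_1(R),\gamma_2(R)) = 2R$) and $r = h - R$, so that $R + r = h$ and the endpoints of $P$ are exactly $\gamma_1(R+r)$ and $\gamma_2(R+r)$; the integrality of $R,r$ costs only an $O(1)$ adjustment absorbed into the constants. As $P$ lies outside the ball of radius $R + r = h$, the divergence bound gives $|P| > e(r) = e(h - R) \ge e\big(\tfrac{h - \delta}{2}\big)$. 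Combining this with the length estimate yields $e\big(\tfrac{h-\delta}{2}\big) < 4h + 6\lambda h + \epsilon$; because $e$ grows exponentially while the right-hand side is linear in $h$, any $D$ satisfying $e(\tfrac{D - e(0)}{2}) \ge 4D + 6\lambda D + \epsilon$ forces $h \le D$, as required.

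The main obstacle, and the reason for placing $u,w$ at distance $2h$ rather than $h$ from $p$, is precisely the ball-avoidance requirement: the connectors from $\gamma$ to $\alpha$ could otherwise plunge toward $p$ and enter the forbidden ball, invalidating the divergence estimate. Choosing $u,w$ a distance $2h$ out guarantees that a connector of length $\le h$ stays at distance $\ge h$ from $p$, which is exactly what reconciles the short path with the exponential lower bound and pins down the linear-versus-exponential comparison defining $D$.
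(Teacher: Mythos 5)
The paper gives no proof of this proposition---it is quoted from \cite{alonso1990notes}, with the remark that the constants are extracted from the proofs there---and your argument is precisely that standard proof, reproduced correctly: taking the point $p$ of $\gamma$ at maximal distance $h$ from $\alpha$, the two connectors of length $\le h$ anchored at $\gamma(t_0 \pm 2h)$, the $\alpha$-subpath of length $\le 6\lambda h + \epsilon$, and the two extra $\gamma$-segments are exactly the decomposition whose total length $4h + 6\lambda h + \epsilon$ produces the constant in the statement, and the divergence function is applied to the two halves of $\gamma$ at $p$ with $R+r = h$ just as intended. The proof is correct; the one informality (reading a single inequality at $D$ as forcing $h \le D$, which strictly requires the exponential to dominate the linear bound for all values beyond $D$ as well) is inherited verbatim from the statement being proved and is harmless in the paper's application, where $D$ is chosen far past the crossover point.
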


These results can be used to find a power $n$ of an infinite order word $w$
such that $|(w^n)_C|_G$ is large.

\begin{proposition}
\label{prop:make_long}
Let $w$ be a geodesic word of infinite order with $|w| \le 2L$.
Then $|(\slex(w^M))_C|_G > 2L$.
\end{proposition}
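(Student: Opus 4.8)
The plan is to avoid estimating $|(\slex(w^M))_C|_G$ directly and instead to bound it below by a conjugacy invariant. Write $W := \slex(w^M)$ and $g := \tau(w^M)$, and recall that if $W = W_L W_R$ with $|W_L| = \floor{|W|/2}$ then $\tau(W_C) = \tau(W_R)\tau(W_L) = \tau(W_L)^{-1} g\, \tau(W_L)$. Thus $W_C$ represents an honest conjugate of $g$, and it suffices to bound below the length of every element in the conjugacy class of $g$. The natural invariant for this is the stable translation length
\[ \ell(h) := \lim_{k \to \infty} \frac{|h^k|_G}{k}, \]
which exists by subadditivity of $k \mapsto |h^k|_G$ (Fekete's lemma).

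The first step is to record two elementary properties of $\ell$. Since $|h^k|_G \le k|h|_G$ we get $\ell(h) \le |h|_G$ for every $h$; and since $|x^{-1}h^kx|_G$ differs from $|h^k|_G$ by at most $2|x|_G$ for all $k$, the limit defining $\ell$ is unchanged under conjugation, so $\ell$ is a conjugacy invariant. Applying both facts to $h = \tau(W_C)$ gives
\[ |W_C|_G = |\tau(W_C)|_G \ge \ell(\tau(W_C)) = \ell(g), \]
so the whole problem reduces to showing $\ell(g) > 2L$.

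The second step computes $\ell(g)$ from Proposition~\ref{prop:inf_order_qgeo}. For every $k \ge 1$ the word $w^{kM}$ is a positive power of $w$, hence labels a $(\lambda,\epsilon)$-quasigeodesic with $\lambda = |w|V$ and $\epsilon = 2|w|^2V^2 + 2|w|V$; comparing its arc length $kM|w|$ with the distance $|w^{kM}|_G$ between its endpoints yields $|w^{kM}|_G \ge (kM|w| - \epsilon)/\lambda$. Since $g^k = \tau(w^{kM})$, dividing by $k$ and letting $k \to \infty$ makes the additive constant $\epsilon$ irrelevant and gives
\[ \ell(g) = \lim_{k\to\infty}\frac{|w^{kM}|_G}{k} \ge \frac{M|w|}{\lambda} = \frac{M|w|}{|w|V} = \frac{M}{V} = 20\delta^2V^2L^2, \]
which exceeds $2L$ since $\delta, V \ge 1$ and $L \ge 36$. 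Combined with the previous display this proves $|W_C|_G > 2L$.

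The point to be careful about — and the reason I would route through $\ell$ rather than argue directly — is that the quasigeodesic constant $\lambda = |w|V$ supplied by Proposition~\ref{prop:inf_order_qgeo} is too weak to compare, say, $|w^{2M}|_G$ with $|w^M|_G$ at a single finite exponent: the multiplicative slack grows with $|w|$ and can swamp the gap one is trying to produce. Passing to the stable length is exactly what neutralises this, because both $\epsilon$ and the $|w|$-factor in $\lambda$ cancel in the limit, leaving the clean bound $M/V$. A pleasant by-product is that this argument needs only Proposition~\ref{prop:inf_order_qgeo}; the divergence estimates of Propositions~\ref{prop:hyperbolic_divergence} and~\ref{prop:qgeo_close_to_geo} are not required for this particular inequality.
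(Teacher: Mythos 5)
Your proof is correct, but it takes a genuinely different route from the paper's. The paper argues geometrically: it uses the divergence function of Proposition~\ref{prop:hyperbolic_divergence} to verify that the explicit constant $D = 1000\delta^2 LV$ satisfies the inequality required in Proposition~\ref{prop:qgeo_close_to_geo}, then builds a quadrilateral in which the mid-vertex $\evaluate{u_L}$ of a geodesic for $u = \slex(w^M)$ is pushed within $D$ of a point on the quasigeodesic labelled by a power of $w$, translates that point one $w^M$-block along the quasigeodesic, and bounds the distance $d(\evaluate{u_L}, \evaluate{uu_L})$ (which equals $|u_C|_G$) from below by the quasigeodesic inequality minus the $2D$ error. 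You instead observe that $\evaluate{W_C}$ is an honest conjugate of $g = \evaluate{w^M}$ and bound the length of \emph{every} conjugate at once via the stable translation length $\ell(g)$, which you lower-bound by $M/V$ using only Proposition~\ref{prop:inf_order_qgeo} applied to $w^{kM}$ and a passage to the limit; the additive constant $\epsilon$ and the factor $|w|$ in $\lambda$ cancel exactly as you say. Each step checks out: $\ell$ exists by Fekete, $\ell(h) \le |h|_G$, conjugation perturbs $|h^k|_G$ by at most $2|x|_G$ so $\ell$ is a conjugacy invariant, and $M/V = 20\delta^2V^2L^2 > 2L$. What your route buys is economy: it renders Propositions~\ref{prop:hyperbolic_divergence} and~\ref{prop:qgeo_close_to_geo} unnecessary (the paper uses them nowhere else), it invokes hyperbolicity only through Proposition~\ref{prop:inf_order_qgeo}, and it yields a slightly cleaner numerical bound ($20\delta^2V^2L^2$ versus the paper's $LV(12\delta^2LV - 4 - 2000\delta^2)$). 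What the paper's route buys is that it stays entirely within the explicit thin-triangle and quasigeodesic-stability toolkit developed for Section~\ref{section:inf_order}, at the cost of the divergence-function computation; as a proof of this particular proposition, yours is shorter and equally rigorous.
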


\begin{proof}
By Proposition \ref{prop:hyperbolic_divergence}, the function $e(0)=\delta$, $e(l)=2^{\frac{l}{\delta}-2}$ for $l > 0$ is a divergence function for $\Gamma$. Proposition \ref{prop:inf_order_qgeo} implies that $w^{2M}$ labels a $(\lambda, \epsilon)$-quasigeodesic $\alpha$ starting at the identity, where $\lambda = |w|V$ and $\epsilon = 2|w|^2V^2 + 2|w|V$.

We show now that $D := 1000\delta^2LV$ is sufficient to solve the equation in
Proposition \ref{prop:qgeo_close_to_geo} with these parameters.
Since $\exp(x) > x^3/6$ for all $x>0$ and $3 \log 2 > 2$, we find that
\[e(\frac{D-\delta}{2}) =
\frac{\exp(500\delta L V \log 2)}{4\sqrt{2}} > 
\frac{1000^3 \delta^3 L^3 V^3}{648 \sqrt{2}} > 10^6\delta^2L^2V^2.\]
Since $|w| \le 2L$, we have
$$4D + 6 \lambda D + \epsilon = 4D + 6 |w| VD + 2|w|^2V^2 + 2|w|V \le
  4D +12LVD + 8L^2V^2 + 4LV.$$
By considering a shortlex reduced word of length at least $2\delta+1$ defining a
path starting at the origin, we see that $V \ge 4\delta+1 \ge 5$, and $L \ge 36$,
so $LV\ge 180$. But we also have $LV \le D/1000$, so
$$4D +12LVD + 8L^2V^2 + 4LV \le 13LVD = 13000\delta^2L^2V^2,$$
and hence $e(\frac{D-\delta}{2}) > 4D + 6 \lambda D + \epsilon$, as claimed.

Recall that $M =20\delta^2 V^3L^2 = V^2LD/50$.
Let $u := \slex(w^M)$ and let $\gamma$ be a geodesic path starting at the
identity vertex $\vertex{x}$ and ending at the vertex
$\vertex{y} := \evaluate{u}$. Let $\alpha$ be the path between these vertices
whose label is $w^M$. By Proposition \ref{prop:qgeo_close_to_geo}, the vertex
$\vertex{p} := \evaluate{u_L}$ on $\gamma$ lies within $D$ of some vertex
$\vertex{p'}$ on $\alpha$.

Now let $a$ be the label of the path along $\alpha$ between $\vertex{x}$ and
$\vertex{p'}$. Let $\vertex{q}$ be the vertex representing $uu_L$ and
$\vertex{q'}$ the vertex representing $ua$. See Figure \ref{figure:make_long}.

\begin{figure}
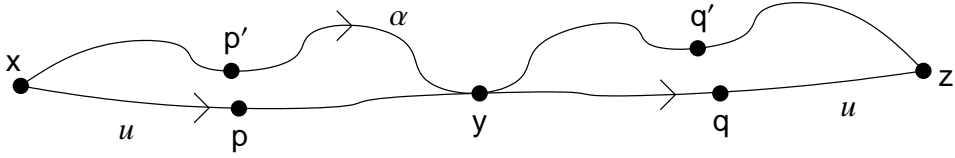

\begin{center}
\input make_long.pstex_t
\caption{Cutting across a long quasigeodesic}
\label{figure:make_long}
\end{center}
\end{figure}

Observe that
$$
        |u_C|
 = d(\vertex{p}, \vertex{q})
\ge d(\vertex{p'},\vertex{q'}) - 2D
\ge \frac{d_\gamma(\vertex{p'}, \vertex{q'})}{\lambda} - \epsilon - 2D 
 =  \frac{|w|M}{\lambda} - \epsilon - 2D.$$

Substituting the values of $M$, $D$, $\lambda$ and $\epsilon$, and using
$|w| \le 2L$, $V \ge 5$, $LV \ge 180$, we have
\begin{eqnarray*}
|u_C| & \ge & LVD/50 - 2|w|^2V^2 - 2|w|V -2D
\ge LV(20\delta^2 LV -8LV -4 -2000\delta^2) \\
& \ge & LV(12\delta^2 LV  -4 -2000\delta^2) > 2L.
\end{eqnarray*}
\end{proof}

The value of $M$ used above is of course by no means optimal (it is probably suboptimal by orders of magnitude) but serves to illustrate that such an explicit bound can be found.

By Proposition \ref{prop:make_long}, short infinite order words can be raised to large powers to obtain words upon which Proposition \ref{prop:eh_solve_conj} may be used. It is useful to confirm that words that are already appropriate inputs stay appropriate when raised to the power of $M$.

\begin{proposition}
\label{prop:long_stays_long}
Suppose that $w$ is a geodesic word, and $|w_C|_G > 2L$. If $n \ge L$ then $|(\slex((w_C)^n))_C| > 2L$. In particular, $|(\slex((w_C)^M))_C| > 2L$.
\end{proposition}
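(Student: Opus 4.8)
The plan is to pass to the element $g := \tau(w_C)$ together with its shortlex representative $v := \slex(w_C)$, and to read off $|u_C|_G$, where $u := \slex((w_C)^n)$, as the displacement of the midpoint of a geodesic from $\vertex{1}$ to $g^n$. First I would note that, since $v =_G w_C$, we have $u = \slex(v^n)$, so that $u$ — and hence $u_C$ — depends only on $g^n$; thus I may compute throughout with $v$ in place of $w_C$. Because $|v| = |w_C|_G > 2L$, Proposition~\ref{prop:short_conj} supplies that all positive powers of $v$ label $L$-local $(1,2\delta)$-quasigeodesics, and subpaths and left-translates of such paths inherit the same property.

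The identity I would establish is $|u_C|_G = d(\vertex{p}, g^n\vertex{p})$, where $\vertex{p} := \tau(u_L)$ is the vertex at distance $\floor{|u|/2}$ along the geodesic $u$ from $\vertex{1}$ to $g^n$. Indeed, writing $u = u_L u_R$ one has $\tau(u_L)^{-1}\tau(uu_L) = u_R u_L = u_C$ while $\tau(uu_L) = g^n\vertex{p}$, so $\vertex{p}$ is essentially the midpoint of $[\vertex{1}, g^n]$ and $g^n\vertex{p}$ is the midpoint of $[g^n, g^{2n}]$. The task therefore reduces to bounding the distance between these two midpoints from below.

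To do this I would place both midpoints against the $L$-local $(1,2\delta)$-quasigeodesic $\beta$ labelled $v^{2n}$, which runs from $\vertex{1}$ through $g^n$ to $g^{2n}$. Applying Proposition~\ref{prop:nice_qgeo} to the subpaths $\beta|_{[0,n|v|]}$ and $\beta|_{[n|v|,2n|v|]}$ against the geodesics $[\vertex{1},g^n]$ and $[g^n,g^{2n}]$ yields parameters $t_1 \in [0,n|v|]$ and $t_2 \in [n|v|,2n|v|]$ with $d(\vertex{p},\beta(t_1)) \le 4\delta$ and $d(g^n\vertex{p},\beta(t_2)) \le 4\delta$. Combining the crude bound $d(\vertex{1},\beta(t)) \le t$ with the length comparison $d(\vertex{1},\beta(t)) \ge \tfrac{7}{17}t$ of Proposition~\ref{prop:nice_qgeo}, and writing $N := |g^n|_G \le n|v|$, I would pin these down to $t_1 \le \tfrac{17}{7}(N/2 + 4\delta)$ and $t_2 \ge n|v| + N/2 - 4\delta - 1$, giving $t_2 - t_1 \ge \tfrac{2}{7}n|v| - O(\delta)$. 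A last application of the length comparison to $\beta|_{[t_1,t_2]}$ then produces $|u_C|_G = d(\vertex{p},g^n\vertex{p}) \ge \tfrac{7}{17}(t_2-t_1) - 8\delta \ge \tfrac{2}{17}n|v| - O(\delta)$, which exceeds $2L$ once $n \ge L$ and $|v| > 2L$ (recall $L \ge 36$); the ``in particular'' claim follows since $M \ge L$.

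The main obstacle is the parameter estimate $t_2 - t_1 \ge \tfrac{2}{7}n|v| - O(\delta)$. The naive route — bounding $d(\vertex{p},g^n\vertex{p})$ by a triangle inequality through $\vertex{1}$ or $g^{2n}$ — fails, because $g^n\vertex{p}$ sits roughly at $g^{3n/2}$ rather than at an endpoint, and the estimate is then swamped by the $\tfrac{7}{17}$ loss in the quasigeodesic/geodesic length comparison (it can even come out negative). Working along $\beta$ and tracking arc-length parameters is precisely what converts ``$g^n\vertex{p}$ lies far past $\vertex{p}$ along the quasigeodesic'' into a genuine lower bound, and the inequality $N \le n|v|$ is what lets the gain $n|v|$ in $t_2 - t_1$ survive the $\tfrac{5}{7}$ that is lost. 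One must also check that every subpath fed into Proposition~\ref{prop:nice_qgeo} has length at least $L$, which holds comfortably since each relevant length is a fixed fraction of $n|v| \ge 2L^2$.
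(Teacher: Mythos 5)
Your proof is correct, and its skeleton is the same as the paper's: you reduce $|(\slex((w_C)^n))_C|_G$ to the distance $d(\vertex{p}, g^n\vertex{p})$ between the midvertex $\vertex{p}$ of a geodesic from $\vertex{1}$ to $g^n$ and its $g^n$-translate, and you compare both points against the $L$-local $(1,2\delta)$-quasigeodesic $\beta$ labelled $\slex(w_C)^{2n}$ using Propositions \ref{prop:short_conj} and \ref{prop:nice_qgeo}. The genuine difference is in the key estimate. The paper projects only $\vertex{p}$, obtaining $\vertex{p'}=\beta(t_1)$ with $d(\vertex{p},\vertex{p'})\le 4\delta$, and then takes the second point to be the translate $g^n\vertex{p'}=\beta(t_1+n|w_C|_G)$: by translation invariance this is automatically within $4\delta$ of $g^n\vertex{p}$, and the arc-length separation of the two projected points is \emph{exactly} $n|w_C|_G$, so a single application of the $\tfrac{7}{17}$ comparison gives $d(\vertex{p},g^n\vertex{p}) \ge \tfrac{7}{17}n|w_C|_G - 8\delta$. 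You instead project the two midvertices independently and then recover the separation $t_2-t_1$ by hand, via $t_1 \le \tfrac{17}{7}(N/2+4\delta)$ and $t_2 \ge n|v|+N/2-4\delta-1$; this works (your use of $N \le n|v|$ to survive the $\tfrac{5}{7}$ loss is the right accounting), but costs extra bookkeeping and a weaker constant, $\tfrac{2}{17}n|v|-O(\delta)$ rather than $\tfrac{7}{17}$, which is still comfortably above the $2L$ threshold. In short, the equivariant choice of the second projection point is the device that makes the ``main obstacle'' you describe disappear. Two minor remarks: your blanket claim that every subpath fed into Proposition \ref{prop:nice_qgeo} has length at least $L$ is not guaranteed for $\beta|_{[0,t_1]}$, but if $t_1 < L$ the desired upper bound on $t_1$ holds trivially, so nothing breaks; and, like the paper, you apply Proposition \ref{prop:short_conj} to a word that is assumed only geodesic rather than shortlex reduced, so you inherit the same slight mismatch of hypotheses.
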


\begin{proof}
Let $u := \slex((w_C)^n)$, and let $\alpha$ be the path starting at
$\vertex{x} := \vertex{1}$ labelled by $\slex(w_C)^{2n}$.
Let $\vertex{y} := \evaluate{u}$ and $\vertex{z} := \evaluate{u^2}$. Now let $\vertex{p} := \evaluate{u_L}$ and let $\vertex{q} := \evaluate{uu_L}$ so that $\vertex{p}$ and $\vertex{q}$ are mid-vertices on the shortlex geodesic paths $\path{\vertex{x}}{\vertex{y}}$ and $\path{\vertex{y}}{\vertex{z}}$ respectively and $u_C$ labels a path from $\vertex{p}$ to $\vertex{q}$. Figure \ref{figure:make_long} provides a suitable diagram once again.

Note that $\alpha$ is an $L$-local $(1, 2\delta)$-quasigeodesic by Proposition \ref{prop:short_conj}, so Proposition \ref{prop:nice_qgeo} applies. Then there is a vertex $\vertex{p'} = \vertex{x} \cdot (w_C)^n(i)$ for some $i$, with $d(\vertex{p'}, \vertex{p}) \le 4\delta$. Let $\vertex{q'} := \vertex{y} \cdot (w_C)^n(i)$ so that $d(\vertex{q'}, \vertex{q}) \le 4\delta$ also. Since $d_\alpha(\vertex{p'}, \vertex{q'}) = n|w_C|_G \ge L$, Proposition \ref{prop:nice_qgeo} also gives a lower bound on $d(\vertex{p'}, \vertex{q'})$ as follows:
\[
        d(\vertex{p}, \vertex{q})
 =^{8\delta}  d(\vertex{p'},\vertex{q'})
\ge \frac{7}{17}d_\alpha(\vertex{p'}, \vertex{q'})
 =  \frac{7}{17}n|w_C|_G 
 >  \frac{14}{17}Ln.
\]

But then
\[
        |(\slex((w_C)^n))_C|
  =   |u_C|
  =  d(\vertex{p}, \vertex{q})
> \frac{14}{17}Ln - 8\delta
\ge \frac{14}{17}L \times 34\delta - 8\delta
\ge 2L
\] as required.
\end{proof}

By the above two results $|(\slex((u_C)^M))_C|_G > 2L$ for any infinite order
geodesic word $u$. Combining this fact with Proposition \ref{prop:eh_solve_conj},
we get:

\begin{proposition}
\label{prop:conj_candidates}
There exists an algorithm which, given geodesic infinite order words
$u$ and $v$, computes words $a$ and $y$, and a set $S$ of at most $V$ words,
such that $y$ is shortlex straight and $u^g =_G v$ implies that $g =_G ay^ns$ for some $s \in S$ and $n \in \mathbb{Z}$. All output words have length in $O(|u| + |v|)$ and the algorithm runs in time $O(|u| + |v|)$.
\end{proposition}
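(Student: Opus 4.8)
The plan is to use Propositions~\ref{prop:make_long} and~\ref{prop:long_stays_long} to replace $u$ and $v$ by conjugates of powers having long cyclic conjugates, to feed these into Proposition~\ref{prop:eh_solve_conj}, and then to conjugate the resulting description of candidate elements back into one valid for $u$ and $v$.

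First I would set $\tilde u := \slex((u_C)^M)$ and $\tilde v := \slex((v_C)^M)$. Since $\tau(u_C) = \tau(u_L)^{-1}\tau(u)\tau(u_L)$ is conjugate to the infinite order element $\tau(u)$, it again has infinite order, so the remark preceding this proposition applies and gives $|\tilde u_C|_G > 2L$; explicitly, if $|u_C|_G > 2L$ this is Proposition~\ref{prop:long_stays_long} with $n = M \ge L$, and otherwise one passes to a geodesic representative $w$ of $\tau(u_C)$, which has $|w| \le 2L$ and leaves $\slex((u_C)^M) = \slex(w^M)$ unchanged, so that Proposition~\ref{prop:make_long} applies. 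The same holds for $\tilde v$. Both words are shortlex reduced, and since $M \in O(1)$ they have length in $O(|u|+|v|)$ and, using Lemma~\ref{lemma:short_lex}, can be computed in time $O(|u|+|v|)$.

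Now I would apply Proposition~\ref{prop:eh_solve_conj} to the pair $\tilde u, \tilde v$, obtaining a word $a'$, a shortlex straight word $y$, and a set $S'$ of at most $V$ words, such that $\tilde u^{g'} =_G \tilde v$ implies $g' =_G a'y^ns'$ for some $s' \in S'$ and $n \in \mathbb{Z}$. It remains to transfer this to $u$ and $v$. By construction $\tilde u =_G u_L^{-1}u^Mu_L$ and $\tilde v =_G v_L^{-1}v^Mv_L$. Suppose $u^g =_G v$; then $(u^M)^g =_G v^M$, and setting $g' := u_L^{-1}gv_L$ one computes
\[
\tilde u^{g'} =_G v_L^{-1}g^{-1}u_L \cdot u_L^{-1}u^Mu_L \cdot u_L^{-1}gv_L =_G v_L^{-1}v^Mv_L =_G \tilde v .
\]
Hence $g' =_G a'y^ns'$ for some $s' \in S'$, and so $g =_G u_La'y^ns'v_L^{-1}$. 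Taking $a := u_La'$ and $S := \{\, s'v_L^{-1} : s' \in S' \,\}$ gives the required description, with $y$ unchanged and $|S| = |S'| \le V$; all output words plainly have length in $O(|u|+|v|)$, and the extra concatenations cost only $O(|u|+|v|)$.

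I expect the only real subtlety to be the bookkeeping in the translation step: one must keep the half-cyclic factors $u_L$ and $v_L$ in the correct positions so that the conjugator stays in the form $ay^ns$, and one must check that passing from $u,v$ to their $M$-th powers loses no conjugators. The latter is automatic here, since the map $g \mapsto u_L^{-1}gv_L$ is a bijection of $G$ and we only need the forward implication ``$u^g =_G v \Rightarrow g \in \{ay^ns\}$''. Everything else is a direct assembly of the quoted results, and the constant factor $M$ ensures the linear time and length bounds are preserved.
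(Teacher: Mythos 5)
Your proposal is correct and takes essentially the same route as the paper's own proof: replace $u_C$ and $v_C$ by their $M$-th powers, use Propositions~\ref{prop:make_long} and~\ref{prop:long_stays_long} to verify the hypotheses of Proposition~\ref{prop:eh_solve_conj}, and then translate the resulting description back via $g' = u_L^{-1}gv_L$, setting $a := u_La'$ and $S := \{s'v_L^{-1} : s' \in S'\}$. Your explicit case analysis (passing to a geodesic representative of $\tau(u_C)$ when $|u_C|_G \le 2L$) merely spells out more carefully the step the paper dispatches with the remark preceding the proposition.
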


\begin{proof}
Start by replacing $u$ and $v$ by their shortlex reductions $\pi(u)$, $\pi(v)$.
Let $u' := \pi((u_C)^M)$ and $v' := \pi((v_C)^M)$.
Then $|u'_C|_G > 2L$ and $|v'_C|_G > 2L$ so applying Proposition \ref{prop:eh_solve_conj} yields words $a'$ and $y'$ and a set $S'$ of words,
such that $y'$ is shortlex straight and $u'^{g'} =_G v'$ implies that $g' := a'y'^ns'$ for some $s' \in S'$. If $u^g =_G v$ then $u'^{u_L^{-1}g} =_G v'^{v_L^{-1}}$ so $u_L^{-1}gv_L =_G a'y'^ns'$ for some $s' \in S'$ and, after re-arranging, $g =_G u_La'y'^ns'v_L^{-1}$.

It suffices, then, to take $a := u_La'$, $y := y'$ and $S := \{ s'v_L^{-1} : s' \in S' \}$. Since $M$ is in $O(1)$, finding these values takes time $O(|u'| + |v'|) = O(|u| + |v|)$ and the proposition is proved.
\end{proof}

\begin{corollary}
\label{corollary:test_fin_ord}
There is an algorithm \textsc{TestInfOrder} that runs in time $O(|w|)$,
which tests whether an input word $w$ has infinite order.
\end{corollary}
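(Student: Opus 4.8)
The plan is to reduce the test for infinite order to a single length comparison that is evaluated in linear time via the shortlex algorithm of Lemma \ref{lemma:short_lex}. First I would replace $w$ by its shortlex reduced form $u := \slex(w)$, a geodesic word representing $\tau(w)$, computed in time $O(|w|)$. The key claim is the equivalence
$$ \tau(w) \text{ has infinite order} \iff |(\slex((u_C)^M))_C|_G > 2L. $$
Granting this, \textsc{TestInfOrder} runs as follows: compute $u$; form the word $(u_C)^M$ by concatenating $M$ copies of the half-cyclic conjugate $u_C$; apply Lemma \ref{lemma:short_lex} to obtain $z := \slex((u_C)^M)$; apply the lemma once more to the half-cyclic conjugate $z_C$ to obtain $|z_C|_G = |\slex(z_C)|$; and return ``infinite order'' precisely when this value exceeds $2L$.

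For the running time, recall that $M$ and $L$ depend only on $G$ and $X$ and are therefore constants. Thus the word $(u_C)^M$ has length $M|u| = O(|w|)$, each of the bounded number of calls to the shortlex algorithm runs in time $O(|w|)$ by Lemma \ref{lemma:short_lex}, and the final comparison against the constant $2L$ takes time $O(1)$. Hence the whole procedure runs in time $O(|w|)$, as required.

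Correctness is exactly the claimed equivalence. For the forward direction, if $\tau(w)$ has infinite order then so does its conjugate $\tau(u_C)$, and I would invoke the combined conclusion of Propositions \ref{prop:make_long} and \ref{prop:long_stays_long}, namely that $|(\slex((u_C)^M))_C|_G > 2L$ for every infinite order geodesic word $u$: when $|u_C|_G \le 2L$ this follows by applying Proposition \ref{prop:make_long} to the geodesic representative $\slex(u_C)$, and when $|u_C|_G > 2L$ it follows by applying Proposition \ref{prop:long_stays_long} to $u$ directly. For the converse, suppose $|(\slex((u_C)^M))_C|_G > 2L$. By the observation following Proposition \ref{prop:nice_qgeo}, any word $s$ with $|s_C|_G > 2L$ has infinite order, so $\slex((u_C)^M)$ has infinite order; since this word represents $(u_C)^M$, a power of the conjugate $u_C$ of $\tau(w)$, the element $\tau(w)$ must itself have infinite order, as a power of a finite order element is again of finite order.

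The substantive content is already delivered by Propositions \ref{prop:make_long} and \ref{prop:long_stays_long}, so the only genuinely new ingredient is the converse direction, and there the one point worth checking is that a finite order element cannot accidentally pass the test. This is immediate from the fact that $|s_C|_G > 2L$ forces $s$ to have infinite order: no power of a finite order element can have a shortlex form satisfying the inequality. The remaining work is bookkeeping, namely confirming that building $(u_C)^M$ and calling the shortlex algorithm a bounded number of times keeps the total length, and hence the running time, within $O(|w|)$; this holds precisely because $M$ is a constant.
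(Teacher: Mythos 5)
Your proposal is correct and follows essentially the same route as the paper: shortlex-reduce $w$, test whether $|(\slex((w_C)^M))_C|_G > 2L$, use Propositions \ref{prop:make_long} and \ref{prop:long_stays_long} (split on whether $|w_C|_G \le 2L$) for the forward direction, and the observation after Proposition \ref{prop:nice_qgeo} (equivalently Proposition \ref{prop:short_conj}) for the converse, with the $O(|w|)$ bound coming from $M$ and $L$ being constants. Your write-up merely makes explicit the case analysis and the converse argument that the paper states tersely.
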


\begin{proof}
First replace $w$ with $\slex(w)$. Now if $|(\slex((w_C)^M))_C|_G > 2L$ then
$(w_C)^M$ and therefore $w$ is of infinite order by Proposition \ref{prop:short_conj} and the algorithm returns \textsc{True}.
If not, $w$ cannot be of infinite order by Proposition \ref{prop:make_long} or Proposition \ref{prop:long_stays_long}, and the algorithm returns \textsc{False}.
Since $|(w_C)^M| = M|w|$, this test takes time at worst $O(|w|)$.
\end{proof}

Recall that our aim is to test the lists $A = (a_1, \ldots, a_m)$ and
$B = (b_1, \ldots, b_m)$ of words for conjugacy in $G$, and we are assuming
in this section that $a_1$ has infinite order. By the above corollary, we may
assume also that $b_1$ has infinite order, since otherwise $A$ and $B$ cannot
be conjugate.

By applying Proposition \ref{prop:conj_candidates} to $a_1$ and $b_1$,
and then replacing $A$ by $A^a$ and $B$ by $B^{s^{-1}}$ for each $s$ in
turn, we may effectively assume that the conjugating element has the form $y^n$.
This motivates the next subsection, which investigates the conjugation of
single words by straight powers.

\subsection{Conjugating by a power of a straight word}

In this subsection, suppose that geodesic words $g$ and $y$ are given, and that $y$ is straight. The aim is to find a description of the conjugates $g^{y^n}$ that allows, for any $g' \in G$, those values $n \in \mathbb{Z}$
for which $g' =_G g^{y^n}$ to be efficiently found.

The following preliminary result is true of general hyperbolic graphs, and will be specialised to the situation described above afterwards.

\begin{figure}
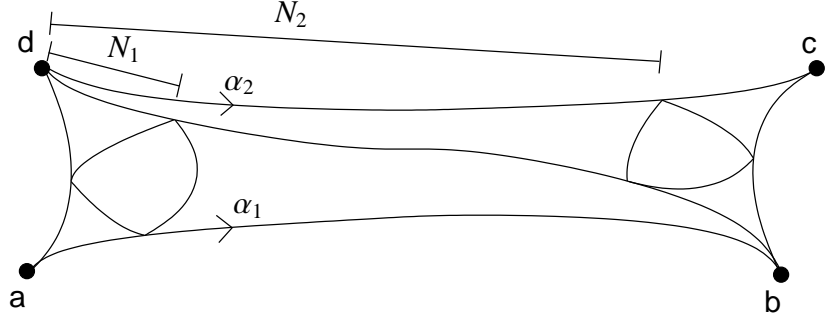

\begin{center}
\input thin_quad.pstex_t
\caption{A geodesic quadrilateral}
\label{figure:thin_quad}
\end{center}
\end{figure}

\begin{figure}
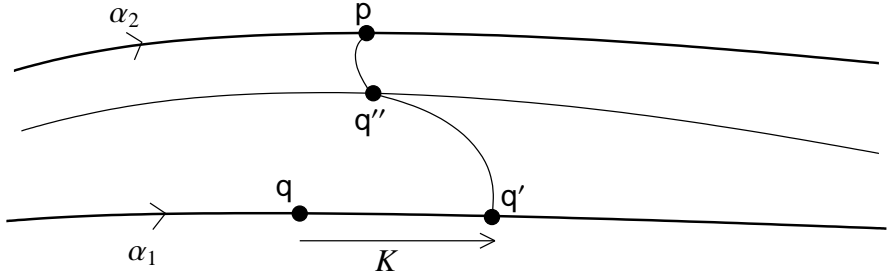

\begin{center}
\input thin_quad_zoom.pstex_t
\caption{A thin part of a quadrilateral}
\label{figure:thin_quad_zoom}
\end{center}
\end{figure}

\begin{figure}
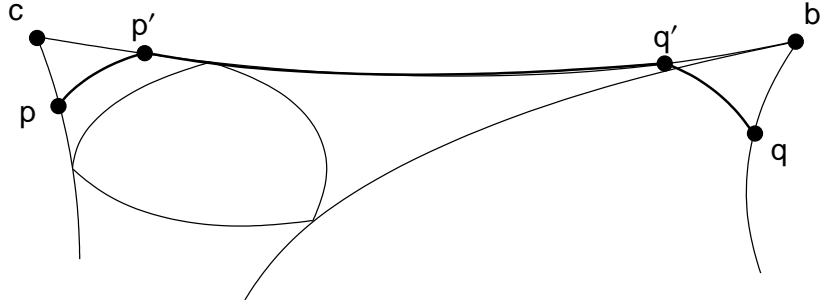

\begin{center}
\input quad_end.pstex_t
\caption{Points after the meeting points are distant}
\label{figure:quad_end}
\end{center}
\end{figure}

\begin{lemma}
\label{lemma:trapezium}
Let $\vertex{a}$, $\vertex{b}$, $\vertex{c}$ and $\vertex{d}$ be vertices in $\Gamma$ such that $l:= d(\vertex{a}, \vertex{b}) = d(\vertex{c}, \vertex{d})$. Let $\alpha_1 : [0, l] \to \Gamma$ be a geodesic path from $\vertex{a}$ to $\vertex{b}$ and let $\alpha_2 : [0, l] \to \Gamma$ be a geodesic path from $\vertex{d}$ to $\vertex{c}$ as in Figure \ref{figure:thin_quad}.

Define the constants
\[
K := d(\vertex{a}, \vertex{b}) - d(\vertex{b}, \vertex{d}),\ 
N_1  := (\vertex{a}, \vertex{b})_{\vertex{d}},\ 
N_2 :=  (\vertex{b}, \vertex{c})_{\vertex{d}}.
\]

Then, for $i \ge 0$ we have:
\begin{enumerate}
\item If $N_1 \le i \le N_2$ then $d(\alpha_2(i), \alpha_1(i+K)) \le 2\delta.$
\item If $N_1 + K \le i \le N_2 + K$ then $ d(\alpha_2(i-K), \alpha_1(i)) \le 2\delta. $
\item If $l \ge i \ge \max\{N_1 + K, N_2, N_2 +K\}$ then $ d(\alpha_1(i), \alpha_2(i)) =^{3\delta} d(\vertex{b}, \vertex{c}) - 2 (l - i). $
\end{enumerate}

Furthermore, if $l \ge i \ge d(\vertex{a}, \vertex{d})$ then at least one of these three cases applies.
\end{lemma}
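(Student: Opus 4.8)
The plan is to treat parts (1)--(3) as black boxes and simply verify that the hypotheses on the parameter $i$ in the three cases together cover the whole range $[d(\vertex{a},\vertex{d}),l]$; no further geometry is needed for the ``Furthermore'' clause. Writing the three admissible ranges out, case (1) needs $N_1\le i\le N_2$, case (2) needs $N_1+K\le i\le N_2+K$, and case (3) needs $\max\{N_1+K,N_2,N_2+K\}\le i\le l$. (Cases (1) and (2) in fact describe the same ``parallel'' middle stretch of the two geodesics, reparametrised by the shift $K$, which is why their ranges are translates of one another.) So it suffices to show that for every $i$ with $d(\vertex{a},\vertex{d})\le i\le l$, at least one of these three pairs of inequalities holds.

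The key step is to control the two \emph{left} endpoints $N_1$ and $N_1+K$. Expanding the Gromov product via its definition gives $N_1=(\vertex{a},\vertex{b})_{\vertex{d}}=\tfrac12\big(d(\vertex{a},\vertex{d})+d(\vertex{b},\vertex{d})-l\big)$, and since $K=l-d(\vertex{b},\vertex{d})$ we also get $N_1+K=\tfrac12\big(d(\vertex{a},\vertex{d})+l-d(\vertex{b},\vertex{d})\big)$. Now I would apply two triangle inequalities in $\Gamma$: from $d(\vertex{b},\vertex{d})\le d(\vertex{a},\vertex{b})+d(\vertex{a},\vertex{d})=l+d(\vertex{a},\vertex{d})$ one obtains $N_1\le d(\vertex{a},\vertex{d})$, and from $l=d(\vertex{a},\vertex{b})\le d(\vertex{a},\vertex{d})+d(\vertex{b},\vertex{d})$ one obtains $N_1+K\le d(\vertex{a},\vertex{d})$. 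Hence both left endpoints lie at or below $d(\vertex{a},\vertex{d})$, so any $i\ge d(\vertex{a},\vertex{d})$ automatically satisfies both $i\ge N_1$ and $i\ge N_1+K$.

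With both lower bounds thus disposed of, the conclusion follows from a three-way split on the position of $i$ relative to the two remaining thresholds $N_2$ and $N_2+K$. If $i\le N_2$ then $N_1\le i\le N_2$ and case (1) applies. Otherwise $i>N_2$; if in addition $i\le N_2+K$, then $N_1+K\le i\le N_2+K$ and case (2) applies. In the final branch $i>N_2$ and $i>N_2+K$, and combining these with $i\ge N_1+K$ gives $i\ge\max\{N_1+K,N_2,N_2+K\}$, while $i\le l$ is assumed, so case (3) applies. These three branches are mutually exhaustive, which closes the argument.

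I expect the only real subtlety to be making this final case analysis genuinely exhaustive without assuming any ordering between $N_1$ and $N_2$ or any sign for $K$: the trapezium can be ``slanted'' either way, so one cannot presuppose, say, $K\ge0$ or $N_1\le N_2$, and a careless split would leave a gap. Pinning both left endpoints below $d(\vertex{a},\vertex{d})$ via the triangle inequality is precisely what removes the need for such assumptions, after which the split on $N_2$ and $N_2+K$ is forced. Everything reduces to the two triangle inequalities above, so no hyperbolicity beyond what parts (1)--(3) already used enters this part of the argument.
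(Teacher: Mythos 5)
There is a genuine gap: the statement you were asked to prove is the entire lemma, and parts (1)--(3) are the bulk of it, but your proposal explicitly treats them as ``black boxes'' and proves only the final ``Furthermore'' clause. Those three cases are where all of the hyperbolic geometry lives, and none of it is present in your write-up. The paper's proof introduces a diagonal geodesic $\gamma := \path{\vertex{b}}{\vertex{d}}$, splitting the quadrilateral into two geodesic triangles ($\vertex{a},\vertex{b},\vertex{d}$ and $\vertex{b},\vertex{c},\vertex{d}$). For case (1) it takes the point $\alpha_2(i)$, passes to its corresponding point on $\gamma$ (distance $\le\delta$ by thinness of the second triangle), then to the corresponding point on $\alpha_1$ (another $\le\delta$ via the first triangle), and a short computation with distances shows that this landing point is exactly $\alpha_1(i+K)$ --- that computation is what makes the shift $K$ appear and is not automatic. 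Case (2) is case (1) re-indexed, and case (3) requires a separate argument: both $\alpha_1(i)$ and $\alpha_2(i)$ are chased (through $\gamma$ where necessary) to corresponding points $\vertex{q'},\vertex{p'}$ on a geodesic $\beta$ from $\vertex{b}$ to $\vertex{c}$, and then $d(\vertex{p'},\vertex{q'})$ is computed exactly as $d(\vertex{b},\vertex{c})-2(l-i)$, giving the $3\delta$ error from the three thinness applications. None of this can be recovered from the statement of the cases themselves, so your proposal, as it stands, is not a proof of the lemma.

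For the part you did prove, your argument is correct and is essentially the paper's: the paper establishes the identity $N_1+K=(\vertex{b},\vertex{d})_{\vertex{a}}$ and uses the standard bound of a Gromov product by the distance to the basepoint, where you expand everything and invoke the two triangle inequalities directly --- same content, and your three-way split on $N_2$ and $N_2+K$ matches the paper's ``if case 1 fails then $i>N_2$, if case 2 fails then $i>N_2+K$'' reasoning. Your closing remark that no sign assumption on $K$ or ordering of $N_1,N_2$ is needed is a fair observation, but it does not compensate for the missing proofs of (1)--(3).
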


\begin{proof}
Let $\gamma := \path{\vertex{b}}{\vertex{d}}$ be a geodesic, so that there are two geodesic triangles sharing the common side $\gamma$,
one with corners $\vertex{a}$, $\vertex{b}$, and $\vertex{d}$,
and the other with corners $\vertex{b}$, $\vertex{d}$ and $\vertex{c}$.
Also, let $\vertex{p} := \alpha_2(i)$ and $\vertex{q} := \alpha_1(i)$.

Suppose that $N_1 \le i \le N_2$. Then $\vertex{p}$ corresponds to some point $\vertex{q''}$ on $\gamma$ which in turn corresponds to some point $\vertex{q'}$ on $\alpha_1$ as illustrated in Figure \ref{figure:thin_quad_zoom}.
Observe that
\begin{eqnarray*}
      d(\vertex{a}, \vertex{q'})
& = & d(\vertex{a}, \vertex{b}) - d(\vertex{b}, \vertex{q'})
 =  d(\vertex{a}, \vertex{b}) - d(\vertex{b}, \vertex{q''})
= d(\vertex{a}, \vertex{b}) - d(\vertex{b}, \vertex{d}) + d(\vertex{d}, \vertex{q''}) \\
& = & d(\vertex{a}, \vertex{b}) - d(\vertex{b}, \vertex{d}) + d(\vertex{d}, \vertex{p}) 
 =  K + d(\vertex{d}, \vertex{p})
= K + i \\
& =& K + d(\vertex{a}, \vertex{q}),
\end{eqnarray*} so $\vertex{q'} = \alpha_1(i+K)$, and a geodesic path between $\vertex{p}$ and $\vertex{q'}$ has length at most $2\delta$ as required in the first case. 

For the second case, just use the first case with $i-K$ in place of $i$.

For the final case, note that
\begin{align}
    N_1 + K 
& = \frac{d(\vertex{d}, \vertex{a}) + d(\vertex{d}, \vertex{b}) - d(\vertex{a}, \vertex{b})}{2} + d(\vertex{a}, \vertex{b}) - d(\vertex{b}, \vertex{d}) \notag\\
& = \frac{d(\vertex{a}, \vertex{d}) + d(\vertex{a}, \vertex{b}) - d(\vertex{b}, \vertex{d})}{2} \notag\\
& = (\vertex{b}, \vertex{d})_{\vertex{a}}, \tag{$\ast$}
\end{align} the distance from $\vertex{a}$ to the meeting point on $\alpha_1$.

Now suppose that $l \ge i \ge \max\{N_1 + K, N_2, N_2 + K\}$. Let $\beta$ be a geodesic from $\vertex{b}$ to $\vertex{c}$. Then $d(\vertex{d}, \vertex{p}) \ge N_2$, so $\vertex{p}$ corresponds to a vertex $\vertex{p'}$ on $\beta$. Similarly, $d(\vertex{a},\vertex{q}) \ge N_1+K=(\vertex{b},\vertex{d})_{\vertex{a}}$ by ($\ast$) so $\vertex{q}$ corresponds to a vertex $\vertex{q''}$ on $\gamma$ with $d(\vertex{d}, \vertex{q''}) = i - K \ge N_2$, which in turn corresponds to a vertex $\vertex{q'}$ on $\beta$. This is illustrated in Figure \ref{figure:quad_end}.

Now,
\begin{eqnarray*}
      d(\vertex{p'}, \vertex{q'})
& = & d(\vertex{b}, \vertex{p'}) - d(\vertex{b}, \vertex{q'})
 = d(\vertex{b}, \vertex{c}) - d(\vertex{c}, \vertex{p}) - d(\vertex{b}, \vertex{q'})\\
 &=& d(\vertex{b}, \vertex{c}) - d(\vertex{b}, \vertex{q}) - d(\vertex{b}, \vertex{q})
= d(\vertex{b}, \vertex{c}) - 2d(\vertex{b}, \vertex{q}) 
 =  d(\vertex{b}, \vertex{c}) - 2(l - i),
\end{eqnarray*}
so $d(\alpha_1(i), \alpha_2(i)) =^{3\delta} d(\vertex{b}, \vertex{c}) - 2(l-i)$ as required.

For the final statement, assume that $i \ge d(\vertex{a}, \vertex{d})$ and that the first two cases do not apply. Since $i \ge d(\vertex{a}, \vertex{d}) \ge (\vertex{a}, \vertex{b})_{\vertex{d}} = N_1$, either $i > N_2$ or Case $1$ applies. Similarly, ($\ast$) implies that $i \ge d(\vertex{a}, \vertex{d}) \ge (\vertex{b}, \vertex{d})_{\vertex{a}} = N_1 + K$, so $i > N_2 + K$ or Case $2$ applies. Therefore $i \ge \max\{N_1+K, N_2, N_2 + K\}$ and Case $3$ applies.
\end{proof}

This lemma enables us to prove some results about the conjugates $g^{y^n}$ studied in this subsection. In particular, using the construction above in the group for some large power of $y$ provides computable estimates on the lengths of all conjugates by smaller powers of $y$, and also a constraint
on the form of those conjugates that are short in $G$.
For the remainder of this section, the shorthand $\canc{u}{v} = (\evaluate{u}, \evaluate{v})_{\vertex{1}}$ is adopted for words $u$ and $v$.

\begin{lemma}
\label{lemma:long_rectangle}
Suppose that $y$ is a straight word and that $g$ is a geodesic word.
Let $n \ge 0$, let $K := |y|n - |gy^n|_G$ and let $0 \le j \le n$.

\begin{enumerate}
\item If $\canc{g}{gy^n} \le |y|j \le \canc{gy^n}{y^n}$ then $g^{y^j} =_G h(y^n(K))^{-1}$ for some word $h$ with $|h| \le 2\delta$. 
\item If $\canc{g}{gy^n} + K \le |y|j \le \canc{gy^n}{y^n} + K$ then $g^{y^j} =_G y^{-n}(K)h$ for some word $h$ with $|h| \le 2\delta$. 
\item If $|y|n \ge |y|j \ge \max \{\canc{gy^n}{y^n}, \canc{g}{gy^n} + K, \canc{gy^n}{y^n} + K\}$ then $|g^{y^j}|_G =^{3\delta} |g^{y^n}|_G - 2|y|(n -j)$.
\end{enumerate}

Furthermore, if $|y|j \ge |g|$ then at least one of these three cases applies.
\end{lemma}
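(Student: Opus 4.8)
The plan is to specialise Lemma~\ref{lemma:trapezium} to a single explicit geodesic quadrilateral built from $y$. Since $y$ is straight, $y^n$ is geodesic, so the path labelled $y^n$ starting at any vertex is a geodesic of length $|y|n$. I would take the four corners to be $\vertex{a} := \evaluate{g}$, $\vertex{b} := \evaluate{gy^n}$, $\vertex{c} := \evaluate{y^n}$ and $\vertex{d} := \vertex{1}$, with $\alpha_1$ the geodesic from $\vertex{a}$ to $\vertex{b}$ labelled $y^n$ (so $\alpha_1(i) = \evaluate{g\, y^n(i)}$) and $\alpha_2$ the geodesic from $\vertex{d}$ to $\vertex{c}$ labelled $y^n$ (so $\alpha_2(i) = \evaluate{y^n(i)}$). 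With this choice $\alpha_1(|y|j) = \evaluate{gy^j}$ and $\alpha_2(|y|j) = \evaluate{y^j}$, and since left translation is an isometry, $d(\alpha_1(|y|j), \alpha_2(|y|j)) = d(\evaluate{y^j}, \evaluate{gy^j}) = |g^{y^j}|_G$. Thus the cross-distances of the quadrilateral are exactly the conjugate lengths we wish to control, under the substitution $i = |y|j$, which is legitimate because $0 \le j \le n$ forces $0 \le i \le l$.

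The next step is to check that the constants of Lemma~\ref{lemma:trapezium} coincide with those in the present statement. Using $|g|_G = |g|$ (as $g$ is geodesic) and $d(\evaluate{g}, \evaluate{gy^n}) = |y^n|_G = |y|n$ (as $y$ is straight), one computes directly $l = |y|n$ and $K = d(\vertex{a},\vertex{b}) - d(\vertex{b},\vertex{d}) = |y|n - |gy^n|_G$. The key point is that the trapezium's products $N_1 = (\vertex{a},\vertex{b})_{\vertex{d}}$ and $N_2 = (\vertex{b},\vertex{c})_{\vertex{d}}$ are based at $\vertex{d} = \vertex{1}$, which is precisely the base point in the definition of $\canc{\cdot}{\cdot}$; hence $N_1 = \canc{g}{gy^n}$ and $N_2 = \canc{gy^n}{y^n}$ with no further computation. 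Substituting $i = |y|j$, the hypotheses of Cases $1$, $2$ and $3$ of Lemma~\ref{lemma:trapezium} become verbatim those of Cases $1$, $2$ and $3$ here, and the concluding dichotomy matches because $d(\vertex{a},\vertex{d}) = |g|$ while $i \le l$ holds automatically; so the case structure transfers with no extra geometric work.

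It remains to convert the three conclusions into the stated algebraic forms. Case $3$ is immediate: its conclusion $d(\alpha_1(i),\alpha_2(i)) =^{3\delta} d(\vertex{b},\vertex{c}) - 2(l-i)$ reads $|g^{y^j}|_G =^{3\delta} |g^{y^n}|_G - 2|y|(n-j)$ once the distances above are inserted. For Cases $1$ and $2$ the conclusion says that $\evaluate{y^j}$ (resp. a prefix vertex of $y^n$) lies within $2\delta$ of a prefix vertex of $gy^n$ (resp. of $\evaluate{gy^j}$); reading off the connecting word $h$ with $|h|\le 2\delta$ and cancelling expresses $g^{y^j}$ as a product of $h$ with the inverse of a length-$K$ subword of $y^n$, in the appropriate order for each case. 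The one genuinely fiddly obstacle is to rewrite these subwords as the advertised $y^n(K)$ and $y^{-n}(K)$: here I would use that $y^n$ is an honest power, so a length-$K$ subword beginning or ending at a position that is a multiple of $|y|$ (such as $|y|j$) coincides by periodicity with the prefix $y^n(K)$ or with $(y^{-n}(K))^{-1}$. Tracking these index alignments, and checking that all indices remain in $[0,|y|n]$ (guaranteed by the case hypotheses), is the part requiring care, but it is purely combinatorial.
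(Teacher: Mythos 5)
Your proposal is correct and takes essentially the same route as the paper: the identical quadrilateral with corners $\evaluate{g}$, $\evaluate{gy^n}$, $\evaluate{y^n}$, $\vertex{1}$ and both long sides labelled $y^n$, the substitution $i = |y|j$ into Lemma~\ref{lemma:trapezium}, the same identification of $K$, $N_1$, $N_2$ with the stated constants, and the same periodicity argument identifying the length-$K$ subwords of $y^n$ with $y^n(K)$ and $(y^{-n}(K))^{-1}$. The paper's proof simply carries out explicitly the index bookkeeping for Cases 1 and 2 that you defer as ``purely combinatorial.''
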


\begin{proof}
Let $\vertex{a} := \evaluate{g}$, $\vertex{b} := \evaluate{gy^n}$, $\vertex{c} := \evaluate{y^n}$ and $\vertex{d} := \vertex{1}$, and note that the three cases of Lemma \ref{lemma:trapezium} (with $i = |y| j$) correspond exactly to the three cases here. Notice that $\evaluate{gy^n(k)} = \alpha_1(k)$ and $\evaluate{y^n(k)} = \alpha_2(k)$ for each $k$.

In the first case, $d(\evaluate{y^n(i)}, \evaluate{gy^n(i + K)}) \le 2\delta$ so there is a word $h$ of length at most $2\delta$ with $\vertex{d} \cdot y^n(i)h = \vertex{a} \cdot y^n(i + K)$. By definition, $y^n(i) = y^j$ and $y^n(i+K) =_G y^jy^n(K)$. Now, $g^{y^j}$ labels a path from $\evaluate{gy^n(i)}$ to $\evaluate{y^n(i)}$ so $g^{y^j} =_G h(y^n(K))^{-1}$ as required.

For the second case, $y^n(i - K) =_G y^jy^{-n}(K)$ so by a similar argument $g^{y^j} =_G y^{-n} (K) h$ for some word $h$ of length at most $2\delta$ as required.

For the third case, since $d(\vertex{b}, \vertex{c}) = |g^{y^n}|_G$ and $d(\vertex{a}, \vertex{b}) = |y|n$, the result follows from the third part of Lemma \ref{lemma:trapezium}.

Noting that $|g| = d(\vertex{a}, \vertex{d})$, the final statement again corresponds to the final statement of Lemma \ref{lemma:trapezium}.
\end{proof}

Recall that the aim is to find a convenient description of the conjugates $g^{y^n}$. The first step will be to determine whether a power of $y$ centralises $g$,
and thus establish whether the set of conjugates is infinite.

Since the conjugates in the first range in Lemma \ref{lemma:long_rectangle} are parametrised by a word of length at most $2\delta$, if a large number of $j$ in this range can be found, some conjugate will repeat and some power of $y$ will indeed be in the centraliser of $g$. The next lemma states this more precisely.

\begin{lemma}
\label{lemma:thinness_extends}
Suppose that $y$ is a straight word, $g$ is a geodesic word, and
$n \in \mathbb{Z}$ with $n \ge 0$.
If $n - \floor{\frac{|g| + |g^{y^n}|_G}{2|y|}} > V$
then there exist constants $d, e$ with $|g| - 2\delta \le d \le |g|$ and
$1 \le e \le V$ such that $ y^e \in C_G(g)$ and
$ |g^{y^k}|_G =^{2\delta} d $ for all $k \in \mathbb{Z}$.
\end{lemma}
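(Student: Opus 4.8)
The plan is to exploit the pigeonhole principle on the conjugates $g^{y^k}$ in the first range of Lemma \ref{lemma:long_rectangle}. The key observation is that in that range, each conjugate $g^{y^j}$ is equal in $G$ to $h(y^n(K))^{-1}$ for a word $h$ of length at most $2\delta$. The element $y^n(K)$ is fixed (it depends only on $n$ and $K$, not on $j$), so as $j$ varies over the range, the conjugates $g^{y^j}$ are distinguished only by the factor $h$. Since $|h| \le 2\delta$, there are at most $V$ distinct possible values of $\tau(h)$ (recall $V$ is the number of vertices in the closed $2\delta$-ball around $\vertex{1}$). Hence if the range contains more than $V$ integer values of $j$, two of them must yield the same conjugate.

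**First I would** pin down how many values of $j$ lie in the first range. The range is $\canc{g}{gy^n} \le |y|j \le \canc{gy^n}{y^n}$. Using $K = |y|n - |gy^n|_G$ and the definition of the Gromov products, one computes that the upper endpoint $\canc{gy^n}{y^n}$ is close to $|y|n - |g|_G/2$-type quantities, while the lower endpoint $\canc{g}{gy^n}$ is bounded in terms of $|g|$ and $|g^{y^n}|_G$. The hypothesis $n - \floor{\frac{|g| + |g^{y^n}|_G}{2|y|}} > V$ is precisely engineered to guarantee that the number of integers $j$ with $|y|j$ in this range exceeds $V$. So I would verify this counting estimate, translating the hypothesis into the statement that more than $V$ admissible values of $j$ exist.

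**Then** the pigeonhole step gives two indices $j_1 < j_2$ in the range with $g^{y^{j_1}} =_G g^{y^{j_2}}$, so setting $e := j_2 - j_1$ (with $1 \le e \le V$) yields $y^e \in C_G(g)$, which proves the centraliser claim. For the final assertion $|g^{y^k}|_G =^{2\delta} d$ for all $k \in \mathbb{Z}$, I would argue that once $y^e$ centralises $g$, the conjugates $g^{y^k}$ are periodic in $k$ with period $e$, so it suffices to bound the variation of $|g^{y^k}|_G$ as $k$ ranges over $e$ consecutive values inside the first range. Each such conjugate equals $h(y^n(K))^{-1}$ with $|h| \le 2\delta$, so any two of them differ in length by at most $2\delta$; taking $d$ to be, say, $|h(y^n(K))^{-1}|_G$ for the representative at $k$ an endpoint (which lies between $|g|-2\delta$ and $|g|$ because at the low end of the range the conjugate is close to $g$ itself) gives the bound $|g^{y^k}|_G =^{2\delta} d$.

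**The main obstacle** I expect is the bookkeeping in the counting step: one must carefully relate the Gromov-product endpoints $\canc{g}{gy^n}$ and $\canc{gy^n}{y^n}$ to the explicit quantity $\floor{\frac{|g| + |g^{y^n}|_G}{2|y|}}$ appearing in the hypothesis, and confirm that the number of integer $j$ in the range genuinely exceeds $V$ rather than merely $V-1$ or similar off-by-one slip. A secondary subtlety is justifying that $d$ lands in the interval $[|g|-2\delta, |g|]$ and that the length estimate extends to \emph{all} $k \in \mathbb{Z}$ (both negative and large positive) via the periodicity from $y^e \in C_G(g)$, rather than only to $k$ inside the original finite range.
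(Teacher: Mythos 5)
Your counting step and your pigeonhole step are exactly the paper's own argument: the number of $j$ in the first range of Lemma \ref{lemma:long_rectangle} works out to precisely $n - \floor{\frac{|g| + |g^{y^n}|_G}{2|y|}}$, and since the conjugates in that range all have the form $h(y^n(K))^{-1}$ with $\tau(h)$ ranging over the at most $V$ elements of the closed $2\delta$-ball, more than $V$ admissible $j$ force a repetition, giving $y^e \in C_G(g)$ with $1 \le e \le V$. Your remark that periodicity, together with the fact that the range contains more than $V \ge e$ consecutive integers, transports the length estimate to every $k \in \mathbb{Z}$ is also correct and is how the paper proceeds.

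However, your final step --- the construction of $d$ --- contains a genuine error. You claim that any two conjugates $h_1(y^n(K))^{-1}$ and $h_2(y^n(K))^{-1}$ with $|h_1|, |h_2| \le 2\delta$ differ in length by at most $2\delta$; what actually follows from that form is that each has length within $2\delta$ of $|K|$, the length of the \emph{common} factor, so two of them may differ by as much as $4\delta$. Consequently, taking $d$ to be the length of one particular conjugate yields only $|g^{y^k}|_G =^{4\delta} d$, which is weaker than the statement. Your parenthetical justification that this $d$ lands in $[|g|-2\delta,\,|g|]$ also does not stand: $k=0$ need not lie in the first range at all (the lower endpoint $\canc{g}{gy^n}$ is in general strictly positive, so ``the conjugate at the low end is close to $g$'' is unsubstantiated), and the length of an endpoint conjugate is bounded above only by $|K|+2\delta$, which can exceed $|g|$. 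The repair is the paper's choice of centre: set $d := |K| = \big||y|n - |gy^n|_G\big|$. Then every conjugate in the range --- hence, by your periodicity argument, every $g^{y^k}$ --- satisfies $|g^{y^k}|_G =^{2\delta} d$; putting $k=0$ gives $d \ge |g| - 2\delta$; and the triangle inequality gives $|K| \le |g|$, i.e.\ $d \le |g|$, completing the proof as stated.
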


\begin{proof}
The number of $j$ that satisfy the first case of
Lemma \ref{lemma:long_rectangle} is at least
\begin{eqnarray*}
 \ceil{\frac{\canc{gy^n}{y^n} - \canc{g}{gy^n}}{|y|}} 
 &=&  \ceil{\frac{|gy^n|_G + |y|n - |g^{y^n}|_G}{2|y|} - \frac{|g| + |gy^n|_G - |y|n}{2|y|}} \\
& = & \ceil{\frac{2|y|n - |g^{y^n}|_G - |g|}{2|y|}} 
 =  n - \floor{\frac{|g| + |g^{y^n}|_G}{2|y|}}.
\end{eqnarray*}

Since the conjugates $g^{y^j}$ for such values of $j$ are all of the form
$h(y^\infty(K))^{-1}$ for words $h \in B_{2\delta}(1)$,
if there are more than $V$ such $j$, then there must exist $i,j$ with $i<j$,
$g^{y^i} =_G g^{y^j}$ and $e:=j-i\le V$.
So $y^e$ is in the centraliser of $g$, as required.

This implies that each conjugate $g^{y^k}$ is equal to some $g^{y^j}$ where $j$
satisfies the first case of Lemma \ref{lemma:long_rectangle},
so $g^{y^k} =_G h(y^\infty(K))^{-1}$ with $h \in B_{2\delta}(1)$.
Hence $|g^{y^k}|_G =^{2\delta} |K|$ for all $k$,  and putting $k=0$ gives
$|g| \le |K| + 2\delta$. Finally, $|K| = \big||y|n - |gy^n|_G\big| \le |g|$,
so taking $d = |K|$ completes the proof.
\end{proof}

The following lemma illustrates that testing whether some power of $y$ is in
the centraliser of $g$ is as simple as finding the length of a single word.

\begin{lemma}
\label{lemma:test_centraliser}
Suppose that $y$ is a straight word and that $g$ is a geodesic word, and
let $N \in \mathbb{Z}$ with $N > V + \floor{\frac{|g| + \delta}{|y|}}$. Then:\\
(i) if $|g^{y^N}|_G \le |g| + 2\delta$ then
   $N - \floor{\frac{|g| + |g^{y^N}|_G}{2|y|}} > V$;\\
(ii)  $|g^{y^N}|_G \le |g| + 2\delta$ if and only if some power of $y$
centralises $g$.
\end{lemma}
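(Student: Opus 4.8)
The plan is to reduce both parts to Lemma~\ref{lemma:thinness_extends}, which already packages the hard geometric content: the existence of a centralising power of $y$ together with the uniform bound on the lengths of the conjugates $g^{y^k}$. Part (i) is then a purely arithmetic consequence of the hypotheses. Assuming $|g^{y^N}|_G \le |g| + 2\delta$, I would bound $|g| + |g^{y^N}|_G \le 2(|g| + \delta)$, so that $\floor{\frac{|g| + |g^{y^N}|_G}{2|y|}} \le \floor{\frac{|g| + \delta}{|y|}}$; combining this with the assumed lower bound $N > V + \floor{\frac{|g| + \delta}{|y|}}$ gives
\[
N - \floor{\frac{|g| + |g^{y^N}|_G}{2|y|}} \ge N - \floor{\frac{|g| + \delta}{|y|}} > V,
\]
which is (i).

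For the forward implication of (ii), suppose $|g^{y^N}|_G \le |g| + 2\delta$. Part~(i) shows that the hypothesis of Lemma~\ref{lemma:thinness_extends} holds with $n = N$, and its conclusion immediately yields an integer $1 \le e \le V$ with $y^e \in C_G(g)$, so some power of $y$ centralises $g$.

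For the reverse implication, suppose $y^e \in C_G(g)$ for some $e \ge 1$. The key observation is that conjugate lengths are then periodic in the exponent: since $g^{y^{k+e}} =_G y^{-k}(y^{-e}gy^e)y^k =_G g^{y^k}$, the sequence $\bigl(|g^{y^k}|_G\bigr)_{k \in \mathbb{Z}}$ has period $e$ and is therefore bounded by some constant $C$. Consequently $\floor{\frac{|g| + |g^{y^n}|_G}{2|y|}} \le \floor{\frac{|g| + C}{2|y|}}$ is bounded independently of $n$, so for every sufficiently large $n$ the quantity $n - \floor{\frac{|g| + |g^{y^n}|_G}{2|y|}}$ exceeds $V$. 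Fixing one such $n$ and applying Lemma~\ref{lemma:thinness_extends} produces a constant $d \le |g|$ with $|g^{y^k}|_G =^{2\delta} d$, hence $|g^{y^k}|_G \le d + 2\delta \le |g| + 2\delta$, for \emph{all} $k \in \mathbb{Z}$; specialising to $k = N$ completes the proof.

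The only genuine subtlety, and thus the step I would be most careful about, is this reverse direction, where the algebraic fact ``$y^e$ centralises $g$'' must be converted into a usable instance of the hypothesis of Lemma~\ref{lemma:thinness_extends}. The periodicity argument achieves this, but it crucially exploits the global ``for all $k$'' form of that lemma's conclusion: I verify the hypothesis at one large \emph{auxiliary} power $n$, which need not equal $N$, and only afterwards read off the length bound at $k = N$. Everything else is routine arithmetic with floors.
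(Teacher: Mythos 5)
Your proof is correct and follows essentially the same route as the paper: part (i) by the same floor-function arithmetic, the forward direction of (ii) by feeding part (i) into Lemma~\ref{lemma:thinness_extends}, and the reverse direction by verifying that lemma's hypothesis at an auxiliary large exponent and then reading off the global length bound at $k=N$. The only cosmetic difference is that the paper chooses the explicit auxiliary exponent $N_1 = e(V+|g|+1)$, a multiple of $e$ (so $g^{y^{N_1}} =_G g$ exactly), whereas you invoke periodicity of the lengths to pick any sufficiently large $n$; both are valid instantiations of the same idea.
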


\begin{proof}
The first part is just straightforward evaluation: \begin{eqnarray*}
        N - \floor{\frac{|g| + |g^{y^N}|_G}{2|y|}}
&  >  & V + \floor{\frac{|g| + \delta}{|y|}} - \floor{\frac{|g| + |g^{y^N}|_G}{2|y|}} \\
& \ge & V + \floor{\frac{|g| + \delta}{|y|}} - \floor{\frac{2|g| + 2\delta}{2|y|}} 
 = V.
\end{eqnarray*}

For the second part, note that the first part covers the ``only if'' case by
Lemma \ref{lemma:thinness_extends}, so it remains to prove the ``if'' case.
Suppose that $y^e \in C_G(g)$ for some $e > 0$, and let $N_1:=e(V + |g| + 1)$.
Clearly $y^{N_1} \in C_G(g)$, so in particular $|g^{y^{N_1}}|_G = |g| \le |g| + 2 \delta$. Also
\[
        N_1 - \floor{\frac{|g|+|g^{y^{N_1}}|_G}{2|y|}}
  =   N_1 - \floor{\frac{2|g|}{2|y|}}
\ge eV + |g|e + e - |g| >  V,
\]
so Lemma \ref{lemma:thinness_extends} implies $|g^{y^k}|_G \le |g| + 2\delta$
for all $k \in \mathbb{Z}$.
\end{proof}

It remains to analyse the behaviour of the conjugates when no power of $y$ centralises $g$. The next lemma shows that the length of conjugates $g^{y^n}$ for large $n$ is predictable in this situation.

\begin{lemma}
\label{lemma:unbd_conj_long}
Suppose that $y$ is a straight word and that $g$ labels a geodesic in $\Gamma$. If $N > \frac{|g|}{|y|}$ and $|g^{y^N}|_G > |g| + 2\delta$ then $|g^{y^n}|_G =^{3\delta} |g^{y^N}|_G + 2|y|(n - N)$ for all $n \ge N$.
\end{lemma}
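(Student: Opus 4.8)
The plan is to reduce everything to the third case of Lemma~\ref{lemma:long_rectangle}. Fix an arbitrary $n \ge N$ (the case $n = N$ being trivial) and apply that lemma with large exponent $n$ and small index $j = N$, writing $K := |y|n - |gy^n|_G$ as there; note $0 \le N \le n$, so the hypotheses are met. Since $N > |g|/|y|$ we have $|y|N > |g|$, so the final clause of Lemma~\ref{lemma:long_rectangle} guarantees that at least one of its three cases holds for $j = N$. The whole argument then consists of showing that the hypothesis $|g^{y^N}|_G > |g| + 2\delta$ forces that case to be the third, whose conclusion is exactly $|g^{y^N}|_G =^{3\delta} |g^{y^n}|_G - 2|y|(n-N)$, i.e.\ (after transposing) the desired estimate.

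To rule out the first two cases I would use their explicit descriptions of $g^{y^N}$. In Case 1 we have $g^{y^N} =_G h\,(y^n(K))^{-1}$ and in Case 2 we have $g^{y^N} =_G y^{-n}(K)\,h$, each with $|h| \le 2\delta$; in both, the factor coming from $y^n$ is a (possibly inverted) subword of $y^n$, which is geodesic because $y$ is straight, so it represents an element of length exactly $|K|$. Hence in either case $|g^{y^N}|_G \le |K| + 2\delta$. The key inequality is $|K| \le |g|$: since $g$ is geodesic and $y$ straight, $|y^n|_G = |y|n$ and $|g|_G = |g|$, so applying the triangle inequality to $\vertex{1}$, $\evaluate{y^n}$ and $\evaluate{gy^n}$ gives both $|gy^n|_G \ge |y|n - |g|$ and $|gy^n|_G \le |y|n + |g|$, whence $-|g| \le K \le |g|$. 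Combining, $|g^{y^N}|_G \le |g| + 2\delta$ in Cases 1 and 2, contradicting the hypothesis.

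Therefore Case 3 applies, and its conclusion rearranges to $|g^{y^n}|_G =^{3\delta} |g^{y^N}|_G + 2|y|(n-N)$; as $n \ge N$ was arbitrary, this proves the lemma. I expect the only real care needed to be the bookkeeping around the sign of $K$ and the meaning of $y^n(K)$ when $K$ may be negative — but since that factor is always a subword of the geodesic $y^n$ up to inversion, its length in $G$ is $|K|$ regardless, so this does not affect the bound. The substance of the proof is the single observation that lying in the thin region (Cases 1 or 2) caps $|g^{y^N}|_G$ at $|g| + 2\delta$, so the assumption $|g^{y^N}|_G > |g| + 2\delta$ pins us to the linearly-growing regime for every $n \ge N$.
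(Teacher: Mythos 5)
Your proof is correct and takes essentially the same route as the paper: apply Lemma~\ref{lemma:long_rectangle} with $j = N$, note that $|y|N > |g|$ makes one of the three cases apply, use $|g^{y^N}|_G > |g| + 2\delta$ together with $|K| \le |g|$ to rule out the conclusions of the first two cases, and rearrange the third case's conclusion. The only difference is one of detail: you spell out why Cases 1 and 2 force $|g^{y^N}|_G \le |K| + 2\delta$ and why $|K| \le |g|$ (via the triangle inequality and straightness of $y$), steps the paper compresses into a single line.
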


\begin{proof}
Apply Lemma \ref{lemma:long_rectangle} with $j = N$. Since $N|y| > |g|$,
at least one of the three cases applies. Because
$|g^{y^N}|_G > |g| + 2\delta \ge K + 2\delta$, the conclusions of the
first two cases cannot apply. So the third case must apply and
$|g^{y^N}|_G =^{3\delta} |g^{y^n}|_G - 2|y|(n-N)$,
which implies the required equation.
\end{proof}

The next result is simply a summary of the above results.

\begin{proposition}
\label{proposition:large_power_conjugate}
Let $g \in G$ and let $y$ be some straight word.
Let $N > V + \floor{\frac{|g|_G + \delta}{|y|}}$.
Then one of the following is true:

\begin{enumerate}
\item \label{case:lpc:central} $|g^{y^N}|_G \le |g|_G + 2\delta$ and there is
some $0 < e \le V$ such that $y^e \in C_G(g)$.
\item \label{case:lpc:noncentral} $|g^{y^N}|_G > |g|_G + 2\delta$ and
$|g^{y^n}|_G =^{3\delta}|g^{y^N}|_G + 2|y|(n-N)$ for all $n \ge N$. 
\end{enumerate}
\end{proposition}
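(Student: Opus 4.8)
The plan is to treat this proposition as nothing more than a bookkeeping exercise that assembles the three preceding lemmas according to whether the conjugate $g^{y^N}$ is short or long in $G$. First I would fix a geodesic word representing the element $g$, so that $|g| = |g|_G$ and the hypotheses of Lemmas \ref{lemma:thinness_extends}, \ref{lemma:test_centraliser} and \ref{lemma:unbd_conj_long} (each of which demands that $g$ be a geodesic word) are met. Since every quantity appearing in the statement depends only on the group elements $\tau(g^{y^n})$, this choice of representative is harmless. I would then note that the hypothesis $N > V + \floor{\frac{|g|_G + \delta}{|y|}}$ is precisely the hypothesis placed on $N$ in Lemma \ref{lemma:test_centraliser}, and split into two cases according to the size of $|g^{y^N}|_G$.

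In the first case I assume $|g^{y^N}|_G \le |g|_G + 2\delta$. Here I would apply part (i) of Lemma \ref{lemma:test_centraliser} to deduce $N - \floor{\frac{|g|_G + |g^{y^N}|_G}{2|y|}} > V$. This is exactly the hypothesis of Lemma \ref{lemma:thinness_extends}, so that lemma immediately supplies a constant $e$ with $1 \le e \le V$ and $y^e \in C_G(g)$, which together with the case assumption is Case \ref{case:lpc:central}.

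In the second case I assume $|g^{y^N}|_G > |g|_G + 2\delta$. To reach Case \ref{case:lpc:noncentral} I would invoke Lemma \ref{lemma:unbd_conj_long}, whose only additional hypothesis is $N > \frac{|g|_G}{|y|}$; this follows from the bound on $N$ together with $V \ge 1$, since $\floor{\frac{|g|_G + \delta}{|y|}} \ge \floor{\frac{|g|_G}{|y|}} > \frac{|g|_G}{|y|} - 1$, whence $N > V + \frac{|g|_G}{|y|} - 1 \ge \frac{|g|_G}{|y|}$. The lemma then yields $|g^{y^n}|_G =^{3\delta} |g^{y^N}|_G + 2|y|(n - N)$ for all $n \ge N$, which is Case \ref{case:lpc:noncentral}.

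Since the two cases are exhaustive and mutually exclusive, the proposition follows. There is essentially no genuine obstacle: the mathematical content lives entirely in the three cited lemmas, and the only point meriting care is checking that the single threshold on $N$ in the statement simultaneously dominates the slightly different thresholds required by Lemmas \ref{lemma:test_centraliser} and \ref{lemma:unbd_conj_long}. That reduces to the elementary inequality above, so the step most worth double-checking is simply that $V \ge 1$ (in fact $V \ge 5$) guarantees $N > \frac{|g|_G}{|y|}$ in the second case.
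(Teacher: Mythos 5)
Your proposal is correct and is exactly the argument the paper intends: the paper offers no written proof, stating only that the proposition ``is simply a summary of the above results,'' and your case split on $|g^{y^N}|_G \le |g|_G + 2\delta$ versus $|g^{y^N}|_G > |g|_G + 2\delta$, feeding Lemma \ref{lemma:test_centraliser}(i) into Lemma \ref{lemma:thinness_extends} in the first case and invoking Lemma \ref{lemma:unbd_conj_long} in the second, is precisely that summary. Your extra care in passing to a geodesic representative of $g$ and in checking that the single threshold on $N$ implies $N > |g|_G/|y|$ is sound and fills in the details the paper leaves implicit.
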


Given words $u$ and $v$ and a shortlex straight word $y$, the preceding
proposition can be used to test whether $u^{y^n} =_G v$ for some integer $n$.

\begin{proposition}
\label{prop:test_conj_vs_sls}
Let $u, v$ be words and let $y$ be a straight word. In time $O(|u|+|v|+|y|)$ it is possible to find $r, t \in \mathbb{Z} \cup \{\infty\}$ such that either 

\begin{enumerate}
\item \label{case:pl:central}  $0\le r<t\le V$ and $u^{y^j} =_G v$ if and only if $j \equiv r \mod t$;
\item \label{case:pl:conj} $r \in \mathbb{Z}$, $t=\infty$ and $r$ is the unique integer such that $u^{y^r} =_G v$; or
\item \label{case:pl:not_conj} $r=\infty$, $t=\infty$ and there is no integer $n$ such that $u^{y^n} =_G v$.
\end{enumerate}
\end{proposition}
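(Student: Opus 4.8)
The plan is to reduce the whole question to understanding the single-element conjugacy length function $f(n) := |g^{y^n}|_G$, where $g := \pi(u)$ is the shortlex reduction of $u$ (computed in time $O(|u|)$ by Lemma~\ref{lemma:short_lex}), and to determine the full solution set $\{n \in \mathbb{Z} : g^{y^n} =_G \pi(v)\}$. First I would fix $N := V + \floor{\frac{|g|_G + \delta}{|y|}} + 1$ and compute the single length $|g^{y^N}|_G$ by shortlex-reducing the word $y^{-N} g y^N$; since $N|y| = O(|g|_G + |y|)$, this word has length $O(|g|_G+|y|)$ and the computation costs $O(|g|_G+|y|)$. Comparing $|g^{y^N}|_G$ against $|g|_G + 2\delta$ and invoking Proposition~\ref{proposition:large_power_conjugate} (together with the equivalence in Lemma~\ref{lemma:test_centraliser}) splits the problem into the \emph{central} case, where some $y^e \in C_G(g)$ with $0 < e \le V$, and the \emph{non-central} case, where no power of $y$ centralises $g$.

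In the central case the map $n \mapsto g^{y^n}$ is periodic, so I would find the minimal period $t$ by testing $y^{-e'} g y^{e'} =_G g$ for $e' = 1, \dots, V$ in turn (each test costs $O(|g|_G+|y|)$, and Proposition~\ref{proposition:large_power_conjugate} guarantees success with $t \le V$). By minimality the conjugates $g^{y^0}, \dots, g^{y^{t-1}}$ are pairwise distinct, so at most one residue $r$ satisfies $g^{y^r} =_G \pi(v)$; testing all $t$ residues locates it if it exists. This returns $(r,t)$ in case~\ref{case:pl:central}, or $(\infty,\infty)$ in case~\ref{case:pl:not_conj} if no residue works.

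The non-central case is the crux. Here injectivity of $n \mapsto g^{y^n}$ forces at most one solution, so the answer is either case~\ref{case:pl:conj} or case~\ref{case:pl:not_conj}; the difficulty is to locate the unique candidate without scanning the whole interval $-N < n < N$, which may contain $\Theta(|g|_G/|y|)$ integers. The key structural fact is that $f$ has a ``bathtub'' profile. Applying Lemma~\ref{lemma:unbd_conj_long} to $y$ gives $f(n) =^{3\delta} f(N) + 2|y|(n-N)$ for $n \ge N$, and applying it to the straight word $y^{-1}$ (whose powers are also centraliser-free by Lemma~\ref{lemma:test_centraliser}) gives the mirror-image arm $f(n) =^{3\delta} f(-N) + 2|y|(-n-N)$ for $n \le -N$; meanwhile Lemma~\ref{lemma:thinness_extends}, used in contrapositive form, shows that the flat bottom of $f$ --- the set of $j$ falling under case~1 of Lemma~\ref{lemma:long_rectangle} for the fixed power $N$ --- contains at most $V$ integers, located in the range $\canc{g}{gy^N} \le |y|j \le \canc{gy^N}{y^N}$, on which $f =^{2\delta} |K|$ with $K = |y|N - |gy^N|_G$. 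Consequently the exact-length condition $f(n) = |\pi(v)|_G$ can hold only for $n$ lying in one of two $O(\delta)$-wide windows on the arms, centred at the computable positions $N + \frac{|\pi(v)|_G - f(N)}{2|y|}$ and $-N - \frac{|\pi(v)|_G - f(-N)}{2|y|}$, or, when $\big| |\pi(v)|_G - |K| \big| \le 2\delta$, among the $\le V$ integers of the flat bottom. This yields a bounded, explicitly computable candidate set; for each candidate $n$ I would form $y^{-n} g y^n$ (of length $O(|g|_G+|v|+|y|)$, since the candidate positions satisfy $|y|\,|n| = O(|g|_G+|v|+|y|)$) and test equality with $\pi(v)$, returning the unique solution as $(r,\infty)$ in case~\ref{case:pl:conj} or $(\infty,\infty)$ in case~\ref{case:pl:not_conj}.

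The main obstacle, as indicated, is precisely this non-central middle range: the lemmas must be combined to certify that $f$ is unimodal up to bounded additive error, with a flat minimum of width at most $V$, so that requiring $f(n)=|\pi(v)|_G$ exactly confines solutions to $O(1)$ computable positions rather than to the full $\Theta(|g|_G/|y|)$-long interval. Once that structural claim is established, every remaining step is a constant number of shortlex reductions and comparisons (Lemma~\ref{lemma:short_lex}) on words of length $O(|u|+|v|+|y|)$, which delivers the claimed $O(|u|+|v|+|y|)$ bound.
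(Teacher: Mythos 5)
Your central case is sound and essentially matches the paper's treatment, but your non-central case has a genuine gap: the ``bathtub'' structure you rely on is not established by the lemmas you cite on the middle of the interval. Lemma \ref{lemma:long_rectangle} guarantees that one of its three cases applies only when $|y|j \ge |g|$ (this is exactly its ``furthermore'' clause), and Lemma \ref{lemma:unbd_conj_long} (applied to $y$ and to $y^{-1}$) only controls $f(n)=|g^{y^n}|_G$ for $|n| \ge N$. So between them, the cited results say nothing about $f$ on the roughly $2|g|_G/|y|$ integers $n$ with $|y|\,|n| < |g|_G$: a solution of $u^{y^r}=_G v$ lying in that window is simply absent from your candidate set, and checking the whole window would cost $\Theta(|g|_G/|y|)$ equality tests, far exceeding $O(|u|+|v|+|y|)$. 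Coarse convexity of the distance between the geodesics $n\mapsto y^n$ and $n \mapsto gy^n$ would indeed certify the unimodal profile you want, but that is an additional geometric input which you assert (``the lemmas must be combined to certify\ldots'') rather than prove; it does not follow from Lemmas \ref{lemma:long_rectangle}, \ref{lemma:thinness_extends} and \ref{lemma:unbd_conj_long} alone.

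The paper's proof never needs the profile of $f$ in this middle region, because of a trick you are missing: it brings $v$ into the asymptotics. If $u^{y^r} =_G v$ then $u^{y^{n+r}} =_G v^{y^n}$ for all $n$, so applying Proposition \ref{proposition:large_power_conjugate} to \emph{both} $u$ and $v$ (with a common power $N$ built from $|u|_G + |v|_G$) and comparing the two linear growth laws for large $n$ gives $l_u + 2|y|(n+r-N) =^{3\delta} l_v + 2|y|(n-N)$, hence $2|y|r =^{6\delta} l_v - l_u$. This pins $r$ --- wherever it lies, including your uncontrolled middle region --- to an explicit window of at most $6\delta+1$ integers, each checkable in time $O(|u|+|v|+|y|)$. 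To salvage your one-sided approach you would have to prove coarse unimodality of $f$ on all of $\mathbb{Z}$ (say by a thin-quadrilateral, divide-and-conquer argument), which is more work than the paper's two-sided intercept comparison.
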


\begin{proof} 
First, let $N:= V + 1 + \floor{\frac{|u|_G + |v|_G + \delta}{|y|}}$ and let $l_g:=|g^{y^N}|_G$, where $g$ is either $u$ or $v$.

If $l_u \le |u|_G + 2\delta$ but $l_v > |v|_G + 2\delta$ then by Proposition \ref{proposition:large_power_conjugate}, the conjugates $u^{y^n}$ have bounded length whereas the conjugates $v^{y^n}$ do not. Thus there can be no $n \in \mathbb{Z}$ such that $u^{y^n} =_G v$. The same is true if these two inequalities are reversed, so if $u$ and $v$ lie in different cases of Proposition \ref{proposition:large_power_conjugate} then set $r = t = \infty$ and stop. Otherwise, both $u$ and $v$ lie in the same case of Proposition \ref{proposition:large_power_conjugate}.

Suppose that $l_u \le |u|_G + 2\delta$. By Proposition \ref{proposition:large_power_conjugate},
some power $y^e$ with  $0 < e \le V$ centralises $u$,
so in particular Case \ref{case:pl:conj} does not apply. Since $V$ is bounded above in terms of $|X|$ and $\delta$, it is possible to check for each $0 \le r' < t' \le V$ if $u^{y^{t'}} =_G u$ or $u^{y^{r'}} =_G v$ in time $O(|u|+|v|+|y|)$. If no $r'$ is found, Case 3 holds so let $r = t = \infty$. Otherwise Case 1 holds so pick the lowest values found for $r'$ and $t'$ as $r$ and $t$ respectively.

Finally, suppose that $l_u > |u|_G + 2\delta$. Proposition \ref{proposition:large_power_conjugate} implies that $|u^{y^n}|_G =^{3\delta} l_u + 2|y|(n-N)$ for large $n$, so Case \ref{case:pl:central} cannot apply and no power of $y$ is in the centraliser of $u$. In fact, by Proposition  \ref{proposition:large_power_conjugate}, if $u^{y^r} =_G v$ then, for all sufficiently large $n$,
\[
l_u +2|y|(n+r-N) 
 =^{3\delta}  |u^{y^{n+r}}|_G
 = |v^{y^n}|_G
=^{3\delta} l_v + 2|y|(n-N).
\]
Rearranging, $l_v-l_u=^{6\delta}2|y|r$, so $\frac{l_v-l_u-6\delta}{2|y|} \le r \le \frac{l_v-l_u+6\delta}{2|y|}$. Because no power of $y$ centralises $u$, there can only be one $n$ such that $u^{y^n} =_G v$ and to find it, we must simply check each $r$ in this range. If some $y^r$ conjugates $u$ to $v$ then Case 2 holds so set $t=\infty$ and stop, otherwise Case 3 holds so set $r=t=\infty$. At most $6\delta+1$ checks of conjugates $u^{y^n}$ need to made to distinguish between these two cases, and each check takes time $O(|u|+|v|+|y|)$ as required. 
\end{proof}

\subsection{Testing conjugacy of $A$ and $B$}

Recall that $A = (a_1, \ldots, a_m)$ and $B = (b_1, \ldots, b_m)$, that $a_1$
has infinite order, and the aim is to test if there is an element $g\in G$ with
$A^g =_G B$. We can now present an algorithm to carry out this test.
Furthermore, it will find the set of all $g \in G$ with this property.
Let $\mu$ be an upper bound on the length of elements in the two lists.

Use Corollary \ref{corollary:test_fin_ord} to test in time $O(|b_1|)$ if $b_1$ is of infinite order. If it is not, $a_1$ and $b_1$ are not conjugate, so neither are $A$ and $B$ and the algorithm returns \textsc{False}.

Next, apply Proposition \ref{prop:conj_candidates} to $a_1$ and $b_1$ to obtain
a word $p$, a shortlex straight word $y$ and a set $S$ of at most $V$ words
such that $a_1^g =_G b_1$ only if $g =_G py^ns$ for some $n \in \mathbb{Z}$ and $s \in S$. All returned words have length $O(|a_1| + |b_1|)$ and this step
takes time $O(|a_1| + |b_1|) \le O(\mu)$.

The following steps are carried out for each $s \in S$. Since $|S| \le V$,
it is sufficient to show that the time taken is $O(m\mu)$ for each $s \in S$.

For each $i \in \{1, \ldots, m\}$, applying
Proposition \ref{prop:test_conj_vs_sls} to $a_i^p$, $b_i^{s^{-1}}$ and $y$
provides values $r_i$ and $t_i$ with $a_i^{py^{r_i+jt_i}} =_G b_i^{s^{-1}}$
for all $j \in \mathbb{Z}$ in time $O(m\mu)$.

If $r_i = \infty$ for some $i$ then $a_i^p$ is not conjugated to $b_i^{s^{-1}}$
by any power of $y$, so the same is true of $A$ and $B$, and we delete $s$ from
$S$.

Otherwise, if $t_i = \infty$ for some $i$, then $y^{r_i}$ is the only
power of $y$ that might conjugate $A^p$ to $B^{s^{-1}}$.  So we test whether
this is the case.  If so, then we set $T_s :=0$ and
$R_s := r_i$. If not, then we delete $s$ from $S$.

The remaining case is where all $t_i$ and $r_i$ are finite, in which case the set of equations $j \equiv r_i \mod t_i$ must be solved simultaneously.
By the Chinese Remainder Theorem, there is either no solution to these
equations, or the set of solutions has the form
$\{R_s + nT_s \mid n \in \mathbb{Z}\}$,
where $T_s$ is the least common multiple of the $t_i$. 
Since $t_i \le V$ for all $i$, we have $T_s \le V!$, so we can
test whether there is a solution and, if so find $R_s$ and $T_s$,
in time $O(m)$.
If there is no solution, then we delete $s$ from $S$.

After carrying out the above computations for each $s \in S$,
we have a complete description of the set of elements $g \in G$ for which
$A^g =_G B$ has been obtained: they are precisely those elements $g =_G py^{R_s + nT_s}s$ for $s \in S$ and $n \in \mathbb{Z}$.

If $S$ is empty, then return \textsc{False}.
Otherwise return \textsc{True} and the conjugating element $py^{R_s}s$.
This completes the proof of Theorem 1 under the assumption that $a_1$
has infinite order.

\subsection{Finding the centraliser of $A$}

Let $B = A$ and proceed exactly as in the previous subsection,
except for the final paragraph.
The algorithm has established that all elements $g$ with $A^g =_G A$ are of
the form $py^{R_s + nT_s}s$ for some $s \in S$ and $n \in \mathbb{Z}$ and all
elements of this form are in $C_G(A)$.
It remains to find a finite generating set for $C_G(A)$.

If $T_s = 0$ for all $s \in S$, then $C_G(A)$ is finite and the algorithm
returns $\{ py^{R_s}s : s \in S \}$ as a generating set.

Otherwise, $T_s > 0$ for some $s \in S$.
Since $py^{R_s}s$ and $py^{R_s + T_s}s$ are both elements of the centraliser,
so is $py^{R_s + T_s}s(py^{R_s}s)^{-1} =_G py^{T_s}p^{-1}$.
Now, for $s,t \in S$ with $T_t>0$, we have
$(py^{T_s}p^{-1})^n(py^{R_t}t) =_G py^{R_t + nT_s}t$ for all $n \in \mathbb{Z}$,
so $T_t$ divides $T_s$ and hence all nonzero $T_s$ have the same value, $T$.
Noting that $(py^{T}p^{-1})^{-n}(py^{R_s + nT}s) =_G py^{R_s}s$ for any
$s \in S$ and $n \in \mathbb{Z}$, we see that $C_G(A)$ is generated by the set
$\{ py^{R_s}s : s \in S \} \cup \{ py^{T}p^{-1} \}$. This set has size in $O(1)$
and each element has length $O(\mu)$, so it can be computed in time $O(\mu)$.
This completes the proof of Theorem 2 under the assumption that $a_1$
has infinite order.

\section{Conjugacy of general lists} 

The purpose of this section is to show that the conjugacy problem for finite
lists is solvable in linear time even when all elements of both lists have
finite order. To do this, we either find an infinite order element that is a
product of some of the elements in one of the lists, or we reduce the problem
to the case in which both the length of the lists and the lengths of the
elements in the lists are bounded by a constant.

\subsection{Simple results}

We start with two elementary observations. A {\em mid-vertex} on a path is
defined to be a vertex at distance at most $1/2$ from the mid-point of the path.

\begin{lemma}
\label{lemma:triangle}
Suppose that $\vertex{x}$, $\vertex{y}$ and $\vertex{z}$ are vertices in
$\Gamma$ and $\vertex{p}$ is a mid-vertex of a geodesic path $\path{\vertex{x}}{\vertex{y}}$. Then \[
d(\vertex{p}, \vertex{z}) \le \frac{2\max\{d(\vertex{x}, \vertex{z}), d(\vertex{y}, \vertex{z})\} - d(\vertex{x}, \vertex{y}) + 1}{2} + \delta.
\]
\end{lemma}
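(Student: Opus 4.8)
The plan is to use the $\delta$-thinness of a geodesic triangle on the corners $\vertex{x}$, $\vertex{y}$, $\vertex{z}$, together with the fact that $\vertex{p}$ is essentially the midpoint of the side $\path{\vertex{x}}{\vertex{y}}$. First I would set up a geodesic triangle with the given side $\path{\vertex{x}}{\vertex{y}}$ (on which $\vertex{p}$ lies) and two further geodesic sides $\path{\vertex{x}}{\vertex{z}}$ and $\path{\vertex{y}}{\vertex{z}}$. Since $\vertex{p}$ is a mid-vertex, it lies within $1/2$ of the midpoint of $\path{\vertex{x}}{\vertex{y}}$, so I can write $d(\vertex{x},\vertex{p}) =^{1/2} d(\vertex{y},\vertex{p}) = d(\vertex{x},\vertex{y})/2$.

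The key step is to locate the corresponding point of $\vertex{p}$ on one of the other two sides. Under the correspondence used to define $\delta$-thinness, the point $\vertex{p}$ corresponds to a point $\vertex{p}'$ lying on either $\path{\vertex{x}}{\vertex{z}}$ or $\path{\vertex{y}}{\vertex{z}}$, with $d(\vertex{p},\vertex{p}') \le \delta$. I would split into these two cases. Say $\vertex{p}'$ lies on $\path{\vertex{x}}{\vertex{z}}$ and corresponds to $\vertex{p}$ at the corner $\vertex{x}$; then $d(\vertex{x},\vertex{p}') = d(\vertex{x},\vertex{p}) =^{1/2} d(\vertex{x},\vertex{y})/2$. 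Since $\vertex{p}'$ lies on the geodesic $\path{\vertex{x}}{\vertex{z}}$, I have $d(\vertex{p}',\vertex{z}) = d(\vertex{x},\vertex{z}) - d(\vertex{x},\vertex{p}')$, and the triangle inequality gives $d(\vertex{p},\vertex{z}) \le d(\vertex{p},\vertex{p}') + d(\vertex{p}',\vertex{z}) \le \delta + d(\vertex{x},\vertex{z}) - d(\vertex{x},\vertex{y})/2 + 1/2$. The symmetric case, where $\vertex{p}'$ lies on $\path{\vertex{y}}{\vertex{z}}$, produces the same bound with $d(\vertex{y},\vertex{z})$ in place of $d(\vertex{x},\vertex{z})$; bounding by the maximum then yields the stated inequality.

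The main obstacle I anticipate is bookkeeping the off-by-$1/2$ slack correctly, since $\vertex{p}$ need not be exactly the midpoint. The correspondence is defined in terms of the Gromov inner product $(\vertex{x},\vertex{y})_{\vertex{z}}$ and the distance from the relevant corner, so I must check that $\vertex{p}$ (or the actual midpoint) lies within the parameter range $0 \le t \le (\cdot,\cdot)_{\vertex{z}}$ where the correspondence is defined at the appropriate corner, rather than in a degenerate regime; this is what guarantees a genuine corresponding point exists on one of the other two sides. Carefully tracking this, and propagating the $1/2$ through the two applications of $=^{1/2}$, should give exactly the factor of $1/2$ on the numerator and the $+\delta$ term in the claimed bound, with the $\max$ arising from the case split.
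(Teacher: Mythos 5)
Your proof is correct and follows essentially the same route as the paper's: take the geodesic triangle on $\vertex{x}$, $\vertex{y}$, $\vertex{z}$, locate the point corresponding to $\vertex{p}$ on one of the two sides ending at $\vertex{z}$, bound its distance to $\vertex{z}$ using the mid-vertex property, and finish with the triangle inequality and $\delta$-thinness. Your worry about the parameter range of the correspondence is handled automatically by the paper's definition (every point on a side of a $\delta$-thin triangle corresponds to a point on at least one other side), which is exactly what produces the two cases and the maximum.
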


\begin{proof}
If $\vertex{p}$ corresponds to a vertex $\vertex{q}$ on
$\path{\vertex{x}}{\vertex{z}}$, then $d(\vertex{q}, \vertex{z}) \le
d(\vertex{x}, \vertex{z}) - \frac{d(\vertex{x}, \vertex{y}) - 1}{2}$ so
$d(\vertex{p}, \vertex{z}) \le \frac{2d(\vertex{x}, \vertex{z}) -
d(\vertex{x}, \vertex{y}) + 1}{2} + \delta$.
Similarly, if $\vertex{p}$ corresponds to $\vertex{q}$ on
$\path{\vertex{y}}{\vertex{z}}$, then $d(\vertex{p}, \vertex{z}) \le \frac{2d(\vertex{y}, \vertex{z}) - d(\vertex{x}, \vertex{y}) + 1}{2} + \delta$. The
result follows.
\end{proof}

\begin{lemma} \label{lemma:replace}
Suppose $g, a_1, a_2, b_1, b_2 \in G$. Then $(a_1, a_2)^g = (b_1, b_2)$ if and only if $(a_1a_2, a_2)^g = (b_1b_2, b_2)$.
\end{lemma}

\subsection{Bounding element length in short lists}

This subsection is devoted to the proof of the following result.

\begin{proposition}
\label{prop:shorten_list}
There is an algorithm \textsc{ShortenWords} which, given a list
$A=(a_1,\ldots,a_m)$ of words, either:
\begin{itemize}
\item returns $c \in G$ such that, for all $1 \le i \le m$,
\[ |c^{-1}a_i a_{i+1} \cdots a_mc|_G \le
3^{m-i}\left(7L + \delta + \frac{1}{2}\right) \] or
\item returns integers $j$ and $k$ such that $1 \le j \le k \le m$ and
$a_j a_{j+1} \cdots a_k$ is of infinite order.
\end{itemize}

This algorithm runs in time $O(m^3\mu)$, where $\mu$ is the maximum length of the elements in $A$.
\end{proposition}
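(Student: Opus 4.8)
The plan is to process the list from right to left, maintaining a running product and a conjugating element that keeps all suffix-products short, and to bail out the moment an infinite-order element is discovered. Concretely, I would define the suffix products $s_i := a_i a_{i+1} \cdots a_m$ and attempt to build, by downward induction on $i$ from $m$ to $1$, a conjugator $c$ such that each $|c^{-1} s_i c|_G$ obeys the stated bound $3^{m-i}(7L + \delta + \tfrac{1}{2})$. The key structural tool is Lemma \ref{lemma:replace}, which lets me replace the pair $(s_{i+1}, a_{i+1})$-style data by products without changing conjugacy, so that at each stage I only need to control the single new suffix product $s_i = a_i s_{i+1}$ in terms of the already-controlled $s_{i+1}$.

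The heart of the induction is a single shortening step: given a word $w$ (playing the role of a conjugate of some suffix product) whose length I wish to bound, I would first run \textsc{TestInfOrder} (Corollary \ref{corollary:test_fin_ord}) on it. If $w$ has infinite order, I immediately return the indices $j,k$ with $s_j \cdots$ equal to (a conjugate of) $w$ and stop. If $w$ has finite order, then by Corollary \ref{corollary:test_fin_ord} together with Proposition \ref{prop:make_long} and Proposition \ref{prop:long_stays_long} we must have $|(\slex(w_C))_C|_G \le 2L$, which pins $w$ down to a \emph{cyclically short} element: a conjugate of $w$ has length bounded by roughly $4L$. Using Lemma \ref{lemma:triangle} at the mid-vertex of a geodesic for the new product, I would show that left-multiplying the controlled short suffix by $a_i$ and re-centring via the appropriate mid-vertex conjugator increases the length bound by at most the factor $3$ and additive constant, which is exactly where the $3^{m-i}$ and the constant $7L + \delta + \tfrac{1}{2}$ come from. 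The factor $3$ arises because the new length is bounded by (twice the old controlled length, from Lemma \ref{lemma:triangle}) plus the cyclic-shortness constant contributed by the finite-order case.

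For the running time, each of the $m$ inductive steps calls \textsc{TestInfOrder} and performs a constant number of shortlex reductions and conjugations on words whose length is $O(\mu)$ at the start; but because the bounds grow like $3^{m-i}$, the intermediate words can have length up to $O(3^m \mu)$. Reducing and testing a word of length $\ell$ costs $O(\ell)$ by Lemma \ref{lemma:short_lex} and Corollary \ref{corollary:test_fin_ord}, and doing this $m$ times with the multiplications nets the claimed $O(m^3 \mu)$ once one checks that the dominant contribution is polynomial rather than exponential in $m$ --- i.e. the exponential length growth is \emph{absorbed} because at each stage the element is immediately re-shortened back to a constant-times-$L$ representative before the next multiplication. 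The main obstacle I anticipate is precisely this bookkeeping: proving that although the stated \emph{a priori} bound grows exponentially, the actual words manipulated by the algorithm stay of length $O(\mu)$ after re-centring, so that the per-step cost is $O(\mu)$ and not $O(3^m \mu)$. Getting the mid-vertex recentring (via Lemma \ref{lemma:triangle}) to yield a genuinely \emph{shorter} conjugate at each step, rather than merely a bounded one, is the delicate point that makes the cubic-time claim go through.
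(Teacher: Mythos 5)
Your plan has the right ingredients (the cyclic-shortness dichotomy from Proposition \ref{prop:short_conj} to either bail out with an infinite-order element or conjugate the current product down to bounded length, plus Lemma \ref{lemma:triangle} to control re-centring), but the inductive step as you set it up does not close, and this is not mere bookkeeping. You process the list right-to-left and try to maintain all suffix products $s_j = a_j\cdots a_m$ simultaneously. When $s_i = a_i s_{i+1}$ arrives, any conjugator $d$ that shortens $s_i^c$ must absorb roughly half of the geodesic of $s_i^c$, so $|d|$ is in general of order $\mu$, not bounded. After replacing $c$ by $cd$, every previously controlled product becomes $(s_j^c)^d$, and conjugating a short element by a word of length $O(\mu)$ can produce an element of length $O(\mu)$: no cancellation is available, because $s_j$ does not contain the new factor $a_i$ at all, so the geodesic of $s_j^c$ bears no relation to the anchor (the mid-vertex of $\slex(s_i^c)$) of your conjugator; Lemma \ref{lemma:replace} is about equivalence of list-conjugacy problems and cannot substitute for re-verifying the older products after each update. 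The paper's algorithm avoids exactly this trap by processing left-to-right and maintaining, at stage $k$, the interval products $a_i\cdots a_k$ ending at the current index: every controlled product then has the new element $g := \slex(c_{k-1}^{-1}a_kc_{k-1})$ as its rightmost factor, the single update $g_L$ is anchored to $g$ and is the same for all $i$, and in the application of Lemma \ref{lemma:triangle} the terms involving $|g|$ cancel, giving the $\mu$-free recursion $K_{i,k+1}=3K_{i,k}+10L+2\delta+1$. Note also that your bounds come out reversed: in a right-to-left scheme the product introduced first ($s_m$) is degraded by every later update while $s_1$ is freshest, so at best you would prove $|c^{-1}s_jc|_G \le 3^{j-1}(\cdots)$; the true mirror of the paper's algorithm controls prefix products $a_1\cdots a_j$, not the suffix products in the statement. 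Finally, closing the induction requires cyclic-shortness checks on all interval products $a_j\cdots a_k$ (this is why the second bullet of the statement returns general $j\le k$, and where one factor of $m$ in the running time comes from), not only on the suffix products you test.

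The complexity claim also has a gap. You assert that the words manipulated stay of length $O(\mu)$ after re-centring and flag this as the main obstacle; that statement is both unproved and not the right one. The $3^{m-i}$ bounds are irrelevant to the running time (a shortlex reduction is never longer than the word it reduces), and the real danger is the length of the accumulated conjugator: naively $|c_k| \le 2|c_{k-1}| + O(\mu)$, which is exponential in $k$. The paper disposes of this with a separate application of Lemma \ref{lemma:triangle} to the triangle with corners $\vertex{1}$, $\evaluate{c_{k-1}}$, $\evaluate{a_kc_{k-1}}$, proving $|c_k| \le |c_{k-1}| + \frac{\mu}{2} + \delta + 1$, hence $|c_k| \le k\left(\frac{\mu}{2}+\delta+1\right)$; the words fed to the reducer then have length $O(k\mu)$, and $O(k)$ checks per stage over $m$ stages give $O(m^3\mu)$. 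Your own accounting ($m$ steps at cost $O(\mu)$ each) would give $O(m\mu)$, inconsistent with the $O(m^3\mu)$ you claim to recover, which signals that the per-step cost analysis is not yet correct.
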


\begin{proof}
The algorithm is presented below. The remainder of the proof will be devoted to proving that it works as claimed.

\begin{algorithmic}[1]
\label{algorithm:shorten_list}
\Function{ShortenWords}{$[a_1, \ldots, a_m]$}
\State $c_0 \gets 1$
\For{$k:=1$ to $m$}
	\For{$j \in \{1, \ldots, k\}$}
		\If{$|(\pi(c_{k-1}^{-1}a_j \cdots a_kc_{k-1}))_C|_G > 2L$} \label{line:conj_finite_check}
			\State \textbf{return} ${\sf null}, j, k$ \Comment{$a_j\cdots a_k$ is of infinite order} \label{line:conj_finite_return_infinite}
		\EndIf
	\EndFor
	\State $c_k \gets \pi(c_{k-1}(\pi(c_{k-1}^{-1}a_kc_{k-1}))_L)$ \label{line:conj_finite_extend}
\EndFor
\State \textbf{return} $c_m, {\sf null}, {\sf null}$
\EndFunction
\end{algorithmic}

If the function finds and returns integers $j, k$ on Line \ref{line:conj_finite_return_infinite}, then a conjugate $g$ of $a_j \cdots a_k$ satisfies
$|\slex(g)_C| > 2L$, and so $g$ is of infinite order by Proposition \ref{prop:short_conj}. But then $a_j \cdots a_k$ has infinite order also and the algorithm is correct to return $j, k$.
The condition on Line \ref{line:conj_finite_check} can therefore be assumed
always to fail.

We show first that
$|c_k| \le k(\frac{\mu}{2}+\delta+1)$. Consider a geodesic triangle with corners $\vertex{1}$, $\vertex{b}:= \evaluate{c_{k-1}}$ and $\vertex{c}:=\evaluate{a_kc_{k-1}}$. Pick shortlex reduced words to label the paths $\path{\vertex{1}}{\vertex{b}}$, $\path{\vertex{b}}{\vertex{c}}$ and $\path{\vertex{1}}{\vertex{c}}$. Let $\vertex{p}:=\evaluate{c_{k-1}(\slex(c_{k-1}^{-1}a_kc_{k-1}))_L}$, which is a mid-vertex of $\path{\vertex{b}}{\vertex{c}}$ as illustrated in Figure \ref{figure:extend_c}. Since $c_k$ is a geodesic from $\vertex{1}$ to $\vertex{p}$,
we have by Lemma \ref{lemma:triangle}
\begin{eqnarray*}
        |c_k|
& \le & \frac{2\max\{d(\vertex{1}, \vertex{b}), d(\vertex{1}, \vertex{c})\} - d(\vertex{b}, \vertex{c}) + 1}{2}+\delta \\
& \le & \frac{2\max\{|c_{k-1}|,|a_kc_{k-1}|_G\}-|c_{k-1}^{-1}a_kc_{k-1}|_G+1}{2}+\delta.
\end{eqnarray*}

\begin{figure}
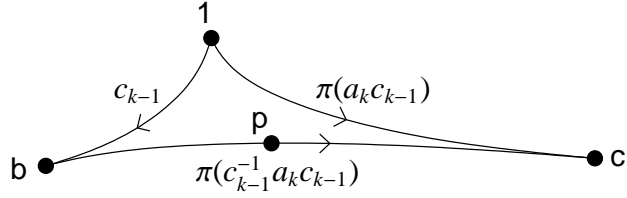

\begin{center}
\input extend_c.pstex_t
\caption{Extending $c$.}
\label{figure:extend_c}
\end{center}
\end{figure}

Suppose $|c_{k-1}| \ge |a_kc_{k-1}|_G$. Notice that $|c_{k-1}^{-1}a_kc_{k-1}|_G \ge |c_{k-1}| - |a_kc_{k-1}|_G$ by the triangle inequality, so
\begin{eqnarray*}
        |c_k|
& \le & \frac{2|c_{k-1}| - |c_{k-1}| + |a_kc_{k-1}|_G + 1}{2} + \delta
  =  \frac{|c_{k-1}| + |a_kc_{k-1}|_G + 1}{2} + \delta \\
& \le & \frac{2|c_{k-1}| + |a_k| + 1}{2} + \delta\
\le |c_{k-1}| + \frac{|a_k|}{2} + \delta + 1.
\end{eqnarray*}
Similarly, if $|c_{k-1}| < |a_kc_{k-1}|_G$ then
\begin{eqnarray*}
        |c_k|
& \le & \frac{2|a_kc_{k-1}|_G - |a_kc_{k-1}|_G + |c_{k-1}|+1}{2} + \delta
 =  \frac{|a_kc_{k-1}|_G + |c_{k-1}|+1}{2} + \delta \\
& \le & \frac{|a_k| + 2|c_{k-1}|+1}{2} + \delta
 \le  |c_{k-1}| + \frac{|a_k|}{2} + \delta + 1.
\end{eqnarray*}
So in either case $|c_k| \le |c_{k-1}| + \frac{|a_k|}{2} + \delta + 1$,
and induction on $k$ gives
$|c_k| \le k(\frac{\mu}{2} + \delta + 1)$, as required.

We can now show that the function completes in time $O(m^3\mu)$.
Note that
$$|c_{k-1}^{-1}a_j\cdots a_kc_{k-1}| \le k\mu+2|c_{k-1}| \le 2k(\mu + \delta + 1)$$
so the checks on Line \ref{line:conj_finite_check} each run in time $O(k\mu)$. There are $k$ such steps per loop and a total of $m$ loops, so the overall running time is in $O(m^3\mu)$ for this step.
Similarly, $|c_{k-1}c_{k-1}^{-1}a_kc_{k-1}| \in O(k\mu)$ so Line \ref{line:conj_finite_extend} runs in time $O(k\mu)$ and the overall time taken in this step is in $O(m^2\mu)$. Therefore the whole algorithm runs in time $O(m^3\mu)$ as required.

It remains to show that the bound on the length of the elements $(a_i\cdots a_m)^{c_m}$ is satisfied.
For each $k\in\{1,\ldots,m\}$, define $K_{k,k}:=2L$, and let
$K_{i, k+1}: = 3K_{i,k} + 10L + 2\delta + 1$ for $1 \le i\le k$.
We shall use induction on $k$ to show that $|c_k^{-1}a_i \cdots a_k c_k|_G \le K_{i, k}$ for any $1 \le i \le k$ and then show that $K_{i,m}$ is within the required bound.

In the case $k = i$, we have  $a_k^{c_k} =_G d^{d_L} =_F d_C$ where $d = \slex(a_k^{c_{k-1}})$. But then Line \ref{line:conj_finite_check} ensures that $|a_k^{c_k}|_G \le K_{k,k} = 2L$.

Now suppose that, for some $k$, the inequality
$|c_k^{-1}a_i\cdots a_k c_k|_G\le K_{i,k}$ is satisfied for all $1 \le i \le k$. Showing that $|c_{k+1}^{-1} a_i \ldots a_{k+1}c_{k+1}|_G \le K_{i,k+1}$ for each $i$ will complete the induction.

Pick some specific $i$, and let $e := \slex(c_k^{-1}a_i \ldots a_{k+1} c_k)$ and $g := \slex(c_k^{-1}a_{k+1}c_k)$. Notice that $c_{k+1} =_G c_kg_L$ and so
\[
        (a_i \cdots a_{k+1})^{c_{k+1}}
 =_G  e^{c_k^{-1}c_{k+1}}
 =_G  e^{g_L}
 =_G  (e_C)^{e_L^{-1}g_L}
 =_G  (e_C)^{e_L^{-1}g_R^{-1}g_C}.
\]

The checks on Line \ref{line:conj_finite_check} ensure that $|e_C|_G \le 2L$,
and $|g_C|_G \le 2L$, so we know that
$\left| (e_C)^{e_L^{-1}g_R^{-1}g_C} \right|_G \le 2|g_Re_L|_G + 6L$.
Hence the induction will be complete if it can be shown that \begin{equation} \label{eqn:short_grel}
|g_Re_L|_G \le \frac{3}{2}K_{i,k} + 2L + \delta + \frac{1}{2}. \end{equation}

Let $f := \slex(c_k^{-1}a_i\ldots a_k c_k) =_G eg^{-1} $ and recall that $|f|\le K_{i,k}$ by the inductive assumption. Consider a geodesic triangle with corners $\vertex{1}$, $\vertex{b} := \evaluate{g}$ and $\vertex{c} := \evaluate{ge_L}$ illustrated in Figure \ref{figure:shorten_tri}. Note that
\[
        d(\vertex{1}, \vertex{c})
  =   |ge_L|_G \\
  =   |f^{-1} ee_L|_G \\
 \le  |ee_L|_G + K_{i,k}\\ 
  =   |e_Le_C|_G + K_{i,k},
\]
but $|e_C|_G \le 2L$ so
\[
        d(\vertex{1}, \vertex{c})
 \le  |e_L| + K_{i,k} + 2L \\
 \le  \frac{|e|}{2} + K_{i,k} + 2L \\
 \le  \frac{|f| + |g|}{2} + K_{i,k} + 2L.
\]

\begin{figure}
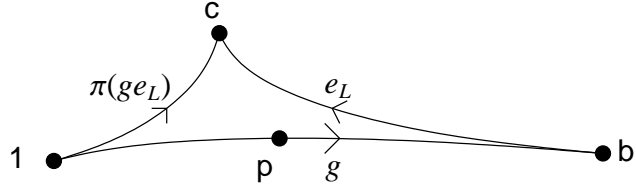

\begin{center}
\input shorten_tri.pstex_t
\caption{Bounding $g_Re_L$}
\label{figure:shorten_tri}
\end{center}
\end{figure}

Also, $d(\vertex{b}, \vertex{c}) = |e_L| \le \frac{|e|}{2} \le \frac{|f| + |g|}{2}$.

Pick the mid-vertex $\vertex{p} := \evaluate{g_L}$ on $\path{\vertex{1}}{\vertex{b}}$. Lemma \ref{lemma:triangle} implies that
\begin{eqnarray*}
        |g_R e_L|_G
&  =  & d(\vertex{p}, \vertex{c}) 
\le \frac{2 \max\{d(\vertex{1}, \vertex{c}), d(\vertex{b}, \vertex{c})\} -d(\vertex{1}, \vertex{b}) + 1}{2} + \delta \\
& \le & \frac{2 \max\{\frac{|f|+|g|}{2} + 2L + K_{i, k}, \frac{|f|+|g|}{2}\} -|g| + 1}{2} + \delta \\
&  =  & \frac{2(2L + K_{i, k}) + |g|+|f|-|g| + 1}{2} + \delta
 =  \frac{2(2L + K_{i, k}) + |f| + 1}{2} + \delta \\
& \le & \frac{3}{2}K_{i, k} + 2L + \delta + \frac{1}{2},
\end{eqnarray*} as required by (\ref{eqn:short_grel}).

This completes the proof that $|(a_i \cdots a_k)^{c_k}|_G \le K_{i,k}$
for each $1 \le i \le k \le m$, and to get the required bound on the length
of $(a_i\cdots a_m)^{c_m}$ it suffices to show that
$K_{i,k} \le 3^{k-i}(7L+\delta+\frac{1}{2})$, and then put $k=m$.
But, since $K_{i,k} = 3K_{i,k-1} + 10L + 2\delta + 1$,
a straightforward induction on $k$ starting at $k=i$ yields
$$K_{i,k} \le 
   3^{k-i} \times 2L + (3^{k-i}-1)\left(5L + \delta + \frac{1}{2}\right),$$
from which the required bound follows, and the proof is complete.
\end{proof}

Note that by repeated application of Lemma \ref{lemma:replace}, we see that
the conjugacy problems are equivalent for the lists $(a_1, \ldots, a_m)$ and
$(b_1, \ldots, b_m)$, and for the lists \linebreak $(a_1', a_2', \ldots, a_m')$ and
$(b_1', b_2', \ldots, b_m')$, where $a_i' = a_i \cdots a_m$ and
$b_i' = b_i \cdots b_m$.

\subsection{Some worse than linear time algorithms}

This subsection provides a toolbox of results that solve various problems
involving conjugacy and centralisers of lists in worse than linear time.
They are useful, as the previous subsection gives a method of bounding the lengths of elements in a list in terms of the number of elements.

\begin{proposition}\cite[Corollary 3.2]{bridson2005conjugacy}
\label{proposition:small_cent}
Let $(a_1, \ldots, a_m)$ be a list of words representing pairwise distinct
finite order elements of $G$. Suppose that $x \in G$ satisfies \[ |x|_G \ge (2k + 5)^{4\delta+2}(l + 2\delta) \] where $l = \max\{|a_1|_G, |a_1^x|_G, \ldots, |a_m|_G, |a_m^x|_G\}$ and $k$ is the number of generators of $G$.
Then $m \le V^4$.
\end{proposition}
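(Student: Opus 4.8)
The plan is to translate the conjugacy hypothesis into a statement about how far each $a_i$ moves a single long geodesic, and then to bound the number of \emph{distinct torsion} elements that can move such a geodesic by a bounded amount. Fix a geodesic path $\sigma$ from $\vertex{1}$ to $\evaluate{x}$, so that $|\sigma| = |x|_G$. For each $i$ consider the geodesic quadrilateral with corners $\vertex{1}$, $\evaluate{a_i}$, $\evaluate{a_ix}$ and $\evaluate{x}$: its two ``short'' sides are labelled $a_i$ and $a_i^x$, each of length at most $l$, and its two ``long'' sides are $\sigma$ and the translate $\evaluate{a_i}\cdot\sigma$. A geodesic quadrilateral in a $\delta$-hyperbolic space is $2\delta$-thin, so any interior vertex $g=\sigma(t)$ lying farther than $l$ from each short side is within $2\delta$ of the opposite long side; since the endpoints of the two long sides are at distance at most $l$, matching parameters yields $d(g,\evaluate{a_i}g)=|a_i^g|_G \le l+2\delta=:l'$. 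Thus every $a_i$ displaces each interior vertex of $\sigma$ by at most $l'$. (This is a clean thin-quadrilateral estimate, in the same spirit as Lemma~\ref{lemma:triangle}.)

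The second step exploits that $|x|_G$ is large compared with $l'$ in order to extract rigidity from finite order. Because $a_i$ is elliptic it can have no asymptotic translation along $\sigma$: a nonzero parameter shift would be multiplied by $N$ under $a_i^N$, contradicting $a_i^N=_G 1$ once the relevant multiple exceeds the length available on $\sigma$. Hence, relative to a reference vertex $g^\ast=\sigma(t^\ast)$ chosen deep in the interior, the conjugate $c_i:=(g^\ast)^{-1}a_i g^\ast$ not only has length at most $l'$ but acts on the edges of $\sigma$ incident to $g^\ast$ in a way that is recorded by bounded \emph{local} data --- an incoming and an outgoing germ, each living in a $2\delta$-ball around the relevant vertex. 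Each germ ranges over at most $V$ possibilities, so the combined datum at $g^\ast$ takes at most $V^2$ values; recording this datum at two far-separated reference vertices and combining caps the total at $V^4$. The base $2k+5$ and exponent $4\delta+2$ in the length hypothesis are exactly what guarantee enough well-separated interior vertices (roughly $V^2$ of them, since $V^2\le(2k)^{4\delta+2}\le(2k+5)^{4\delta+2}$) for these germs to stabilise and for a pigeonhole to bite.

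Finally one argues that this encoding separates the $a_i$. The left-multiplication action of $G$ on the vertices of $\Gamma$ is free, so a single coincidence $\evaluate{a_i}g^\ast=\evaluate{a_j}g^\ast$ already forces $a_i=_G a_j$; the role of the torsion/pigeonhole argument is to show that two indices producing the same pair of germs must in fact induce the same displacement (not merely displacements of the same bounded size), whence $a_i=_G a_j$, contradicting distinctness. Injectivity of $i\mapsto(\text{germs})$ into a set of size at most $V^4$ then gives $m\le V^4$. The main obstacle is precisely this last step: the displacement bound of Step~1 is routine, but converting ``finite-order coarse stabiliser of a long geodesic'' into a bound that is \emph{independent of $l$} requires the torsion-specific rigidity that collapses the a priori exponential-in-$l'$ count of short conjugates down to the constant $V^4$.
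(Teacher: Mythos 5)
First, note that the paper itself contains no proof of this statement: it is imported from \cite{bridson2005conjugacy} (Corollary 3.2), with only the remark that the proof given there actually yields $m \le V^4$ in place of $(2k)^{8\delta}$. So your attempt has to stand on its own, and as written it does not: it is a strategy whose decisive step is missing, as you yourself concede in your final sentence (``the main obstacle is precisely this last step''). Your Step 1 is fine modulo constants: the thin-quadrilateral argument does show that every vertex $g$ deep inside a geodesic $\sigma$ from $\vertex{1}$ to $\evaluate{x}$ satisfies $d(g,\evaluate{a_i}g) \le l + 4\delta$ (matching parameters costs $2\delta$ twice, so your $l+2\delta$ is slightly off, but harmlessly so). The problem is that everything after this is assertion. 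The ``incoming and outgoing germs'' are never defined as mathematical objects; no argument is given that each ranges over at most $V$ values; and, most importantly, the injectivity of the assignment $a_i \mapsto$ (germ data) --- the claim that two distinct finite-order elements satisfying the hypotheses cannot produce the same local data at the two reference vertices --- is exactly what you defer and never prove. Since the entire content of the proposition is that the number of possible $a_i$ is bounded independently of $l$ (your Step 1 alone allows all elements conjugate into the $(l+4\delta)$-ball, an exponential-in-$l$ count), a proof that omits this step has omitted the theorem.

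Moreover, the one concrete mechanism you offer for the torsion rigidity does not work as stated. You argue that a nonzero translation $\tau$ of $a_i$ along $\sigma$ is impossible because it ``would be multiplied by $N$ under $a_i^N$, contradicting $a_i^N =_G 1$ once the relevant multiple exceeds the length available on $\sigma$.'' This is backwards: once the iterates drift past the subsegment on which your estimates hold, the approximation $a_i^n\sigma(t) \approx \sigma(t+n\tau)$ is no longer derivable, and no contradiction arises. Note also that the powers $a_i^n$ do not inherit the hypotheses of Step 1 (nothing bounds $|a_i^n|_G$ or $|(a_i^n)^x|_G$ by $l$), so the quadrilateral estimate cannot simply be reapplied to them; one must instead iterate the fellow-travelling of $a_i\sigma$ with $\sigma$, whose error grows linearly in $n$. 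A workable completion runs along different lines: show that if the coarse translation of $a_i$ along the middle of $\sigma$ exceeded a suitable multiple of $\delta$, then the concatenation of the segments $a_i^n[p,a_ip]$ would form a local quasigeodesic, forcing $a_i$ to have positive stable translation length and hence infinite order. With the constants handled carefully this bounds the displacement of a single deep vertex $p$ by $8\delta$, after which the proposition follows in one line, without your two-point bookkeeping: $a_i \mapsto p^{-1}a_ip$ is injective, and its image lies in the ball of radius $8\delta$ about the identity, which has at most $V^4$ elements because $|B_{r+s}(1)| \le |B_r(1)|\cdot|B_s(1)|$. In short, what you label routine is routine, but what you label the obstacle is the theorem, and it is absent.
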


The statement in \cite{bridson2005conjugacy} says that $m \le (2k)^{8 \delta}$,
but the proof there does in fact prove the statement here.
Proposition \ref{proposition:small_cent} implies that the centraliser of a long
list of distinct finite order elements is finite. Theorem III.$\Gamma$.3.2 of \cite{BriHae99} then provides a bound on the number of elements in a finite
subgroup:

\begin{proposition}
\label{proposition:finite_small_ball}
If $G$ is a $\delta$-hyperbolic group and $H$ is a finite subgroup of $G$ then there is an element $g \in G$ with $H^g$ contained entirely within a ball in the Cayley graph of $G$ of radius $4\delta+2$.
\end{proposition}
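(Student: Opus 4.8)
The plan is to give a direct quasi-center argument using Lemma \ref{lemma:triangle} as the sole hyperbolic ingredient, rather than relying on the external reference. I would first regard $H$ as a finite set of vertices of $\Gamma$ and observe that, because $H$ is a subgroup, left multiplication by any $h_0 \in H$ is an isometry of $\Gamma$ that permutes this set. I would then define the radius function $r(\vertex{x}) := \max_{h \in H} d(\vertex{x}, h)$ on the vertices and check that it is $H$-invariant: for $h_0 \in H$, applying the isometry given by $h_0^{-1}$ and using $h_0^{-1}H = H$ gives $r(h_0\vertex{x}) = r(\vertex{x})$.

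Next I would exploit properness. Since $r(\vertex{x}) \to \infty$ as $\vertex{x}$ leaves any bounded region, $r$ attains a minimum value $\rho$ on the vertex set, and I would fix a minimizing vertex $c$. By $H$-invariance every vertex $hc$ is again a minimizer, so the entire orbit $Hc$ consists of quasi-centers.

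The crux is to bound the diameter of the set of minimizers, and this is exactly what Lemma \ref{lemma:triangle} delivers. Given two minimizers $\vertex{x}$ and $\vertex{y}$, I would take a mid-vertex $\vertex{p}$ of a geodesic $\path{\vertex{x}}{\vertex{y}}$, apply Lemma \ref{lemma:triangle} with $\vertex{z} = h$ for each $h \in H$, bound each $\max\{d(\vertex{x}, h), d(\vertex{y}, h)\}$ by $\rho$, and take the maximum over $h$ to obtain
$$ r(\vertex{p}) \le \rho - \frac{d(\vertex{x}, \vertex{y})}{2} + \frac{1}{2} + \delta. $$
Since $\vertex{p}$ is a vertex, $r(\vertex{p}) \ge \rho$, and rearranging forces $d(\vertex{x}, \vertex{y}) \le 2\delta + 1$; in particular all points of $Hc$ are pairwise within $2\delta+1$.

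Finally I would conjugate by $c$: left-invariance of the word metric gives $|c^{-1}hc|_G = d(c, hc) \le 2\delta+1 \le 4\delta+2$ for every $h \in H$, so $H^c$ lies in the closed ball of radius $4\delta+2$ about $\vertex{1}$ (indeed the argument yields the sharper radius $2\delta+1$). The only genuinely hyperbolic step is the diameter bound on the quasi-centers, and since Lemma \ref{lemma:triangle} already packages the required thin-triangle estimate I do not expect any serious obstacle; the mild bookkeeping is simply keeping track of the $+\tfrac{1}{2}$ that arises from passing between a midpoint and a mid-vertex.
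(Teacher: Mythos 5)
Your proof is correct, and it takes a genuinely different route from the paper: the paper does not prove this proposition at all, but simply cites Theorem III.$\Gamma$.3.2 of Bridson--Haefliger, whereas you give a self-contained argument whose only hyperbolic input is the paper's own Lemma \ref{lemma:triangle}. What you have written is essentially the quasi-centre argument that underlies the cited theorem, but specialised to the Cayley graph, where it simplifies: since $X$ is finite and distances between vertices are integers, the radius function $r(\vertex{x}) = \max_{h \in H} d(\vertex{x}, h)$ attains its minimum exactly on the vertex set, so you can work with genuine minimisers rather than the $\epsilon$-quasi-centres needed in a general hyperbolic space. Each step checks out: $H$-invariance of $r$ follows from left-invariance of the word metric together with $h_0^{-1}H = H$; the orbit $Hc$ of a minimiser therefore consists of minimisers; Lemma \ref{lemma:triangle} applied with $\vertex{z} = h$, with both entries of the maximum bounded by $\rho$, gives $\rho \le r(\vertex{p}) \le \rho - \tfrac{1}{2}d(\vertex{x},\vertex{y}) + \tfrac{1}{2} + \delta$ and hence $d(\vertex{x},\vertex{y}) \le 2\delta + 1$; and $|c^{-1}hc|_G = d(c, hc)$ converts this into the conclusion, centred at $\vertex{1}$. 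The paper's approach buys brevity and defers to a standard reference; yours buys a self-contained proof inside the paper's own framework and in fact yields the sharper radius $2\delta+1$, so the stated bound $4\delta+2$ holds with room to spare.
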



\begin{corollary}
\label{cor:short_cent}
There is a constant $R$ and an algorithm \textsc{FindCentraliserExp} that takes
as input a list $A$ consisting of $n > V^4$ words, all of which represent pairwise distinct finite order elements of $G$, returns the centraliser $C$ of $A$, and runs in time $O(n\mu R^\mu)$ where $\mu$ is an upper bound on the length of words in $A$. All elements of $C$ have length in $O(\mu)$ and the number of elements in $C$ is in $O(1)$.
\end{corollary}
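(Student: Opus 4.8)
The plan is to show that the centraliser $C = C_G(A)$ consists of elements of length $O(\mu)$, to locate them by a brute-force search over all words of bounded length, and then to obtain the $O(1)$ bound on $|C|$ from a separate structural fact about finite subgroups.

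First I would bound the length of centraliser elements. If $x \in C$ then $a_i^x =_G a_i$ for every $i$, so $|a_i^x|_G = |a_i|_G \le \mu$, and hence the quantity $l$ appearing in Proposition \ref{proposition:small_cent} equals $\max_i |a_i|_G \le \mu$. Since the $a_i$ are pairwise distinct finite order elements and $n > V^4$, the contrapositive of Proposition \ref{proposition:small_cent} forbids $|x|_G \ge (2k+5)^{4\delta+2}(\mu + 2\delta)$; thus every $x \in C$ satisfies $|x|_G < L_0$, where $L_0 := (2k+5)^{4\delta+2}(\mu + 2\delta) = O(\mu)$. In particular $C$ is finite, and every element of $C$ has a representative word of length $O(\mu)$, as required.

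Next I would describe the algorithm itself. Set $R := |X|^{(2k+5)^{4\delta+2}}$, a constant depending only on $G$ and $X$. Enumerate every word $x$ over $X$ of length less than $L_0$; there are $O(|X|^{L_0}) = O(R^{\mu + 2\delta}) = O(R^\mu)$ of these. For each candidate, test whether $x$ centralises all $a_i$ by checking, via the shortlex algorithm of Lemma \ref{lemma:short_lex}, whether $x^{-1} a_i x a_i^{-1} =_G 1$ for $i = 1, \ldots, n$; each such word has length $\le 2L_0 + \mu = O(\mu)$, so each test costs $O(\mu)$ and the $n$ tests for a single $x$ cost $O(n\mu)$. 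Collecting the shortlex-reduced forms of the words that pass all tests yields a set of distinct representatives for $C$, distinctness following from uniqueness of the shortlex normal form. The length bound of the previous paragraph guarantees that every element of $C$ is represented by some enumerated word, so this set is exactly $C$; the total running time is $O(R^\mu) \cdot O(n\mu) = O(n\mu R^\mu)$.

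Finally, to bound $|C|$ by a constant I would invoke Proposition \ref{proposition:finite_small_ball}. The length bound alone only gives $|C| = O(R^\mu)$, which is far too weak; the point is that $C$ is a \emph{finite subgroup} of $G$, so by Proposition \ref{proposition:finite_small_ball} some conjugate $C^g$ lies entirely inside the ball of radius $4\delta + 2$ about $\vertex{1}$, whence $|C| = |C^g| \le |B_{4\delta+2}(1)| \in O(1)$. The main obstacle is precisely this last point: obtaining $|C| \in O(1)$ rather than the naive exponential bound requires the structural input about finite subgroups, whereas the remainder is a routine combination of the length bound with a brute-force search.
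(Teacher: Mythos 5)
Your proposal is correct and follows essentially the same route as the paper's own proof: bound $|x|_G$ for $x \in C_G(A)$ by the contrapositive of Proposition \ref{proposition:small_cent}, enumerate all words of length $O(\mu)$ and test each against the list, and obtain $|C| \in O(1)$ from Proposition \ref{proposition:finite_small_ball}. If anything, you are slightly more careful than the paper in distinguishing the length-bound constant $(2k+5)^{4\delta+2}$ from the exponential base $R$ in the word count, which the paper glosses over by reusing the same letter.
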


\begin{proof} 
Suppose that $A = (a_1, \ldots, a_n)$ is such a list. If $x \in C$,
then $a_i^x = a_i$ for all $1 \le i \le n$, so $l = \mu$ in Proposition \ref{proposition:small_cent}. Hence $|x|_G < R(\mu + 2\delta)$, where $R := (2k + 5)^{4\delta+2}$, since $n > V^4$.

Since the elements in $C$ are of bounded length, $C$ is finite. Proposition \ref{proposition:finite_small_ball} implies that $C$ can be conjugated into a ball in $\Gamma$ of radius $4\delta + 2$, and in particular the number of elements in $C$ is bounded by a constant depending only on $G$.

Thus the algorithm \textsc{FindCentraliserExp} now just needs to check for each word $w$ of length at most $R(\mu+2\delta)$ whether $A^w =_G A$. There are at most $R^{\mu+2\delta} \in O(R^\mu)$ such words, and checking each word takes time $O(n\mu)$, so the algorithm runs in time $O(n\mu R^\mu)$ as required.
\end{proof}

Thus there is a method of computing the centraliser of a long list of finite
order words of bounded length, whose complexity is linear in the
length of the list.
Thus we can compute centralisers of lists of short elements.
The following result enables us to test conjugacy between lists of short
elements.

\begin{proposition}\cite[Theorem 3.3]{bridson2005conjugacy}
Let $A = (a_1, \ldots, a_m)$ and $B = (b_1, \ldots, b_m)$ be sets of
finite order elements in $G$. If $A$ and $B$ are conjugate then there exists a word $x$ with
$A^x=_G B$  and
\[ |x|_G \le (2k+5)^{4\delta+2}(\mu+2\delta) + V^{4V^4}, \]
where $\mu$ is the maximum length of an element in either list and $k$ is the number of generators of $G$.
\end{proposition}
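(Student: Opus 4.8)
The plan is to reduce to a list of pairwise distinct elements and then to split into two regimes according to how many distinct elements occur, with the two summands of the bound arising from the two regimes. First I would observe that if $a_i =_G a_j$ then any $x$ with $A^x =_G B$ forces $b_i =_G a_i^x =_G a_j^x =_G b_j$; hence, after discarding repeated entries, I may assume that the $a_i$ (and correspondingly the $b_i$) are pairwise distinct, since a conjugator for the reduced lists is a conjugator for the originals and conversely. This reduction leaves $\mu$ unchanged, and for any list conjugator $x$ we have $|a_i^x|_G = |b_i|_G \le \mu$, so the quantity $l = \max_i\{|a_i|_G, |a_i^x|_G\}$ of Proposition \ref{proposition:small_cent} satisfies $l \le \mu$.

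Next I would treat the regime in which more than $V^4$ distinct elements remain. Here Proposition \ref{proposition:small_cent} does all the work, in contrapositive form: since the reduced list has more than $V^4$ pairwise distinct finite order elements, no $x$ with $A^x =_G B$ can satisfy $|x|_G \ge (2k+5)^{4\delta+2}(l + 2\delta)$, and using $l \le \mu$ every conjugator ---  in particular a shortest one --- has length strictly below $(2k+5)^{4\delta+2}(\mu + 2\delta)$, which is the first summand of the claimed bound (the constant term being unnecessary in this regime).

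The substantive case is when at most $V^4$ distinct elements remain, since then the centraliser $C = C_G(a_1, \ldots, a_m)$ may well be infinite and a shortest conjugator need not be controlled by Proposition \ref{proposition:small_cent}. I would fix any conjugator $g$, so that the full set of conjugators is the coset $gC$, and aim to exhibit a short representative. The strategy is to take a shortest conjugator $x$, walk along a geodesic $\gamma$ from $\vertex{1}$ to $x$, and record at each integer position $s$ the tuple of conjugates $(a_1^{\gamma(s)}, \ldots, a_m^{\gamma(s)})$. Minimality of $x$ should keep these conjugates short, bounded in terms of $\mu$ and $\delta$ by the same quadrilateral-thinness estimates underlying Proposition \ref{proposition:small_cent}; once they are short they lie, after a bounded conjugation, in a ball of radius $4\delta+2$ by Proposition \ref{proposition:finite_small_ball}. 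As there are at most $V^4$ of them and each ranges over a set of size bounded by a fixed power of $V$, the number of distinct configurations is at most $V^{4V^4}$. If $\gamma$ exceeds this count then two positions $s < s'$ yield the same configuration, whence $\gamma(s)^{-1}\gamma(s') \in C$ and the intervening subpath may be excised to produce a strictly shorter conjugator, contradicting minimality. This pigeonhole step bounds the ``combinatorial'' part of $x$ by $V^{4V^4}$, while the geometric cost of reaching the short configurations contributes the first summand.

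The hard part is this last case, and specifically making the configuration count rigorous: one must pin down precisely how short the intermediate conjugates $a_i^{\gamma(s)}$ are for a shortest $x$, verify that excising a centralising subpath genuinely \emph{shortens} the conjugator rather than merely preserving its length, and check that the number of admissible bounded configurations of at most $V^4$ elements is at most $V^{4V^4}$ with exactly this constant. The interplay between the possibly infinite centraliser and the finite-subgroup bound of Proposition \ref{proposition:finite_small_ball} is the crux, and is precisely what forces the separate, constant, second summand.
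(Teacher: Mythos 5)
Your skeleton --- discard repeats, split on whether more than $V^4$ pairwise distinct elements survive, dispose of the large case by the contrapositive of Proposition \ref{proposition:small_cent} with $l \le \mu$, and attack the small case by pigeonhole-and-excision along a geodesic for a shortest conjugator --- has the right shape, and it is essentially the shape of the argument in \cite{bridson2005conjugacy}. (Note that this paper gives no proof of its own: the statement is quoted from Theorem 3.3 of \cite{bridson2005conjugacy}, with only the remark that the constant $(2k)^{8\delta}$ there may be replaced by $V^4$.) Your first two steps are correct as written.

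The genuine gap is in the counting step of the small case, precisely the point you flag as ``the hard part''. The quadrilateral-thinness estimate gives only $|a_i^{\gamma(s)}|_G \le \mu + 4\delta$ for intermediate $s$ (and this needs no minimality of $x$), so the tuples $(a_1^{\gamma(s)},\ldots,a_m^{\gamma(s)})$ a priori range over a set of size roughly $|X|^{(\mu+4\delta)V^4}$, which grows exponentially with $\mu$. Pigeonholing on these configurations therefore bounds the conjugator by a quantity \emph{exponential} in $\mu$, whereas the whole point of the statement is that the second summand $V^{4V^4}$ is a constant. Your attempted repair via Proposition \ref{proposition:finite_small_ball} does not close this gap, for two reasons. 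First, that proposition concerns finite \emph{subgroups}: the intermediate tuple consists of finite-order elements but need not generate a finite subgroup, so it can only be applied to each cyclic group $\langle a_i^{\gamma(s)}\rangle$ separately, producing conjugating elements that differ with $i$ and $s$ and carry no uniform length bound. Second, and fatally for the excision: if the configurations at $s < s'$ agree only \emph{after} such per-element conjugations, you cannot conclude that $\gamma(s')\gamma(s)^{-1}$ centralises each $a_i$, so deleting the subpath between $s$ and $s'$ need not produce a conjugator at all; the excision requires the tuples themselves to repeat. What is missing is the lemma that constitutes the real content of the cited theorem: for a \emph{shortest} conjugator $x$, once $s$ exceeds roughly $(2k+5)^{4\delta+2}(\mu+2\delta)$ and is correspondingly far from $|x|$, the elements $a_i^{\gamma(s)}$ themselves --- with no auxiliary conjugation --- have length bounded by an absolute constant of order $4\delta+2$. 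That is where finiteness of the orders genuinely enters; it is what makes each coordinate of a configuration range over a bounded ball, hence the constant count $V^{4V^4}$, and it is the source of both summands in the stated bound. Without it, your configuration count is unjustified and the argument does not yield the claimed inequality.
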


Again, the statement in \cite{bridson2005conjugacy} uses $(2k)^{8 \delta}$ in place of $V^4$, but the proof is sufficient to prove the statement here.
Thus by simply checking each element up to the above bound on $|x|_G$, we have an
algorithm \textsc{TestConjugacyExp} that takes as input two lists of $m$ words whose elements have length less than $\mu$ and returns a word $w$ with $A^w =_G B$ if one exists in time exponential in $\mu$.

We shall also need an algorithm \textsc{FindCentraliserGenerators} that can be
used on an arbitrary list of finite order words.
In order to avoid defining the many concepts required while covering no new ground, the reader is referred to \cite{gersten1991rational} for a method of doing so even without the finite order condition:
Lemma 4.2 and Proposition 4.3 of \cite{gersten1991rational} show that the
centraliser $C$ of a finite list in a biautomatic group
(all hyperbolic groups are biautomatic) is a regular language and provide a
method of computing an automaton that accepts this language.
Theorem 2.2 of \cite{gersten1991rational} provides a proof that $C$ is then
quasiconvex and then Proposition 2.3 of \cite{gersten1991rational} provides an
explicit finite generating set for $C$. Each of these steps involves a
potentially exponential blow-up in space and time.
But we shall use \textsc{FindCentraliserGenerators} only with input
of bounded length, so it can be regarded as running in time $O(1)$.

\subsection{Ensuring distinct elements}

To apply Corollary \ref{cor:short_cent} to a list $A = (a_1, \ldots, a_m)$,
all of the elements of $A$ must represent distinct elements of $G$.
We shall eventually apply the corollary to a list of length at most $n=V^4 + 1$
that has been returned by \textsc{ShortenWords}, so it is necessary to ensure
that the words $\{a_i \cdots a_n \mid 1 \le i \le n \}$ represent distinct
group elements.

An algorithm \textsc{EnsureDistinct} will be used for this purpose.
It takes as input two lists of words $A=(a_1, \ldots, a_m)$ and
$B=(b_1, \ldots, b_m)$ and an integer $n \ge 1$.
It returns either \textsc{False} (in which case $A$ and $B$ cannot be conjugate
in $G$) or two lists $A'=(a'_1, \ldots, a'_{m'})$ and
$B'=(b'_1, \ldots, b'_{m'})$ with $m' \le m$, such that
\begin{enumerate}
\item For $g \in G$, $A^g=_G B$ if and only if $A'^g=_G B'$.
\item Let $n' = \min\{m',n\}$. Then the words
$\{ a_i \cdots a_{n'} \mid 1 \le i \le {n'} \}$ represent distinct elements
of $G$, as do the words $\{ b_i \cdots b_{n'} \mid 1 \le i \le {n'} \}$
\end{enumerate}

The algorithm works as follows. We start with $A':=A$, $B':=B$,
and then delete elements from $A'$ and $B'$ until Condition 2 holds,
while maintaining Condition 1.

To do this, consider the words $a'_{ij} := a'_ia'_{i+1}\cdots a'_j$
and $b'_{ij} := b'_ib'_{i+1}\cdots b'_j$ with $1 \le i \le j \le n$. 
Since  $A'^g=_G B'$ implies $a'^g_{ij} =_G  b'_{ij}$, if exactly one of $a'_{ij}$ and
$b'_{ij}$ is equal to the identity in $G$, then $A'$ and $B'$ cannot be
conjugate, so we return \textsc{False}.
If $a'_{ij} =_G 1$ and $b'_{ij} =_G 1$, then we delete $a'_j$ from $A'$ and
$b'_j$ from $B'$, which maintains Condition 1.

We continue to do this until none of the  elements $a'_{ij}$ and $b'_{ij}$ with
$1 \le i \le j \le n$ represent the identity of $G$, which implies that
Condition 2 holds, and we are done.

If $\mu$ is an upper bound on the lengths of the elements in the lists, then
we have to test at most $2mn$ elements of length at most $n\mu$ for being
the identity, so the algorithm runs in time $O(mn^2\mu)$.

\subsection{Solving the conjugacy and centraliser problems}

\label{section:solve_conj}

We can now complete the proofs of Theorems 1 and 2,
by describing the algorithms that solve the conjugacy and centraliser problems
with the required complexity. Since the algorithms are very similar, they will
be described together.

Let $A = (a_1, \ldots, a_m)$ and $B = (b_1, \ldots, b_m)$ be lists of words.
For the centraliser problem, set $B = A$.
For the conjugacy problem, we return either \textsc{False} or
an element of $g$ that conjugates $A$ to $B$.
For the centraliser problem, we return a finite generating set of $C_G(A)$.
Let $\mu$ be the maximum length of the words in $A$ and $B$.

We start by running
\textsc{EnsureDistinct}$(\slex(A), \slex(B), n)$ with $n:= \min(V^4+1,m)$.
If this returns \textsc{False}, then the lists are not conjugate so
return \textsc{False}.
Otherwise, replace $A$ and $B$ by the lists returned by \textsc{EnsureDistinct}.
Since $n$ is bounded, this step takes time $O(m\mu)$.

The two lists $A$ and $B$ now consist of shortlex reduced words,
such that, for $n := \min\{V^4+1, m\}$ (redefining $m$ to be the new length of $A$ and $B$, if necessary), the group elements represented by $a_i \cdots a_n$ are distinct for all $i \le n$.

Let $A'$ and $B'$ be the sublists of $A$ and $B$ respectively containing the
first $n$ elements.
Apply \textsc{ShortenWords} to $A'$ and $B'$; this takes time
$O(n^3\mu) = O(\mu)$.

\textsc{ShortenWords} may return an infinite order element $a_i \cdots a_j$
or $b_i \cdots b_j$ with $1 \le i \le j \le n$.
If not, then we set $j=n$ and run \textsc{TestInfOrder}($a_i \cdots a_n$) and
\textsc{TestInfOrder}($b_i \cdots b_n$) for $1 \le i \le n$, which takes
time $O(\mu)$. In either case if, for some $i,j$ we find one of
$a_i \cdots a_j$ or $b_i \cdots b_j$ has infinite order then we test
whether they both have infinite order and return \textsc{False} if not.

If we have found $i,j$ with $1 \le i \le j \le n$ such that
$a_i \cdots a_j$ and $b_i \cdots b_j$ both have infinite order, then we add
$a_i \cdots a_j$ to the start of $A$ and add $b_i \cdots b_j$ to the start of
$B$. This does not change the set of $g$ with $A^g=_G B$. It may increase
the maximum word length up to $n\mu$, but this remains in $O(\mu)$.
We can now apply the special cases of Theorems 1 and 2 proved in
Section~\ref{section:inf_order}, to complete the algorithms.

We may assume from now on that \textsc{ShortenWords} applied to
$A'$ and $B'$ does not return an infinite order element, and that
$a_i \cdots a_n$ and $b_i \cdots b_n$ have finite order for $1 \le i \le n$.
So \textsc{ShortenWords} returns conjugating elements $c_A$ and $c_B$.
We now (re)define
$A' := (a_1', \ldots, a_n')$ where $a_i' = \slex((a_i \cdots a_n)^{c_A})$
and define $B'$ in the same way using $c_B$.

Note that the total lengths of the elements in $A'$ and $B'$ are now in $O(1)$,
and hence all of our procedures will take time $O(1)$ when applied to $A'$ and
$B'$.

Use \textsc{TestConjugacyExp} to look for a word $u$ with $A'^u =_G B'$.
If no $u$ is found then return \textsc{False}.

Suppose first that $m = n$.
For the conjugacy test, return $c_Auc_B^{-1}$.
For the centraliser computation, let $C$ be the set of generators for
$C_G(A')$ found using \textsc{FindCentraliserGenerators},
and return $\{c_Awuc_B^{-1} : w \in C\}$.

So suppose that $m>n$.
Use \textsc{FindCentraliserExp} to find $C_G(A')$ as a finite set $C$ of
words of length $O(\mu)$. Note that $|C| \in O(1)$ by
Proposition~\ref{proposition:finite_small_ball}.
Check if $A^{c_Awu} = B^{c_B}$ for each word
$w \in C$.  Each check takes time $O(m\mu)$, so this part executes in time
$O(m\mu)$.
For the conjugacy test, return either the first element $c_Awuc_B^{-1}$ for
which this check succeeds, or \textsc{False} if no such element exists.
For the centraliser calculation, return the set of all elements
$c_Awuc_B^{-1}$ for which that this check succeeds.

Since each part of the algorithm takes time $O(m\mu)$,
Theorems 1 and 2 are proved.

\bibliography{bibitems}
\bibliographystyle{plain}

\end{document}